\definecolor{red}{rgb}{1,0,0}
\definecolor{green}{rgb}{0,1,0}
\definecolor{blue}{rgb}{0,0,1}
\definecolor{refkey}{gray}{.625}
\definecolor{labelkey}{gray}{.625}
\DeclareMathOperator{\poly}{poly}
\DeclareMathOperator{\id}{id}
\DeclareMathOperator{\pr}{pr}
\DeclareMathOperator{\pbw}{\operatorname{pbw}}
\DeclareMathOperator{\Der}{Der}
\DeclareMathOperator{\Bott}{Bott}
\DeclareMathOperator{\Td}{Td}
\DeclareMathOperator{\tot}{tot}
 \def\title@font{\normalsize\bfseries}
 \let\ltx@maketitle\@maketitle
 \def\@maketitle{\bgroup%
 \let\ltx@title\@title%
 \def\@\title{\resizebox{\textwidth}{!}{%
  \mbox{\title@font\ltx@title}%
 }}%
 \ltx@maketitle%
 \egroup}
\newcommand{\abs}[1]{\lvert#1\rvert}
\newcommand{\T}{\mathcal{T}}
\newcommand{\Z}{\mathbb{Z}}
\newcommand{\At}{\operatorname{At}}
\theoremstyle{plain}
\newtheorem{Thm}{Theorem}
\newtheorem{definition}{Definition}[section]
\newtheorem{corollary}[definition]{Corollary}
\newtheorem{theorem}[definition]{Theorem}
\newtheorem{lemma}[definition]{Lemma}
\newtheorem{proposition}[definition]{Proposition}
\theoremstyle{remark}
\newtheorem{remark}[definition]{Remark}
\begin{document}
\def\C{\mathbb{C}}
\def\CE{\mathrm{CE}}
\def\D{\mathcal{D}}
\def\Dol{\mathrm{Dol}}
\def\E{\mathcal{E}}
\def\F{\mathcal{F}}
\def\H{\mathbb{H}}
\def\k{\mathbb{K}}
\def\G{\mathcal{G}}
\def\M{\mathcal{M}}
\def\N{\mathcal{N}}
\def\m{\mathfrak{m}}
\def\O{\mathcal{O}}
\def\Q{\mathcal{Q}}
\def\P{\mathcal{P}}
\def\U{\mathcal{U}}
\def\L{\mathcal{L}}
\def\X{\mathcal{X}}
\def\Y{\mathcal{Y}}
\def\ZZ{\mathcal{Z}}
\def\Im{\operatorname{Im}}
\def\tot{\operatorname{tot}}
\def\End{\operatorname{End}}
\def\hkr{\operatorname{hkr}}
\def\Ber{\operatorname{Ber}}
\def\Hom{\operatorname{Hom}}
\def\Bott{\operatorname{Bott}}
\def\sgn{\operatorname{sgn}}
\def\rk{\operatorname{rank}}
\def\sh{\operatorname{sh}}
\newcommand{\OmegaF}{\Omega_F}
\newcommand\dF {d_F}
\newcommand{\ppx}[1]{\frac{\partial~}{\partial x^{#1}}}
\newcommand{\pullbackconn}{\nabla}
\newcommand{\sections}[1]{\Gamma(#1)}
\newcommand{\XX}{\mathfrak{X}}
\newcommand{\LdF}{L_{\dF}}
\newcommand{\TkM}{T_{\k} M}
\newcommand{\bas}{\mathrm{bas}}
\newcommand{\Ext}{\text{Ext}}
\newcommand{\cO}{\mathcal{O}}
\newcommand{\hochschildcohomology}[1]{{H\!H}^{#1}}
\newcommand{\xto}[1]{\xrightarrow{#1}}

\title{Hochschild cohomology of dg manifolds  associated to integrable distributions}

\author{Zhuo Chen}
\address{Department of Mathematics, Tsinghua University, Beijing, China.}
\email{\href{mailto:~~~chenzhuo@tsinghua.edu.cn}{chenzhuo@tsinghua.edu.cn}}

\author{Maosong Xiang}
\address{School of Mathematics and Statistics, Center for Mathematical Sciences, Huazhong University of Science and Technology, Wuhan, China.}
\email{\href{mailto:~~~msxiang@hust.edu.cn}{msxiang@hust.edu.cn}}

\author{Ping Xu}
\address{Department of Mathematics, Pennsylvania State University, State College, PA, USA.}
\email{\href{mailto:~~~ping@math.psu.edu}{ping@math.psu.edu}}
\thanks{Research partially supported by NSFC grant 12071241 (Chen), NSFC grant 11901221 (Xiang), and NSF grants DMS-1707545 and DMS-2001599 (Xu).}

\begin{abstract}
For the field $\k = \mathbb{R}$ or $\mathbb{C}$, and an integrable distribution $F \subseteq T_M \otimes_{\mathbb{R}} \k$ on a smooth manifold $M$, we study the Hochschild cohomology of the dg
manifold $(F[1],d_F)$ and establish a canonical isomorphism with the Hochschild cohomology of the algebra of functions on leaf space in terms of transversal polydifferential operators of $F$. In particular, for the dg manifold $(T_X^{0,1}[1],\bar{\partial})$ associated with a complex manifold $X$, we prove that its Hochschild cohomology is canonically isomorphic to the Hochschild cohomology $\hochschildcohomology{\bullet} (X)$ of the complex manifold $X$. As an application, we show that the Duflo-Kontsevich type theorem for the dg manifold $(T_X^{0,1}[1],\bar{\partial})$ implies the Duflo-Kontsevich theorem for complex manifolds.
\end{abstract}

\maketitle

\tableofcontents

\section{Introduction}

A \emph{dg manifold} is a pair $(\M,Q)$, where $\M$ is a $\Z$-graded manifold, and $Q$ is a homological vector field on $\M$, i.e., a degree $(+1)$ derivation of $C^\infty(\M)$ such that $[Q,Q] = 0$.
Homological vector fields first appeared in physics under the guise of BRST operators used to describe gauge symmetries. Since then, dg manifolds (a.k.a.\ $Q$-manifolds) have appeared frequently
in mathematical physics literature, e.g.,\ in the AKSZ formalism \cites{MR1432574,MR1230027}.
They also arise naturally in many situations in geometry, Lie theory, and mathematical physics.
To any complex manifold $X$ is associated a canonical dg manifold $\big(T^{0,1}_X[1], \bar{\partial}\big)$, where its algebra of functions is $C^\infty (T^{0,1}_X[1]) \cong \Omega^{0,\bullet}(X)$ and the homological vector field $Q$ is the  Dolbeault operator $\bar{\partial}$.
In this paper, by the Hochschild cohomology of a dg manifold $(\M,Q)$, following \cite{Tsygan},
we mean the {\it  direct sum} smooth Hochschild cohomology of the differential graded algebra $\big(C^\infty (\M) , Q\big)$. Alternatively, it can be defined as the cohomology $\H^\bullet\big(\tot(\D_{\poly}(\M)), \llbracket Q, - \rrbracket +d_{\mathscr{H}}\big)$ of the Hochschild cochain complex consisting of the {\it direct sum} polydifferential operators on $(\M, Q)$. See~\cites{CF, CFL,  CaTu, Kaledin, Keller, KL, Lunts, Ramadoss, RWang, Tsygan} and references  therein for  Hochschild cohomology in various situations.
Note that the direct sum  Hochschild cohomology of a differential graded algebra behaves significantly differently from the ordinary Hochschild cohomology, i.e., the direct product Hochschild cohomology \cite{CFL,  CaTu}.

One of the main goals of this paper is to compute the Hochschild cohomology groups of the dg manifold $\big(T^{0,1}_X[1], \bar{\partial}\big)$ by establishing a canonical isomorphism with the Hochschild cohomology groups $\hochschildcohomology{\bullet}(X)$ of the complex manifold $X$, which are defined as the groups $\Ext_{\cO_{X\times X}}^\bullet(\cO_{\Delta}, \cO_{\Delta})$~\cites{Caldararu, Markarian, Yekutieli}. The latter is known to be isomorphic to $\H^\bullet\big(\tot\big(\Omega_X^{0,\bullet}(\mathscr{D}_{\poly}(X))\big), \bar{\partial} + \id \otimes d_{\mathscr{H}}\big)$ \cite{Yekutieli} in terms of the Dolbeault resolution of the complex of sheaves
\[
 0 \rightarrow \O_X \rightarrow \mathscr{D}^1_{\poly}(X)\xrightarrow{d_{\mathscr{H}}} \mathscr{D}^2_{\poly}(X)  \xrightarrow{d_{\mathscr{H}}} \mathscr{D}^3_{\poly}(X) \rightarrow \cdots
\]
of holomorphic polydifferential operators over $X$.
As an application, applying the Duflo-Kontsevich type theorem for dg manifolds~\cites{LSX, SXsurvey}
to this particular dg manifold  $\big( T^{0,1}_X[1], \bar{\partial}\big)$, we recover the
well-known Duflo-Kontsevich theorem for complex manifolds~\cites{Kon, CV}.

For a given complex manifold $X$, $T_X^{0,1}\subset T_\C X := T_X \otimes \mathbb{C}$ is an integrable distribution. In this paper, we put this situation into a general framework by  considering general integrable distributions over the field $\k = \mathbb{R}$ or $\mathbb{C}$. In this way, we can include the case of dg manifolds associated to foliations, which should be of independent interest.
By an integrable distribution,  we mean a subbundle $F \subseteq {\TkM} = T_M \otimes_{\mathbb{R}} \k$, such that $\Gamma(F)$ is closed under the commutator of vector fields. When $\k = \mathbb{R}$, an integrable distribution $F$ is the tangent bundle of a regular foliation  on $M$ according to the Frobenius theorem. Meanwhile, each complex manifold $X$ determines an integrable distribution $F:= T_X^{0,1} \subset T_{\mathbb{C}} X$.
An integrable distribution $F \subseteq {\TkM}$ produces a finite dimensional dg manifold --- the leafwise de Rham differential, i.e., the Chevalley-Eilenberg differential of the Lie algebroid $F$, gives rise to a homological vector field $\dF$ on the graded manifold $F[1]$,  hence a dg manifold $(F[1], d_F)$.
For an integrable distribution $F$, the role of  holomorphic differential operators on a complex manifold $X$ is played by $F$-flat transversal differential operators $\D(T_\k M/F) := \frac{\D(M)}{\D(M)\Gamma(F)}$, and the role of Hochschild cohomology $\hochschildcohomology{\bullet}(X) \cong  \H^\bullet\big(\tot\big( \Omega_X^{0,\bullet} (\mathscr{D}_{\poly}(X)) \big), \bar{\partial} + \id \otimes d_{\mathscr{H}}\big)$ is played by the hypercohomology of
\[
\Big(\tot\big(\OmegaF\big(\D_{\poly}(T_\k M/F)\big)\big), d_F^\U + \id \otimes d_{\mathscr{H}} \Big),
\]
which can be thought of as the Hochschild cohomology of the algebra of functions on the leaf space of the foliation (in the case $\k = \mathbb{R}$). Here $\D_{\poly}(T_\k M/F)$ is the space of transversal polydifferential operators. See Section~\ref{Secpolyofpair} for details.
Our main result is to prove that there is a canonical isomorphism between
 $\H^\bullet\big(\tot(\D_{\poly}(F[1])), \llbracket d_F, - \rrbracket + d_{\mathscr{H}}\big)$, the Hochschild cohomology of the dg manifold $(F[1], d_F)$, and $\H^\bullet_{\CE} \big(F, (\D_{\poly}(T_\k M/F), d_{\mathscr{H}})\big)$.
To achieve this goal, we establish a homotopy contraction.
\begin{Thm}[Theorem~\ref{prop: contraction on Dpoly}]\label{Thm b}
Let $F \subseteq {\TkM}$ be an integrable distribution.
There is a contraction of dg $\Omega_F$-modules
        \begin{equation}\label{Eq:ContractDoplyF1}
        \begin{tikzcd}
       \Big (\tot\big(\D_{\poly}({F[1]})\big), \llbracket d_F, - \rrbracket + d_\mathscr{H}\Big ) \arrow[loop left, distance=2em, start anchor={[yshift=-1ex]west}, end anchor={[yshift=1ex]west}]{}{\breve{H}_\natural} \arrow[r,yshift = 0.7ex, "\Phi_\natural"] & \Big(\tot\big(\OmegaF(\D_{\poly}(T_\k M/F))\big) , \dF ^{\mathcal{U}} + \id \otimes d_\mathscr{H}\Big) \arrow[l,yshift = -0.7ex, "\breve{\Psi}_\natural"].
        \end{tikzcd}
        \end{equation}
\end{Thm}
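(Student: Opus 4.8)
The plan is to assemble the contraction out of a single geometric ``degree-one'' contraction and then promote it, first to the polydifferential (bar) level and finally to the full twisted differential, by homological perturbation. \emph{First}, I would fix an auxiliary splitting $\TkM = F \oplus B$ of the integrable distribution (equivalently, a torsion-free connection adapted to $F$), so that the normal bundle $\TkM/F \cong B$ acquires its canonical Bott $F$-connection. This splitting induces a PBW-type identification under which a differential operator on the graded manifold $F[1]$ decomposes into a \emph{transversal} part (tangent to $B$) and an \emph{$F$-vertical} part, the latter combining derivations along $F\subseteq M$ with the odd fibre derivations. On this single-operator level, the vertical directions paired with the Chevalley--Eilenberg/Koszul differential form an acyclic complex whose cohomology is exactly $\Omega_F$, while the transversal symbol survives as $\D(\TkM/F)$. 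This yields an explicit $\Omega_F$-linear contraction of $\D(F[1])$ onto $\Omega_F(\D(\TkM/F))$, with projection, inclusion and homotopy $\Phi$, $\Psi$, $H$.

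\emph{Second}, I would extend this degree-one contraction to the tensor powers, i.e.\ to $\D_{\poly}(F[1])$, producing the ``$\natural$'' data $\Phi_\natural$, $\Psi_\natural$, $H_\natural$. This is the tensor-trick: a contraction of a module lifts to a contraction of its tensor coalgebra, and after assembling the induced maps with the correct Koszul signs one obtains a contraction compatible with the Hochschild differential, so that $d_{\mathscr{H}}$ on the source is carried to $\id \otimes d_{\mathscr{H}}$ on the transversal side. The points requiring care here are the side conditions $\Phi_\natural \Psi_\natural = \id$, $\Psi_\natural \Phi_\natural - \id = d_{\mathscr{H}} H_\natural + H_\natural d_{\mathscr{H}}$, and $H_\natural^2 = \Phi_\natural H_\natural = H_\natural \Psi_\natural = 0$, together with the preservation of $\Omega_F$-linearity at every stage.

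\emph{Third}, I would incorporate the homological vector field by treating the remaining (transversal, Bott-connection) part of $\llbracket d_F, - \rrbracket$ as a perturbation of the differential on $\D_{\poly}(F[1])$ and invoking the basic perturbation lemma. The crucial convergence input is that $\llbracket d_F, - \rrbracket$ \emph{preserves Hochschild arity} (it is the Gerstenhaber bracket with a $1$-cochain) while raising the Chevalley--Eilenberg degree, which is bounded by $\rk F$; hence the composites $H_\natural \circ \llbracket d_F, - \rrbracket$ are nilpotent and the perturbation series is a finite sum. The lemma then delivers the corrected maps $\breve{\Psi}_\natural$ and homotopy $\breve{H}_\natural$, leaves $\Phi_\natural$ unchanged, and identifies the transferred differential on the retract as precisely $d_F^{\U} + \id \otimes d_{\mathscr{H}}$, the $d_F^{\U}$ summand being the Chevalley--Eilenberg differential assembled from the Bott connection.

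\emph{The main obstacle} I anticipate lies in the third step's bookkeeping, made global: one must check that the mixed terms generated by $H_\natural$ and $\llbracket d_F, - \rrbracket$ reorganize \emph{exactly} into the Bott-connection part of $d_F^{\U}$, rather than some merely quasi-isomorphic differential, and that the resulting package $(\Phi_\natural, \breve{\Psi}_\natural, \breve{H}_\natural)$ is genuinely independent of the auxiliary splitting and $\Omega_F$-linear. Verifying that the side conditions survive the perturbation --- so that what the lemma produces is again an honest contraction of dg $\Omega_F$-modules --- is where the real work concentrates; by contrast, the degree-one contraction of the first step and the tensor-trick of the second are essentially formal once the PBW identification is in place.
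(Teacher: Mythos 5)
Your overall architecture --- an arity-one contraction built from a splitting and a PBW-type identification, then the tensor trick, then the perturbation lemma --- is the same as the paper's, but there is a genuine gap where you assert that the tensor trick ``with the correct Koszul signs'' already yields a contraction compatible with the Hochschild differential. The operator $d_{\mathscr{H}}$ is \emph{not} the tensor extension of anything at arity one: it is built from the coproduct $\Delta$ and insertion of units, and the tensor trick only transports the arity-one differential. In the paper $d_{\mathscr{H}}$ is a second, separate perturbation, and what makes it work is (i) that the arity-one projection $\Phi_\natural$ is a morphism of $\Omega_F$-coalgebras (it has the intrinsic description $D\mapsto\overline{\pi_\ast D}$), whence $\Phi_\natural d_{\mathscr{H}}=(\id\otimes d_{\mathscr{H}})\Phi_\natural$; this is what forces the perturbed projection to remain $\Phi_\natural$ and the transferred differential to be \emph{exactly} $\id\otimes d_{\mathscr{H}}$ rather than something merely quasi-isomorphic to it; and (ii) a convergence estimate from the order filtration on differential operators: $(H_\natural d_{\mathscr{H}})^n$ kills operators of order $\le p$ once $n>p$, since the coproduct cannot raise total order while the homotopy annihilates order-zero factors. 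Since neither $\Psi_\natural$ nor $H_\natural$ is a coalgebra morphism, none of this is automatic; the ``points requiring care'' you defer are the entire content of this step, and the decisive input --- the coalgebra property of the projection --- is missing from your sketch.

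Your third step also rests on a false premise. The full operator $\llbracket d_F,-\rrbracket$ does \emph{not} purely raise the Chevalley--Eilenberg form degree: it contains a Koszul component (essentially $\llbracket d_F,\iota_a\rrbracket=L_a$) that converts vertical odd derivations into horizontal ones at fixed $\OmegaF$-degree, so nilpotency of $\breve{H}_\natural\circ\llbracket d_F,-\rrbracket$ cannot be deduced from the bound $\rk F$ on the form degree. The paper performs the $d_F$-perturbation \emph{first}, at arity one, and controls it by the order filtration on $\D(F[1])$: the perturbation $\Theta=\llbracket d_F,-\rrbracket-\pbw\circ L_{d_F}\circ\pbw^{-1}$ lowers the order by one and the filtration is bounded below; only afterwards does it apply the tensor trick and perturb by $d_{\mathscr{H}}$. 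Your reversed order might be salvageable, but you would have to isolate the genuinely degree-raising part of the perturbation via the basic connection and basic curvature, and check that the already-perturbed homotopy $\breve{H}_\natural=\sum_n H_\natural(d_{\mathscr{H}}H_\natural)^n$ does not lower the relevant degree --- none of which is addressed. Finally, independence of the auxiliary splitting holds only for $\Phi_\natural$; the inclusion and homotopy do depend on the choices, which is all the statement requires.
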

The construction of such a contraction is highly nontrivial. To do so, following Vitagliano~\cite{Luca}, we first establish a contraction from the left $\Omega_F$-module $\D({F[1]})$ of differential operators on  $(F[1],d_F)$ onto the space $\Omega_F(\D(T_\k M/F))$ of transversal differential operators of $F$ (Theorem~\ref{thm: Luca's result}).
Applying the tensor trick (cf.~\cites{Manetti, Manettibook}) to this contraction and using the perturbation lemma, we obtain  the desired contraction \eqref{Eq:ContractDoplyF1}.
Although the construction of the contraction involves of choices of certain geometric data such as connections and splittings,  the projection $\Phi_\natural$ is independent of those choices and is
canonical. Therefore, the induced isomorphism on the level of cohomology groups is indeed canonical:
\begin{equation}
\label{eq:Phi}
\Phi_\natural \colon \H^\bullet \big(\tot \big(\D_{\poly}(F[1])\big), \llbracket d_F, - \rrbracket + d_{\mathscr{H}}\big) \xto{\cong}
\H^\bullet_{\CE}\big(F, \big(\D_{\poly}(T_\k M/F),d_{\mathscr{H}}\big)\big).
\end{equation}
For polyvector fields,  it was already proved in~\cites{BCSX, CXX} that there exists an isomorphism of Gerstenhaber algebras
\begin{equation}\label{Eqt:isoHTpolyFoneTpolyB}
        \Phi \colon \H^\bullet\big(\tot\big(\mathcal{T}_{\poly}(F[1])\big), L_{d_F}\big) \xrightarrow{\cong}
        \H_{\CE}^\bullet\big(F, \mathcal{T}_{\poly}(T_\k M/F)\big).
\end{equation}
Here $\H^\bullet\big(\tot(\mathcal{T}_{\poly}(F[1])), L_{d_F}\big)$ denotes the hypercohomology of polyvector fields  of the dg manifold $(F[1], d_F)$, while $\H_{\CE}^\bullet\big(F, \mathcal{T}_{\poly}(T_\k M/F)\big)$ denotes the hypercohomology of
\[
\big(\tot(\OmegaF (\T_{\poly}(T_\k M/F))),d_F\big),
\]
which, in the case $\k = \mathbb{R}$,  can be thought of as the leafwise de Rham cohomology with coefficients in transversal polyvector fields of the foliation.

The two isomorphisms $\Phi_\natural$ and $\Phi$ are in fact compatible in the following way.
\begin{Thm}[Theorem~\ref{Thm: KD for integrable distributions}]\label{Main thm}
        We have the following commutative diagram:
        \[
        \begin{tikzcd}
                \H^\bullet\Big(\tot\big(\mathcal{T}_{\poly}(F[1])\big), L_{\dF}\Big) \ar{d}[left]{\Phi}[right]{\cong}
                \ar{rrr}{~\hkr \circ \Td^{1/2}_{(F[1],d_F)}~} &&& \H^\bullet\Big(\tot \big(\D_{\poly}(F[1])\big), \llbracket d_F, - \rrbracket + d_{\mathscr{H}}\Big) \ar{d}[right]{\Phi_\natural}[left]{\cong} \\
                \H^\bullet_{\CE}\Big(F,\mathcal{T}_{\poly}(T_\k M/F)\Big)
                \ar{rrr}{~\hkr \circ \Td^{1/2}_{{\TkM}/F}~} &&& \H^\bullet_{\CE}\Big(F, \big(\D_{\poly}(T_\k M/F),d_{\mathscr{H}}\big)\Big).
        \end{tikzcd}
        \]
\end{Thm}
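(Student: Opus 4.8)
The plan is to transport the Kontsevich--Duflo isomorphism along the canonical identifications $\Phi$ and $\Phi_\natural$ by factoring each horizontal arrow as a contraction against the square root of the relevant Todd class followed by the Hochschild--Kostant--Rosenberg quasi-isomorphism, and then to verify the two resulting squares separately. Writing $\iota_{\Td^{1/2}}$ for the contraction operator, the top arrow is $\hkr_{(F[1],d_F)} \circ \iota_{\Td^{1/2}_{(F[1],d_F)}}$ and the bottom arrow is $\hkr_{{\TkM}/F} \circ \iota_{\Td^{1/2}_{{\TkM}/F}}$. The top arrow is an isomorphism by the Kontsevich--Duflo theorem for the dg manifold $(F[1],d_F)$ (cf.\ \cites{LSX, SXsurvey}), and the content of the statement is the commutativity of the square; combined with the vertical isomorphisms $\Phi$ and $\Phi_\natural$, commutativity then also shows that the bottom arrow is an isomorphism. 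On cohomology the commutativity reduces to two assertions: \emph{(a)} $\Phi_\natural \circ \hkr_{(F[1],d_F)} = \hkr_{{\TkM}/F} \circ \Phi$, and \emph{(b)} $\Phi \circ \iota_{\Td^{1/2}_{(F[1],d_F)}} = \iota_{\Td^{1/2}_{{\TkM}/F}} \circ \Phi$. Concatenating them gives
\[
\Phi_\natural \circ \hkr_{(F[1],d_F)} \circ \iota_{\Td^{1/2}_{(F[1],d_F)}} = \hkr_{{\TkM}/F} \circ \iota_{\Td^{1/2}_{{\TkM}/F}} \circ \Phi ,
\]
which is exactly the claim. (Here one uses that $\Td^{1/2}$ is represented by a $d_F$-closed element, so that $\iota_{\Td^{1/2}}$ is a chain map and descends to cohomology.)

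To prove (a) I would work with the explicit contractions underlying the two vertical maps. Both $\Phi$ and $\Phi_\natural$ are canonical, so I may compute them from a common choice of geometric data --- a connection on $M$ together with a splitting of the short exact sequence $0 \to F \to {\TkM} \to {\TkM}/F \to 0$ --- out of which the contraction underlying $\Phi$ (\cites{BCSX, CXX}, see \eqref{Eqt:isoHTpolyFoneTpolyB}) and the contraction of Theorem~\ref{Thm b} underlying $\Phi_\natural$ (see \eqref{eq:Phi}) are assembled. The Hochschild--Kostant--Rosenberg maps on the dg-manifold side and on the transversal side are both graded antisymmetrizations of polyvector fields into polydifferential operators, and are natural with respect to this data. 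Unwinding the definitions then shows that $\hkr_{{\TkM}/F} \circ \Phi$ and $\Phi_\natural \circ \hkr_{(F[1],d_F)}$ agree up to homotopy; since only the induced maps on cohomology are needed, this proves (a).

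For (b), note first that $\Phi$, being an isomorphism of Gerstenhaber algebras \eqref{Eqt:isoHTpolyFoneTpolyB}, is compatible with the contraction operations through which the square-root Todd classes act on polyvector fields; hence (b) is equivalent to the single identity $\Phi\big(\Td^{1/2}_{(F[1],d_F)}\big) = \Td^{1/2}_{{\TkM}/F}$. Since $\Td^{1/2}$ is a universal power series in the Atiyah class $\At$ --- the square root of $\det\big(\At/(1-e^{-\At})\big)$ --- this reduces further to the matching of Atiyah classes
\[
\Phi\big(\At_{(F[1],d_F)}\big) = \At_{{\TkM}/F} ,
\]
where $\At_{{\TkM}/F}$ denotes the Atiyah class of the normal bundle ${\TkM}/F$ equipped with its Bott connection.

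The matching of Atiyah classes is the main obstacle. To establish it I would represent $\At_{(F[1],d_F)}$ by the curvature of a $d_F$-compatible connection on the tangent bundle of the graded manifold $F[1]$, re-express this representative through the Chevalley--Eilenberg data of the Lie algebroid $F$, and compare it with the curvature of the Bott connection on ${\TkM}/F$; the comparison is controlled by the sequence $0 \to F \to {\TkM} \to {\TkM}/F \to 0$ and, under $\Phi$, should reproduce $\At_{{\TkM}/F}$. Once the Atiyah classes are matched, the universal Todd power series yields (b), and combining (a) and (b) completes the proof.
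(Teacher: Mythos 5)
Your proposal is correct and follows essentially the same route as the paper: the paper likewise splits the diagram into an $\hkr$-square (commuting because $\Phi_\natural$ restricted to $\D^{\leq 1}(F[1])$ equals $\Phi$ and both $\hkr$ maps are skew-symmetrizations) and a Todd-contraction square (commuting because $\Phi(\Td_{(F[1],d_F)})=\Td_{{\TkM}/F}$, which the paper quotes from \cite{CXX} as Proposition~\ref{prop: Atiyah and Todd classes} rather than re-proving the matching of Atiyah classes). The only difference is that you propose to re-derive the Atiyah-class comparison, whereas the paper cites it.
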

Here $\Td_{(F[1],d_F)}$ and $\Td_{{\TkM}/F}$ are the Todd classes of the dg manifold $(F[1],d_F)$ and the Lie pair $(\TkM, F)$, respectively, which act by contractions. By abuse of notation, $\hkr$ stands for the  Hochschild-Kostant-Rosenberg maps for both the dg manifold $(F[1],d_F)$ and  the Lie pair $(\TkM, F)$.

The map $\Phi_\natural$ intertwines the associative products on $\tot\left(\D_{\poly}({F[1]})\right)$ and $\tot\left(\OmegaF ( \D_{\poly}(T_\k M/F))\right)$ (see Theorem~\ref{prop: contraction on Dpoly}).
When $F$ is \emph{perfect}, that is, if there exists another integrable
 distribution which is transversal to $F$,  we further prove that $\Phi_\natural$ in \eqref{eq:Phi}
is an isomorphism of Gerstenhaber algebras (see Theorem~\ref{prop: Isom of G-algebra of Dpoly for perfect ID}).

As an application of Theorem~\ref{Main thm}, we consider complex manifolds.
For a complex manifold $X$, $F = T_X^{0,1} \subset T_\C X$ is a perfect integrable distribution, since $T_\C X  = T_X^{1,0} \bowtie T_X^{0,1}$ is a matched pair of Lie algebroids, that is,
$T_X^{1,0}$ is an integrable distribution transversal to $T_X^{0,1}$.
Thus the quotient bundle $T_\C X/T_X^{0,1}$ is naturally identified with $ T^{1,0}_X$.

Based on the discussions above and by applying Theorem~\ref{Main thm}, we establish the following
\begin{Thm}[Theorem~\ref{Thm: Complex manifolds}]\label{Thm C}
  Let $(T_X^{0,1}[1],\bar{\partial})$ be the dg manifold arising from a complex manifold $X$. Then we have the following commutative diagram
  \[
   \begin{tikzcd}
   \H^\bullet\Big(\tot\big(\mathcal{T}_{\poly}(T_X^{0,1}[1])\big), L_{\bar{\partial}} \Big)   \ar{d}[left]{\Phi}[right]{\cong}
  \ar{rrr}{\hkr \circ \Td^{1/2}_{ (T_X^{0,1}[1],\bar{\partial})}} &&& \H^\bullet\Big(\tot\big(\D_{\poly}(T_X^{0,1}[1])\big), L_{\bar{\partial}}  + d_{\mathscr{H}}\Big) \ar{d}[right]{\Phi_\natural}[left]{\cong} \\
   \H^\bullet \big(X, \mathcal{T}_{\poly}(X)\big) \ar{rrr}{\hkr \circ \Td^{1/2}_{T_\C X/T_X^{0,1}}} &&& \hochschildcohomology{\bullet}(X),
   \end{tikzcd}
  \]
where both $\Phi$ and $\Phi_{\natural}$ are isomorphisms of Gerstenhaber algebras.
\end{Thm}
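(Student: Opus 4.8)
The plan is to deduce Theorem~\ref{Thm: Complex manifolds} from the general Theorem~\ref{Thm: KD for integrable distributions} by specializing to the perfect integrable distribution $F = T_X^{0,1} \subseteq T_\C X$ and then translating the two bottom corners and the bottom horizontal arrow of the resulting commutative square into their classical complex-geometric incarnations. First I would apply Theorem~\ref{Thm: KD for integrable distributions} verbatim to $F = T_X^{0,1}$. This already produces a commutative square whose vertical arrows $\Phi$ and $\Phi_\natural$ are isomorphisms and whose horizontal arrows are the $\hkr\circ\Td^{1/2}$ maps; moreover the top row and the top two corners are literally those appearing in Theorem~\ref{Thm: Complex manifolds}, since $d_F = \bar{\partial}$ in this case. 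So the entire content of Theorem~\ref{Thm: Complex manifolds} reduces to three tasks: (i) identifying the bottom-left and bottom-right corners with $\H^\bullet(X, \mathcal{T}_{\poly}(X))$ and $HH^\bullet(X)$; (ii) checking that under these identifications the bottom Todd/HKR arrow becomes the classical one; and (iii) upgrading $\Phi$ and $\Phi_\natural$ to isomorphisms of Gerstenhaber algebras.

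For step (i) the key observation is that for $F = T_X^{0,1}$ the leafwise Chevalley--Eilenberg differential $\dF$ is exactly the Dolbeault operator $\bar{\partial}$ and $\OmegaF = \Omega_X^{0,\bullet}$, while the quotient bundle $T_\C X / T_X^{0,1}$ is canonically $T_X^{1,0}$. Hence the transversal polyvector fields $\mathcal{T}_{\poly}(T_\C X/F)$ are the holomorphic polyvector fields, and the complex computing $\H^\bullet_{\CE}(F, \mathcal{T}_{\poly}(T_\C X/F))$ is precisely the Dolbeault resolution of the sheaf of holomorphic polyvector fields, whose hypercohomology is $\H^\bullet(X, \mathcal{T}_{\poly}(X))$. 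Likewise, the transversal polydifferential operators $\D_{\poly}(T_\C X/F)$ together with $d_{\mathscr{H}}$ form the complex of holomorphic polydifferential operators $\mathscr{D}_{\poly}(X)$, so $\H^\bullet_{\CE}(F, (\D_{\poly}(T_\C X/F), d_{\mathscr{H}}))$ is the hypercohomology $\H^\bullet(\tot(\Omega_X^{0,\bullet}(\mathscr{D}_{\poly}(X))), \bar{\partial} + \id \otimes d_{\mathscr{H}})$, which by Yekutieli's theorem~\cite{Yekutieli} equals $\Ext^\bullet_{\cO_{X\times X}}(\cO_{\Delta},\cO_{\Delta}) = HH^\bullet(X)$. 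I would take care to record that these identifications are compatible with the cup products, so that they are isomorphisms of Gerstenhaber (resp.\ graded commutative) algebras, not merely of graded vector spaces.

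For step (ii) I would verify that the Todd class $\Td_{T_\C X/T_X^{0,1}}$ of the Lie pair $(T_\C X, T_X^{0,1})$ coincides, under $T_\C X/T_X^{0,1} \cong T_X^{1,0}$, with the usual Todd class of the complex manifold $X$; this follows from the fact that the Atiyah class of the Lie pair $(T_\C X, T_X^{0,1})$ is represented by the Dolbeault representative of the Atiyah class of $T_X^{1,0}$. Consequently the bottom arrow $\hkr\circ\Td^{1/2}_{T_\C X/T_X^{0,1}}$ is the classical HKR--Todd map, and the square of Theorem~\ref{Thm: KD for integrable distributions} becomes the asserted one. Finally, for step (iii), since $T_\C X = T_X^{1,0}\bowtie T_X^{0,1}$ exhibits $F = T_X^{0,1}$ as a perfect integrable distribution, Theorem~\ref{prop: Isom of G-algebra of Dpoly for perfect ID} guarantees that $\Phi_\natural$ is an isomorphism of Gerstenhaber algebras, while $\Phi$ is one by \eqref{Eqt:isoHTpolyFoneTpolyB}. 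The main obstacle is the bookkeeping in steps (i) and (ii): proving that the transversal polydifferential complex really is Yekutieli's Dolbeault model of $HH^\bullet(X)$ as a \emph{multiplicative} object, and that the dg-manifold Todd class pushes forward to the complex-manifold Todd class. Once these compatibilities are pinned down, the theorem follows by assembling the commutative square of Theorem~\ref{Thm: KD for integrable distributions} with the four corner identifications.
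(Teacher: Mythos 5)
Your proposal is correct and follows essentially the same route as the paper: specialize Theorem~\ref{Thm: KD for integrable distributions} to the perfect integrable distribution $F = T_X^{0,1}\subset T_\C X$, identify the bottom corners via the Dolbeault resolutions of holomorphic polyvector fields and of Yekutieli's complex of holomorphic polydifferential operators, and invoke \eqref{Eqt:isoHTpolyFoneTpolyB} together with Theorem~\ref{prop: Isom of G-algebra of Dpoly for perfect ID} for the Gerstenhaber statements. One caution on your step (ii): the theorem as stated already uses the Lie-pair Todd class $\Td_{T_\C X/T_X^{0,1}}$ as the bottom arrow, so no comparison with ``the usual Todd class of $X$'' is needed, and the paper only asserts that the two coincide when $X$ is compact K\"ahler or algebraic and proper --- so you should drop that claim rather than rely on it for a general complex manifold.
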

As an immediate  consequence, applying the Duflo-Kontsevich type theorem for the dg manifold
 $(T_X^{0,1}[1], \bar{\partial})$ \cites{LSX, SXsurvey},
 we recover Duflo-Kontsevich theorem for complex manifolds, a theorem first
 proved by Kontsevich (for associative algebras only) in~\cite{Kon}, Calaque and Van den Bergh in~\cite{CV} and recovered by Liao, Sti\'{e}non and Xu using Lie pairs in~\cite{LSXpair}.

\begin{Thm}[Theorem~\ref{main corollary}]\label{Thm D}
  For every complex manifold $X$, the composition
  \[
    \hkr \circ \Td^{1/2}_{T_\C X/T_X^{0,1}} \colon
 \H^\bullet\big(X, \mathcal{T}_{\poly}(X) \big) \xto{\cong} \hochschildcohomology{\bullet}(X)
  \]
is an isomorphism of Gerstenhaber algebras.
\end{Thm}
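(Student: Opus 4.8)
The plan is to read off Theorem~\ref{Thm D} from the commutative square of Theorem~\ref{Thm C}, using the Kontsevich--Duflo type theorem for dg manifolds as the only genuinely hard input. Concretely, Theorem~\ref{Thm C} furnishes a commutative diagram whose two vertical arrows $\Phi$ and $\Phi_\natural$ are \emph{isomorphisms of Gerstenhaber algebras}, whose bottom horizontal arrow is exactly the map
\[
\hkr \circ \Td^{1/2}_{T_\C X/T_X^{0,1}} \colon \H^\bullet(X, \mathcal{T}_{\poly}(X)) \longrightarrow HH^\bullet(X)
\]
appearing in the statement, and whose top horizontal arrow is the corresponding HKR--Todd map
\[
\hkr \circ \Td^{1/2}_{(T_X^{0,1}[1],\bar{\partial})} \colon \H^\bullet(\tot_\oplus(\mathcal{T}_{\poly}(T_X^{0,1}[1])), L_{\bar{\partial}}) \longrightarrow \H^\bullet(\tot_\oplus(\D_{\poly}(T_X^{0,1}[1])), L_{\bar{\partial}} + d_{\mathscr{H}})
\]
for the dg manifold $(T_X^{0,1}[1], \bar{\partial})$.

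First I would invoke the Kontsevich--Duflo type theorem for dg manifolds \cites{LSX, SXsurvey}, applied to the particular dg manifold $(T_X^{0,1}[1], \bar{\partial})$: it asserts precisely that this top horizontal arrow is an isomorphism of Gerstenhaber algebras, intertwining the Schouten--Nijenhuis bracket and the wedge product on the polyvector-field side with the Gerstenhaber bracket and the cup product on the polydifferential-operator side.

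With these three isomorphisms in hand, the conclusion is a formal diagram chase. By commutativity of the square in Theorem~\ref{Thm C},
\[
\hkr \circ \Td^{1/2}_{T_\C X/T_X^{0,1}} = \Phi_\natural \circ \big(\hkr \circ \Td^{1/2}_{(T_X^{0,1}[1],\bar{\partial})}\big) \circ \Phi^{-1},
\]
exhibiting the map of Theorem~\ref{Thm D} as a composite of three isomorphisms of Gerstenhaber algebras; hence it too is an isomorphism of Gerstenhaber algebras. In particular it is bijective and respects both the graded-commutative products and the Gerstenhaber brackets on $\H^\bullet(X, \mathcal{T}_{\poly}(X))$ and $HH^\bullet(X)$.

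I expect the only subtle point to be bookkeeping rather than mathematics: one must check that the dg-manifold Kontsevich--Duflo theorem quoted from \cites{LSX, SXsurvey} is stated with the same normalization of the square-root Todd class $\Td^{1/2}_{(T_X^{0,1}[1],\bar{\partial})}$ and of the HKR map as in the top row of Theorem~\ref{Thm C}, so that the two horizontal arrows in the diagram literally agree. Since Theorem~\ref{Thm C} already incorporates these Todd-class contractions and establishes that $\Phi$ and $\Phi_\natural$ are compatible with the \emph{full} Gerstenhaber structures (not merely that they are linear isomorphisms), this compatibility is exactly what makes the chase go through, and no further analytic input is required. All the substantive work has already been carried out in Theorem~\ref{Thm b} (the contraction) and Theorem~\ref{Thm C}; Theorem~\ref{Thm D} is their immediate corollary.
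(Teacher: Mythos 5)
Your proposal is correct and follows essentially the same route as the paper: the paper's proof likewise invokes the Kontsevich--Duflo type theorem for the dg manifold $(T_X^{0,1}[1],\bar{\partial})$ (Theorem~\ref{prop: KD for dg manifold}) for the top arrow, establishes that $\Phi$ and $\Phi_\natural$ are Gerstenhaber isomorphisms (via Proposition~\ref{prop: isom on Tpoly} and Theorem~\ref{prop: Isom of G-algebra of Dpoly for perfect ID}, applied to the perfect integrable distribution $T_X^{0,1}\subset T_\C X$ --- the content packaged into the statement of Theorem~\ref{Thm C}), and concludes by the same diagram chase. The only cosmetic difference is that you cite Theorem~\ref{Thm C} wholesale for the Gerstenhaber property of the vertical arrows, whereas the paper unwinds that claim to its two underlying sources.
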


Finally, we would like to point out that without the perfect assumption on $F$, the hypercohomology $\H_{\CE}^\bullet\big(F, $ $(\D_{\poly}(T_\k M/F), d_{\mathscr{H}})\big)$ still carries a canonical Gerstenhaber algebra structure---  a result due to
Bandiera, Sti\'{e}non and Xu ~\cite{BSX}.  We expect that $\Phi_\natural$  in \eqref{eq:Phi} is still
an isomorphism of Gerstenhaber algebras. However, the construction of the Gerstenhaber bracket in~\cite{BSX} is not explicit (see Remark~\ref{Main remark}).
Our method here cannot be applied  directly  to prove that  $\Phi_\natural$  in \eqref{eq:Phi} respects the Gerstenhaber brackets.
We wish to return to this question in the future.

\paragraph{\textbf{Acknowledgement}}
We would like to thank Ruggero Bandiera, Hsuan-Yi Liao, Seokbong Seol, Mathieu Sti\'enon,
 Luca Vitagliano and Zhengfang Wang for fruitful discussions and useful comments.
We are also grateful to the anonymous referee for constructive suggestions to improve the presentation of the manuscript.

\section{Algebraic structures of differential operators on the dg manifold $(F[1],d_F)$}\label{Sec: Diff operators}
Let $F \subseteq {\TkM}$ be an integrable distribution and denote by $R$ the algebra of $\k$-valued smooth functions $C^\infty(M,\k)$ on $M$. Consider the graded manifold $F[1]$ whose algebra of smooth functions $C^\infty({F[1]},\k)$ is identified with $\OmegaF:= \Gamma(\wedge F^\vee)$. The leafwise de Rham differential, i.e., the Chevalley-Eilenberg differential $d_F \colon \Omega_F^\bullet \to \Omega_F^{\bullet+1}$ of the Lie algebroid $F$, can be viewed as a homological vector field on the graded manifold $F[1]$. Thus we obtain a dg manifold $(F[1], d_F)$.
For the Chevalley-Eilenberg complex of a Lie algebroid, see \cite{Macbook}*{Section 7.1}.

\subsection{Two dg coalgebras of differential operators}
To any integrable distribution $F$ are associated two dg coalgebras. The first one is the space $\D(F[1])$ of differential operators on the dg manifold $(F[1], d_F)$. Note that the homological vector field $d_F$ belongs to $\Gamma(T_{F[1]}) \subset \D(F[1])$, thus induces a degree $(+1)$ differential:
\[
\llbracket d_F, - \rrbracket \colon \D(F[1]) \to \D(F[1])[1].
\]
Here $\llbracket -,- \rrbracket$ denotes the graded commutator on $\D(F[1])$. The differential $\llbracket d_F,- \rrbracket$ preserves the natural increasing filtration on $\D(F[1])$ by the order of differential operators
\begin{equation}\label{Eq: filtration on DF[1]}
\Omega_F \cong \D^{\leq 0}(F[1]) \subset \D^{\leq 1}(F[1]) \cong \Omega_F \oplus \Gamma(T_{F[1]}) \subset \cdots \subset \D^{\leq k}(F[1]) \subset \D^{\leq k+1}(F[1]) \subset \cdots.
\end{equation}
Moreover, $\D(F[1])$ admits a natural $\Omega_F$-coproduct
\begin{equation}\label{Eqt:DeltaDFone}
\Delta \colon \D(F[1]) \to \D(F[1]) \otimes_{\Omega_F} \D(F[1])
\end{equation}
such that
\[
\Delta(D) (\xi \otimes \eta) = D(\xi \wedge \eta),
\]
for any $D \in \D(F[1])$ and any $\xi, \eta \in \Omega_F$. It can be verified directly that the differential $\llbracket d_F,- \rrbracket$ is a coderivation with respect to this coproduct $\Delta$. Thus, the triple $(\D(F[1]), \llbracket d_F,- \rrbracket, \Delta)$ forms a filtered dg coalgebra over the dg algebra $(\Omega_F, d_F)$.

The second dg coalgebra arises from the Lie pair $({\TkM}, F)$ and the natural Lie algebroid $F$-module structure on the normal bundle $B:= {\TkM}/F$, which is known as the Bott connection~\cite{CSX}, defined by
\[
  \nabla^{\operatorname{Bott}}_a b=\pr_B [a, u],
\]
for any $a \in \Gamma(F)$, $b \in \Gamma(B)$ and $u \in \Gamma({\TkM})$ such that $\pr_B(u) = b$. Here $\pr_B \colon T_\k M \to B$ is the canonical projection.
Consider the space $\D(M)$ of $\k$-linear differential operators on $M$. When viewed as a filtered $R$-coalgebra, $\D(M)$ is indeed the universal enveloping algebra of the Lie algebroid ${\TkM}$  (cf.~\cite{Xu}). We will also use the same symbol $\Delta$, by abuse of notation, to denote the standard coproduct on $\D(M)$:
\begin{equation}\label{Eq: coproduct on DM}
\Delta \colon\ ~ \D(M) \to \D(M) \otimes_{R} \D(M)
\end{equation}
Let $\D(M)\Gamma(F) \subseteq \D(M)$ be the left ideal of $\D(M)$ generated by $\Gamma(F)$. Since
\[
 \Delta(\D(M)\Gamma(F)) \subseteq \D(M) \otimes_R \D(M)\Gamma(F) + \D(M)\Gamma(F) \otimes_R \D(M),
\]
the quotient space
\[
\D(B):=\frac{\D(M)}{\D(M)\Gamma(F)}
\]
inherits a coproduct structure
\begin{equation}\label{Eq: coproduct on DB}
  \Delta \colon\ ~  \D(B) \to \D(B) \otimes_{R} \D(B),
\end{equation}
from the coproduct $\Delta$~\eqref{Eq: coproduct on DM} on $\D(M)$. Thus $\D(B)$ is also an $R$-coalgebra, which we call the $R$-coalgebra of \textit{transversal differential operators} of the integrable distribution $F$~\cite{Luca}.
Moreover, the natural filtration on $\D(M)$ determined by the order of differential operators descends to a filtration on the $R$-coalgebra $\D(B)$
\begin{equation}\label{Eq: filtration on DB}
 R \subset \D^{\leq 1} (B) \cong R \oplus \Gamma(B) \subset \cdots \subset \D^{\leq k}(B) \subset \D^{\leq k+1}(B) \subset \cdots.
\end{equation}
Note that in general $\D(B)$ is not an associative algebra. According to Vitagliano~\cite{Luca}, there is an $A_\infty$ algebra structure on $\Omega_F(\D(B)):= \Omega_F \otimes_R \D(B)$.

The $R$-coalgebra $\D(B)$ admits a canonical $F$-module structure defined by
\[
   a \cdot \overline{u} = \overline{a \circ u},
\]
for any $a \in \Gamma(F)$ and $\overline{u} \in \D(B)$ that is the projection of $u \in \D(M)$.
Here $\circ$ denotes the composition of differential operators.
Denote by $d_F^\U$ its associated Chevalley-Eilenberg differential on $\Omega_F(\D(B))$.
In order to obtain an explicit formula for the differential $d_F^\U$, we consider the bundle projection $\pi \colon F[1] \to M$. Its tangent map
\[
\pi_\ast \colon T_{F[1]} \to \pi^\ast {\TkM}
\]
is a map of vector bundles over the graded manifold $F[1]$. Consider the image $\pi_\ast(d_F)$ of the Chevalley-Eilenberg differential $d_F \in \Gamma(T_{F[1]})$.
Locally one can always write
\begin{equation}\label{Eq:piastdF}
\pi_\ast(d_F)  = \sum_i \alpha^i \otimes u_i \in \Gamma(F[1]^\vee \otimes F) \cong \Omega^1_F \otimes_R \Gamma(F) \subseteq \Gamma(\pi^\ast {\TkM}),
\end{equation}
where $\{u_i\}$ is any local frame of $F$ and $\{\alpha^i\}$ is its dual local frame of $F[1]^\vee$.
Therefore, we have
\begin{align}\label{Eq: dFU}
   d_F^\U(\xi \otimes \overline{u}) &= d_F(\xi) \otimes \overline{u} + (-1)^{\abs{\xi}}\sum_i \xi \wedge \alpha^i \otimes \overline{u_i \circ u} \notag \\
   &= d_F(\xi) \otimes \overline{u} + (-1)^{\abs{\xi}} \xi \wedge \overline{\pi_\ast(d_F) \circ u},
\end{align}
for any homogeneous $\xi \in \Omega_F$ and $\overline{u} \in \D(B)$.
From this, we can verify that $d_F^\U$ preserves the filtration~\eqref{Eq: filtration on DF[1]}, and moreover it is a coderivation with respect to the $\Omega_F$-linear coproduct
\[
\Delta \colon \Omega_F(\D(B)) \to \Omega_F(\D(B)) \otimes_{\Omega_F} \Omega_F(\D(B)),
\]
which is a natural extension of the coproduct~\eqref{Eq: coproduct on DB} on $\D(B)$. Thus $(\Omega_F(\D(B)), d_F^\U, \Delta)$ is a filtered dg coalgebra over $(\Omega_F, d_F)$.

The key fact is the following
\begin{theorem}\label{thm: Luca's result}
There exists a filtered contraction of dg $\Omega_F$-modules
	\begin{equation}\label{Contration:DFonetoOmegaFDpolyB}
	\begin{tikzcd}
	\big(\D({F[1]}), \llbracket d_F, - \rrbracket\big) \arrow[loop left, distance=2em, start anchor={[yshift=-1ex]west}, end anchor={[yshift=1ex]west}]{}{H_\natural} \arrow[r, yshift = 0.7ex, "\Phi_\natural"] & \big(\Omega_F(\D(B)) ,  d_F^{\U}\big) \arrow[l, yshift = -0.7ex, "\Psi_\natural"],
	\end{tikzcd}
	\end{equation}
where the projection $\Phi_\natural$ is a morphism of $\Omega_F$-coalgebras.
\end{theorem}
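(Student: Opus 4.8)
The plan is to assemble the contraction from three pieces---$\Phi_\natural$, $\Psi_\natural$, $H_\natural$---by reducing everything, via the order filtration, to an explicit Koszul contraction along the fibres of $F[1]$, and then upgrading it by homological perturbation. First I fix auxiliary geometric data: a splitting $\TkM=F\oplus B$ of the Lie pair together with a $\TkM$-connection on $B$ extending the Bott $F$-connection and a connection on $F$. Through the Poincar\'e--Birkhoff--Witt theorem for the Lie pair $(\TkM,F)$ these data provide a filtered $R$-linear isomorphism $\pbw\colon\Gamma(S(B))\xrightarrow{\cong}\D(B)$ and a compatible trivialization of the symbols of $\D(F[1])$; tensoring with $\OmegaF$ produces the graded comparison of the two sides. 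Everything in sight is $\OmegaF$-linear and respects the order filtration \eqref{Eq: filtration on DF[1]}, which the excerpt records is preserved by $\llbracket \dF,-\rrbracket$.

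Next I analyze the associated graded. One has $\operatorname{gr}\D(F[1])=\Gamma(S_{\OmegaF}(T_{F[1]}))$, the symbol algebra, on which $\operatorname{gr}\llbracket \dF,-\rrbracket$ is the Poisson bracket $\{\sigma(\dF),-\}$ with the principal symbol $\sigma(\dF)$. Working in a local frame and using \eqref{Eq:piastdF}, the leading term of this bracket sends each vertical fibre symbol of $F[1]$ to the corresponding horizontal symbol $u_i\in\Gamma(F)$ and annihilates the $B$-directions, the structure-function contributions being of strictly lower order. Thus, to leading order, the graded differential is the Koszul differential pairing the $\operatorname{rank}F$ vertical fibre symbols with the $\operatorname{rank}F$ horizontal symbols in $\Gamma(F)$. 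Its cohomology is exactly $\OmegaF\otimes_R S(B)\cong\OmegaF(\D(B))$, and it carries the standard $\OmegaF$-linear Koszul contracting homotopy $h_0$ (sending each horizontal $F$-symbol $u_i$ back to its dual vertical fibre symbol), together with the evident projection $\phi_0$ and section $\psi_0$ satisfying $\phi_0\psi_0=\id$ and the side conditions.

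I then transfer this graded contraction to the filtered complexes. Writing $\llbracket \dF,-\rrbracket=\delta+t$, where $\delta$ is the Koszul part and the perturbation $t$ collects the anchor, curvature and structure-function terms, $t$ strictly lowers the order filtration; since the filtration is exhaustive and bounded below, each composite $t\,h_0$ is locally nilpotent and the perturbation-lemma series terminate on every filtration level, producing $\OmegaF$-linear, filtration-preserving maps
\[
\Phi_\natural=\sum_{n\ge 0}\phi_0(t\,h_0)^n,\qquad \Psi_\natural=\sum_{n\ge 0}(h_0\,t)^n\psi_0,\qquad H_\natural=\sum_{n\ge 0}(h_0\,t)^n h_0 .
\]
These satisfy $\Phi_\natural\circ\Psi_\natural=\id$, the homotopy identity $\id-\Psi_\natural\circ\Phi_\natural=\llbracket \dF,-\rrbracket\circ H_\natural+H_\natural\circ\llbracket \dF,-\rrbracket$, and the side conditions $H_\natural^2=0$, $\Phi_\natural\circ H_\natural=0$, $H_\natural\circ\Psi_\natural=0$, giving a filtered contraction of dg $\OmegaF$-modules.

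Two finer properties remain. For the coalgebra-morphism property, note that on the associated graded both sides are symmetric, hence cofree cocommutative, $\OmegaF$-coalgebras and $\phi_0$ is the symbol projection onto $S(B)$, which is manifestly comultiplicative; since $\llbracket \dF,-\rrbracket$ and $\dF^{\U}$ are coderivations and their corestrictions to cogenerators match through \eqref{Eq:piastdF} and \eqref{Eq: dFU}, the leading term of $\Phi_\natural$ is a coalgebra chain map. The intrinsic content of canonicity is that the corestriction of $\Phi_\natural$ to cogenerators is the choice-free bundle map $\pr_B\circ\pi_\ast\colon T_{F[1]}\to\pi^\ast\TkM\to\pi^\ast B$. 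I expect the \emph{main obstacle} to be propagating both facts from the associated graded to the filtered complexes: the homological perturbation lemma does not in general respect comultiplication, so the real work is to verify that the perturbation $t$---which depends on the connections and on the splitting $\TkM=F\oplus B$---neither destroys comultiplicativity of the projection nor alters its corestriction to cogenerators, so that the resulting $\Phi_\natural$ is a genuine, choice-independent morphism of $\OmegaF$-coalgebras. The parallel bookkeeping needed to pin down $\operatorname{gr}\llbracket \dF,-\rrbracket$ as the Koszul differential and to write $h_0$ explicitly in a frame is the other computationally heavy ingredient.
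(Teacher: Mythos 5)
Your overall architecture --- identify $\D(F[1])$ with its symbol algebra via a PBW-type trivialization, recognize the leading part of the transferred differential as a Koszul differential pairing the fibre coordinates of $F[1]$ with $\Gamma(F)$, and then run the filtered homological perturbation lemma with the lower-order terms as perturbation --- is essentially the paper's strategy (their Proposition~\ref{prop: coalgebra contraction on STF} plus the two PBW isomorphisms $\pbw$ and $\overline{\pbw}$ plus Lemma~\ref{Lem:HP}). The existence of \emph{some} filtered contraction of dg $\Omega_F$-modules is therefore within reach of your argument. But the statement also asserts that the projection $\Phi_\natural$ is a morphism of $\Omega_F$-coalgebras, and here your proposal has a genuine gap that you yourself flag as ``the main obstacle'' without resolving: the perturbation lemma does not preserve comultiplicativity, and knowing that the corestriction of the leading term to cogenerators is the symbol projection does not determine the perturbed projection as a coalgebra map (a map between cofree coalgebras is pinned down by its corestriction only \emph{after} it is known to be comultiplicative, so that step of your argument is circular). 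As written, neither the coalgebra property nor the choice-independence of $\Phi_\natural$ is proved.

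The missing idea in the paper is to sidestep the transfer of comultiplicativity entirely: one \emph{defines} $\Phi_\natural(D):=\overline{\pi_\ast(D)}$, where $\pi_\ast(D)(f)=D(\pi^\ast f)$ restricts $D$ to fibrewise-constant functions and one then projects $\D(M)\to\D(B)$. This map is manifestly canonical and is checked directly to be a filtered morphism of dg $\Omega_F$-coalgebras intertwining $\llbracket d_F,-\rrbracket$ with $d_F^{\U}$ (Lemma~\ref{lem:Phinaturalchainmap}). The real work is then to prove the two compatibilities $\Phi_\natural\circ\Psi_0=\id$ and $\Phi_\natural\circ H_0=0$ for the unperturbed contraction; these rest on the identity $\overline{\pbw}=\Phi_\natural\circ\pbw\circ\Psi$ relating the two PBW maps and on the vanishing $\pi_\ast(\pbw([i,j,k]))=0$ whenever $k\geq 1$, both proved by induction on the recursive definition of $\pbw$. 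Once these hold, they propagate through the perturbation series to give $\Phi_\natural\circ\Psi_\flat=\id$ and $\Phi_\natural\circ H_\flat=0$, whence
\[
\Phi_\natural-\Phi_\flat=\Phi_\natural\circ(\id-\Psi_\flat\Phi_\flat)=\Phi_\natural\circ\bigl(\llbracket d_F,-\rrbracket\circ H_\flat+H_\flat\circ\llbracket d_F,-\rrbracket\bigr)=0,
\]
so the perturbation-lemma projection coincides with the a priori canonical coalgebra morphism. Without this comparison step (or some substitute for it), your proof establishes only the dg-module contraction, not the coalgebra and canonicity claims of the theorem.
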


In general, the inclusion $\Psi_\natural$ and the homotopy map $H_\natural$ are \textit{not} morphisms of coalgebras. The construction of this contraction is due to Vitagliano~\cite{Luca}. However, the coalgebra structure was not addressed and many details of verification were skipped in~\cite{Luca}.
For completeness, we will follow Vitagliano's construction to give a thorough proof of Theorem~\ref{thm: Luca's result} in the subsequent subsection.

We call an integrable distribution $F \subseteq \TkM$ \emph{perfect}  if
there exists a transversal integrable distribution $B \subseteq \TkM$.  In this case, $F \bowtie B$ forms a matched pair of Lie algebroids~\cites{MM, Mokri}.
In this paper, perfect integrable distributions are of particular interest to us since integrable distributions arising from complex manifolds are perfect. For a perfect integrable distribution, the space of transversal differential operators $\D(B)$ can be naturally identified with the space $\U(B)$ of the universal enveloping algebra of the Lie algebroid $B$, which is a Hopf algebroid~\cite{Xu}.
Furthermore, it is proved by Bandiera, Sti\'{e}non and Xu that the complex $\big(\Omega_F(\U(B)), d_F^\U\big)$ admits a structure of dg Hopf algebroid over the commutative dg algebra $(\Omega_F, d_F)$~\cite{BSX}:
\begin{compactenum}
  \item The associative multiplication ``$\cdot$'' on $\Omega_F(\U(B))$ is defined by the relation
  \begin{equation}\label{Eq: product}
   (\xi \otimes b_1b_2\cdots b_n) \cdot (\eta \otimes u) = \sum_{k=0}^{n}\sum_{\sigma \in \sh(k,n-k)} (\xi \wedge \eth_{b_{\sigma(1)}}\cdots \eth_{b_{\sigma(k)}} \eta) \otimes b_{\sigma(k+1)}\cdots b_{\sigma(n)}\cdot u,
  \end{equation}
  for any $\xi, \eta \in \Omega_F, b_1,\cdots, b_n \in \Gamma(B)$, and $u \in \U(B)$, where $\eth$ is the $B$-connection on $\wedge F^\vee$ induced from the Bott-$B$-connection $\eth$ on $F$ defined by $\eth_b a = \pr_F[b,a]$ for any $b \in \Gamma(B), a \in \Gamma(F)$. Here $\pr_F \colon T_\k M \to F$ is the canonical projection.
  \item The source and target maps $\alpha,\beta \colon\ ~ \Omega_F \to \Omega_F(\U(B))$ are both inclusions.
  \item The comultiplication $\Delta$ is the $\Omega_F$-linear extension of that of the Hopf algebroid $\U(B)$~\cite{Xu}.
\end{compactenum}

\begin{theorem}\label{thm:contraction-on-DM}
Assume that $F \subseteq {\TkM}$ is a perfect integrable distribution with a transverse integrable distribution $B \subseteq {\TkM} $. Then the contraction~\eqref{Contration:DFonetoOmegaFDpolyB},  which now reads
 \[
	\begin{tikzcd}
	\big(\D({F[1]}), \llbracket d_F, - \rrbracket\big) \arrow[loop left, distance=2em, start anchor={[yshift=-1ex]west}, end anchor={[yshift=1ex]west}]{}{H_\natural} \arrow[r,yshift = 0.7ex, "\Phi_\natural"] & \big(\OmegaF(\U(B)) ,  \dF^{\mathcal{U}}\big),  \arrow[l,yshift = -0.7ex, "\Psi_\natural"]
	\end{tikzcd}
\]
can be chosen so that the inclusion $\Psi_\natural$ is a morphism of dg Hopf algebroids over the commutative dg algebra $(\Omega_F, d_F)$. That is, $\Psi_\natural$ is compatible with the source and target maps, multiplications, and comultiplications in the sense that
  \begin{align*}
   \Psi_\natural\big((\xi \otimes u) \cdot (\xi^\prime \otimes u^\prime)\big)  = \Psi_\natural(\xi \otimes u) \cdot \Psi_\natural(\xi^\prime \otimes u^\prime)  & \qquad\mbox{~and~}\quad &  \Psi_\natural\big(\Delta(\xi \otimes u)\big)  = \Delta \big(\Psi_\natural(\xi \otimes u)\big),
  \end{align*}
  for any $\xi, \xi^\prime \in \Omega_F$ and $u, u^\prime \in \U(B)$. Here, by abuse of notation, we use the same symbol $\Psi_\natural$ to denote its extension to $\OmegaF(\U(B)) \otimes_{\OmegaF} \OmegaF(\U(B)) \to \D({F[1]}) \otimes_{\OmegaF} \D({F[1]})$.
\end{theorem}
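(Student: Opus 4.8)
The plan is to realize $\Psi_\natural$ by an explicit horizontal lift determined by the transverse distribution $B$, and then to check the dg Hopf algebroid axioms one at a time. Perfectness gives a splitting $\TkM = F \oplus B$ with $B$ integrable, so that $F$ is identified with the quotient $\TkM/B$ and the Bott-$B$-connection $\eth_b a = \pr_F[b,a]$ on $F$ is exactly the Bott connection of the integrable distribution $B$ on $\TkM/B$; the Jacobi identity together with integrability of $B$ shows it is \emph{flat}. Hence the induced connection $\eth$ on $\OmegaF = \Gamma(\wedge F^\vee)$ is a bona fide representation of the Lie algebroid $B$, each $\eth_b$ being a degree-$0$ derivation of $\OmegaF$ covering $b$, i.e.\ a vector field on $F[1]$. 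Flatness is precisely what lets the assignment $b \mapsto \eth_b$, $f \mapsto (\text{multiplication by } f)$ descend from the tensor algebra to the universal enveloping algebra, producing a filtered algebra morphism $\psi \colon \U(B) \to \D(F[1])$ with $\psi(b_1\cdots b_n) = \eth_{b_1}\circ\cdots\circ\eth_{b_n}$. I then set $\Psi_\natural(\xi\otimes u) = \xi\cdot\psi(u)$, left multiplication by $\xi$ composed with $\psi(u)$; this is the inclusion attached by Vitagliano's construction to the horizontal lift coming from $B$, so by Theorem~\ref{thm: Luca's result} it already fits into a contraction and is a cochain map. It remains to verify the three compatibilities.

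Multiplicativity is the substantive point and is where perfectness is used essentially. Granting that $\psi$ is a well-defined algebra morphism, one transports the product \eqref{Eq: product} to plain composition in $\D(F[1])$ by commuting $\psi(b_1\cdots b_n)$ past multiplication by $\eta$ using the graded Leibniz rule for the derivations $\eth_{b_i}$: the $k$ derivations that land on $\eta$ produce the factor $\eth_{b_{\sigma(1)}}\cdots\eth_{b_{\sigma(k)}}\eta$, the remaining $n-k$ survive as $\psi(b_{\sigma(k+1)}\cdots b_{\sigma(n)})$, and the sum over which derivations act is exactly the shuffle sum in \eqref{Eq: product}. Thus $\Psi_\natural$ intertwines the products, and, since $\Psi_\natural(\xi\otimes 1)$ is multiplication by $\xi$, it is automatically compatible with the source and target maps. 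For the comultiplication, the coproduct $\Delta(D)(\xi\otimes\eta) = D(\xi\wedge\eta)$ on $\D(F[1])$ is multiplicative with respect to composition (a direct Sweedler computation), as is $\Delta$ on $\OmegaF(\U(B))$ by the dg Hopf algebroid structure; since $\Psi_\natural$ is an algebra morphism, it suffices to check $\Delta\circ\Psi_\natural = (\Psi_\natural\otimes\Psi_\natural)\circ\Delta$ on the generators $\xi\otimes 1$ and $1\otimes b$. The former is immediate since $\xi$ lies in the base, and for the latter both sides reduce to the primitivity relation $\Delta(\eth_b) = \eth_b\otimes 1 + 1\otimes\eth_b$, which follows at once from $\eth_b$ being a derivation.

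The one identity that still has geometric content --- and the step I expect to be the main obstacle --- is the cochain-map property at the level of generators, equivalently the compatibility of the two Bott connections. With $\{u_i\}$ a local frame of $F$ and $\{\alpha^i\}$ its dual as in \eqref{Eq:piastdF}, formula \eqref{Eq: dFU} gives $\dF^{\U}(1\otimes b) = \sum_i \alpha^i\otimes \pr_B[u_i,b] = \sum_i \alpha^i\otimes \nabla^{\Bott}_{u_i}b$, so I must verify
\[
\llbracket \dF, \eth_b\rrbracket \;=\; \sum_i \alpha^i\wedge \eth_{\nabla^{\Bott}_{u_i}b},
\]
an equality of degree-$1$ derivations of $\OmegaF$. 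Evaluating both sides on $C^\infty(M) = \Omega_F^0$ reduces it to the elementary identity $\pr_B[a,b] = \nabla^{\Bott}_a b$, and checking it on $\Omega_F^1 = \Gamma(F^\vee)$ amounts to the matched-pair compatibility between $\eth$ and $\nabla^{\Bott}$; since both sides are derivations, agreement on these generators of $\OmegaF$ suffices. Once it holds on generators, the cochain-map property propagates to all of $\OmegaF(\U(B))$ because $\llbracket \dF,-\rrbracket$ and $\dF^{\U}$ are derivations of their respective products and $\Psi_\natural$ is multiplicative. This both confirms that $\Psi_\natural$ is a genuine morphism of dg Hopf algebroids and pins it down as Vitagliano's inclusion, so the projection $\Phi_\natural$ and homotopy $H_\natural$ of Theorem~\ref{thm: Luca's result} complete it to the required contraction; the extension to $\OmegaF(\U(B))\otimes_{\OmegaF}\OmegaF(\U(B)) \to \D(F[1])\otimes_{\OmegaF}\D(F[1])$ is simply $\Psi_\natural\otimes\Psi_\natural$. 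The reason perfectness cannot be dispensed with is already visible in the first paragraph: without an integrable transverse $B$ the connection $\eth$ fails to be flat, $\psi$ is no longer well defined on $\U(B)$, and $\Psi_\natural$ ceases to be multiplicative.
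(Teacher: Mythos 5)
Your candidate map is the right one: the formula $\Psi_\natural(\xi\otimes b_1\cdots b_n)=\xi\,\eth_{b_1}\circ\cdots\circ\eth_{b_n}$ is exactly the morphism $\mu=\U(\rho)$ that the paper obtains from the anchor $\rho\colon F[1]\bowtie B\hookrightarrow T_{F[1]}$ of the dg Lie algebroid $F[1]\bowtie B$, and your hand verifications of multiplicativity, primitivity of $\eth_b$, and the cochain-map identity $\llbracket d_F,\eth_b\rrbracket=\sum_i\alpha^i\wedge\eth_{\nabla^{\Bott}_{u_i}b}$ are all sound (the paper gets these for free from functoriality of universal enveloping algebras of dg Lie algebroids, since $\U(\mathfrak{B})\cong\Omega_F(\U(B))$ as dg Hopf algebroids).

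The genuine gap is the sentence ``this is the inclusion attached by Vitagliano's construction \ldots\ so by Theorem~\ref{thm: Luca's result} it already fits into a contraction.'' That is precisely the assertion to be proved, and it does not follow from anything you have written. The inclusion produced by Theorem~\ref{thm: Luca's result} is not defined by a closed formula: it is $\Psi_\natural=\sum_{k\ge 0}(H_0\Theta)^k\Psi_0$ with $\Psi_0=\pbw\circ\Psi\circ\overline{\pbw}^{-1}$, the output of a homological perturbation. To identify it with your explicit $\mu$ one needs two nontrivial facts, neither of which you address: first, that $\pbw\circ\Psi=\mu\circ\overline{\pbw}$ (so that $\Psi_0=\mu$), which the paper proves by an induction comparing the two PBW recursions through Equation~\eqref{Eq: Bconnection and pullback connection}; and second, that $H_0\circ\Theta\circ\Psi_0=0$, so that the perturbation series collapses to its $k=0$ term and $\Psi_\natural=\Psi_0=\mu$. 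Without these (or, alternatively, a direct verification that $\Phi_\natural\circ\mu=\id$ and that $\id-\mu\circ\Phi_\natural$ is $\llbracket d_F,-\rrbracket$-null-homotopic, which you also do not carry out), you have only produced a dg Hopf algebroid morphism $\OmegaF(\U(B))\to\D(F[1])$; you have not shown it can serve as the inclusion of a contraction onto $(\OmegaF(\U(B)),d_F^{\U})$ with the canonical projection $\Phi_\natural$, which is what the theorem claims.
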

However,  the projection $\Phi_\natural$, being a morphism of $\Omega_F$-coalgebras, is not necessarily compatible with the multiplications; thus it is not a morphism of dg Hopf algebroids.

\subsection{Proof of Theorem~\ref{thm: Luca's result}} 	
We mainly follow Vitagliano's approach for the construction of contraction data.
The first step is to construct a filtered contraction for the dg module $(\Gamma(ST_{F[1]}), L_{d_F})$ of symmetric contravariant tensor fields on the dg manifold $(F[1], d_F)$ over $(\Omega_F(SB), d_F^{SB})$ (see Proposition~\ref{prop: coalgebra contraction on STF}).
Then we need to take a detour via two types of Poincar\'{e}-Birkhoff-Witt isomorphisms --- one is $\pbw\colon \Gamma(ST_{F[1]}) \rightarrow \D({F[1]})$ for the graded manifold $F[1]$~\cite{LS}, and the other is $\overline{\pbw}\colon \Gamma(SB) \to \D(B)$ for the Lie pair $({\TkM}, F)$~\cite{LGSX}.
These two PBW isomorphisms are crucial to the proof of Theorem~\ref{thm: Luca's result}. They allow us to construct the desired contraction~\eqref{Contration:DFonetoOmegaFDpolyB}  by transferring the problem to  that of $\Gamma(ST_{F[1]})$ onto $\OmegaF(SB)$ with non-standard differentials, for which we use homological perturbation lemma.
Note that Vitagliano's construction in~\cite{Luca} also relies on two PBW maps which he denoted by $\operatorname{PBW} \colon \Gamma(ST_{F[1]}) \to \D(F[1])$ and $\underline{\operatorname{PBW}} \colon \Omega_F(SB) \to \Omega_F(\D(B))$, all defined by local charts.
It is not hard to check that they coincide with ours using the iteration formulas.

\subsubsection{A contraction for symmetric contravariant tensor fields on the graded manifold $F[1]$}
Let $F \subseteq {\TkM}$ be an integrable distribution with normal bundle $B = {\TkM}/F$. There is a short exact sequence of vector bundles over $M$:
\begin{equation}\label{SES}
0 \rightarrow F \xrightarrow{i} {\TkM} \xrightarrow{\pr_B} B \rightarrow 0.
\end{equation}
Consider the space $\Gamma\left(ST_{F[1]}\right) = \oplus_{k \geq 0} \Gamma\left(S^kT_{F[1]}\right)$ of symmetric contravariant tensor fields on the graded manifold $F[1]$, i.e., sections of symmetric tensor products of the tangent bundle $T_{F[1]}$.
Note that $\Gamma\left(ST_{F[1]}\right)$ is an $\Omega_F$-coalgebra and carries an increasing filtration bounded below:
\begin{align*}
\Omega_F &\cong \Gamma\left(S^{\leq 0}T_{F[1]}\right)  \subseteq \cdots \subseteq \Gamma\left(S^{\leq k}T_{F[1]}\right)  \subseteq  \Gamma\left(S^{\leq k+1}T_{F[1]}\right)  \subseteq \cdots.
\end{align*}
The $\Omega_F$-coalgebra $\OmegaF\left(SB\right) = \oplus_{k\geq 0}\Omega_F\left(S^k B\right)$ also admits an increasing filtration bounded below:
\begin{align*}
 \Omega_F &\cong \OmegaF\left(S^{\leq 0} B\right) \subseteq \cdots \subseteq \OmegaF\left(S^{\leq k} B\right) \subseteq \OmegaF\left(S^{\leq k+1} B\right) \subseteq \cdots.
\end{align*}
The first key result is the following
\begin{proposition}\label{prop: coalgebra contraction on STF}
For each splitting $j$ of the short exact sequence~\eqref{SES} and a torsion-free  $F$-connection $\widetilde{\nabla}^F$ on $F$, there is a filtered contraction
\begin{equation}\label{Eq: contraction in CXX}
	\begin{tikzcd}
	\big(\Gamma(ST_{F[1]}), \LdF\big) \arrow[loop left, distance=2em, start anchor={[yshift=-1ex]west}, end anchor={[yshift=1ex]west}]{}{H} \arrow[r,yshift = 0.7ex, "\Phi"] & \big(\OmegaF(SB) , d^{SB}_F\big) \arrow[l,yshift = -0.7ex, "\Psi"],
	\end{tikzcd}
\end{equation}
 satisfying the condition that both $\Phi$ and $\Psi$ are morphisms of $\Omega_F$-coalgebras.
\end{proposition}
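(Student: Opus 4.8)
The plan is to build the contraction first on the linear piece (symmetric degree one) and then transport it to all symmetric powers by the symmetric tensor trick, which will make $\Phi$ and $\Psi$ coalgebra morphisms almost for free. First I would describe $\Gamma(T_{F[1]})$ using the two pieces of geometric data. The splitting $j$ of \eqref{SES} gives $\TkM = F\oplus j(B)$, and $\widetilde{\nabla}^F$ gives a horizontal/vertical decomposition of $T_{F[1]}$ for the projection $\pi\colon F[1]\to M$. Together they split $\Gamma(T_{F[1]})$ into a vertical part (canonically $\OmegaF\otimes_R\Gamma(F)$ up to a degree shift), a horizontal-$F$ part, and a horizontal-$B$ part $\cong \OmegaF(S^1B)$. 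Writing $\LdF=\llbracket \dF,-\rrbracket$ in this splitting, a local computation shows its $\OmegaF$-linear leading term $\delta_0$ is the anchor $F\hookrightarrow\TkM$: it carries the vertical-$F$ directions isomorphically onto the horizontal-$F$ directions and annihilates everything else. Hence $\delta_0$ is a differential whose cohomology is exactly $\OmegaF(S^1B)$, and inverting $\delta_0$ on the acyclic pair one writes down an $\OmegaF$-linear contraction $(\phi,\psi,h)$ from $(\Gamma(T_{F[1]}),\delta_0)$ onto $(\OmegaF(S^1B),0)$, with $\psi$ the horizontal lift of $B$, $\phi$ the projection onto the horizontal-$B$ component, and $h$ supported on horizontal-$F$.

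Second, I would extend this to the symmetric algebra. Since $\Gamma(ST_{F[1]})$ and $\OmegaF(SB)$ are the symmetric $\OmegaF$-coalgebras cogenerated by $\Gamma(T_{F[1]})$ and $\OmegaF(B)$, the symmetric-power maps $\Phi_0:=S(\phi)$ and $\Psi_0:=S(\psi)$ are morphisms of $\OmegaF$-coalgebras, and the symmetric tensor trick (cf.\ \cite{Manetti}) produces from $h$ a homotopy $H_0$ making $(\Phi_0,\Psi_0,H_0)$ a contraction of $(\Gamma(ST_{F[1]}),\delta_0^{\mathrm{ext}})$ onto $(\OmegaF(SB),0)$, where $\delta_0^{\mathrm{ext}}$ is the coderivation extension of $\delta_0$. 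The side conditions $\phi h=0$, $h\psi=0$, $h^2=0$ at the linear level are precisely what make the tensor-trick $H_0$ satisfy its own side conditions and keep the whole construction filtered.

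Third, the full differential $\LdF$ on $\Gamma(ST_{F[1]})$ is the coderivation extension of $\LdF|_{S^1}$, so $t:=\LdF-\delta_0^{\mathrm{ext}}$ (the part of $\LdF$ built from $\dF$ acting on $\OmegaF$ and from the connection terms) is again a coderivation, and I would treat it as a perturbation of $\delta_0^{\mathrm{ext}}$. Because $H_0$ strictly lowers the order filtration, $tH_0$ is locally nilpotent and the homological perturbation lemma applies, yielding the perturbed contraction with $\Phi=\Phi_0\sum_{n\geq0}(tH_0)^n$, $\Psi=\sum_{n\geq0}(H_0t)^n\Psi_0$, homotopy $H$, and a transferred differential on $\OmegaF(SB)$. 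I would then identify this transferred differential with $d^{SB}_F$ by evaluating on $S^{\leq1}$, where it collapses to the Chevalley--Eilenberg differential of the Bott $F$-connection $\nabla^{\Bott}$ on $B$, extended to $SB$ as a coderivation. All the maps respect the stated increasing filtrations, giving the filtered contraction.

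The hard part will be to verify that the \emph{perturbed} maps $\Phi$ and $\Psi$ are still morphisms of $\OmegaF$-coalgebras, since $H$ itself is not (consistent with the statement). The key point is that the perturbation $t$ is a coderivation and $\Phi_0,\Psi_0$ are already coalgebra morphisms; one then shows, by induction on the number of factors and using the cocommutativity of the symmetric coproducts, that each term $\Phi_0(tH_0)^n$ and $(H_0t)^n\Psi_0$ is compatible with comultiplication, so that the geometric series defining $\Phi$ and $\Psi$ are coalgebra morphisms. This is essentially a coalgebra-enriched version of the perturbation lemma; the filtration-lowering property of $H_0$ guarantees the relevant sums are finite on each element, making the inductive argument rigorous. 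The torsion-freeness of $\widetilde{\nabla}^F$ enters here (and in identifying the transferred differential), ensuring that the connection-built homotopy is compatible with the symmetric rather than merely the tensor structure.
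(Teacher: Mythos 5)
Your overall skeleton matches the paper's: split $T_{F[1]}$ via $j$ and the connection into vertical, horizontal-$F$ and horizontal-$B$ pieces, contract the resulting two-term complex $\delta\colon F[1]\to F$ onto $B$, extend to symmetric powers so that $\Phi_0=S(\phi)$ and $\Psi_0=S(\psi)$ are coalgebra morphisms, and then perturb. The gap is in how you handle the perturbation. The decisive fact in the paper is that, for the connection $\nabla^F$ built from $(j,\widetilde{\nabla}^F)$ as in \eqref{Eq: nablaF}, the perturbation $t=d_{\nabla^{\bas}}-R_{\widetilde{\nabla}^F}$ \emph{anticommutes} with the homotopy $h_k$. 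This rests on Lemma~\ref{lem: special connection on F}: torsion-freeness forces the basic connection to preserve the splitting $T_\k M\cong F\oplus j(B)$ (Equation \eqref{Eq: basic connection on tm}) and forces the basic curvature to vanish on $j(B)$ and to land in $\Omega_F^2(\Hom(F,F[1]))$. Combined with the side conditions $\phi_k h_k=0$, $h_k\psi_k=0$, $h_k^2=0$, the anticommutation kills every higher term of the perturbation series: $\Phi=\Phi_0$, $\Psi=\Psi_0$, $H=H_0$, and the transferred differential is $\phi_k\circ d_{\nabla^{\bas}}\circ\psi_k=d_F^{S^kB}$. The coalgebra property of $\Phi$ and $\Psi$ is then immediate, because they are literally the symmetric powers of $\phi_1,\psi_1$. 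This is also the correct place where torsion-freeness enters, not (as you suggest) in making the homotopy ``compatible with the symmetric structure.''

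By contrast, the substitute argument you offer for the ``hard part'' would not go through as stated. Being a morphism of coalgebras is a quadratic condition ($\Delta\circ\Phi=(\Phi\otimes\Phi)\circ\Delta$), so you cannot verify it term by term on the series $\sum_n\Phi_0(tH_0)^n$; and there is no general principle that homotopy transfer along a contraction whose projection and inclusion are coalgebra morphisms produces perturbed coalgebra morphisms --- indeed the paper explicitly remarks that \eqref{Eq: contraction in CXX} is \emph{not} a coalgebra contraction in Manetti's sense, precisely because $H$ is not a coalgebra homotopy, so cocommutativity alone will not rescue the induction. The statement is saved only because the higher terms vanish identically, which is exactly the computation your proposal is missing. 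Once you add the verification $h_k\circ(d_{\nabla^{\bas}}-R_{\widetilde{\nabla}^F})=-(d_{\nabla^{\bas}}-R_{\widetilde{\nabla}^F})\circ h_k$, your argument collapses onto the paper's and the remaining steps (identification of the transferred differential, filtration-preservation) are routine.
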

\begin{remark}
 However, the contraction~\eqref{Eq: contraction in CXX} is not a coalgebra contraction in the sense of~\cite{Manetti}.
\end{remark}
This is a direct consequence of~\cite{CXX}*{Proposition 2.17} (see also~\cite{AC}). Below we give a more conceptual proof.

\emph{Step 1 -- An explicit description of complexes of contravariant tensor fields on $(F[1],d_F)$.}
According to \cite{GSM} (see also~\cite{Mehta}), for each $k \geq 1$, the complex $\big(\Gamma(S^kT_{F[1]}), \LdF\big)$ can be identified as a representation up to homotopy of the Lie algebroid $F$ on the graded vector bundle $S^k(F[1] \oplus T_\k M)$ over $M$. We recall its construction briefly below.

Observe that there is a short exact sequence of vector bundles over the graded manifold ${F[1]}$:
\begin{equation}\label{SES of tangent bundle on F[1]}
\begin{tikzcd}
	0 \ar{r} & \pi^\ast(F[1]) \ar{r}{I} & T_{F[1]} \ar{r}{\pi_\ast} & \pi^\ast({\TkM}) \ar{r} & 0,
\end{tikzcd}
\end{equation}
where $\pi_\ast\colon T_{F[1]} \rightarrow \pi^\ast({\TkM})$ is the tangent map of the bundle projection $\pi \colon F[1] \rightarrow M$, and $I$ is the canonical vertical lifting.
Taking global sections gives rise to a short exact sequence of left $\Omega_F$-modules:
\[
\begin{tikzcd}
	0 \ar{r} & \OmegaF \otimes_R \Gamma(F[1]) \ar{r}{I} & \Gamma(T_{F[1]}) \ar{r}{\pi_\ast} & \OmegaF \otimes_R \Gamma(T_\k M)   \ar{r} & 0.
\end{tikzcd}
\]
Here the canonical vertical lifting is the $\Omega_F$-linear contraction: for any $\omega \in \OmegaF$ and $a[1] \in \Gamma(F[1])$,
\begin{equation}\label{Eq: Def of I}
I(\omega \otimes a[1]) = \omega \otimes \iota_a.
\end{equation}
Let us choose a linear connection $\nabla^F$ on the vector bundle $F$ over $M$.
This connection $\nabla^F$ induces a splitting of the short exact sequence~\eqref{SES of tangent bundle on F[1]}. Then $T_{F[1]}$ can be identified with $F[1] \times_M (F[1] \oplus T_\k M)$. Thus, one has an isomorphism of $\OmegaF$-modules
\begin{align}\label{Eqt:splitXFone}
 \Gamma\left(T_{F[1]}\right) &\stackrel{\cong}{\longrightarrow} \Omega_F (F[1] \oplus T_\k M) = \Omega_F \otimes_R \Gamma(F[1] \oplus T_\k M),
\end{align}
The isomorphism~\eqref{Eqt:splitXFone} transfers the Lie derivative $\LdF$ on $\Gamma(T_{F[1]})$ to a square zero derivation
\begin{equation}\label{Eq: Def of DnablaF}
	D_{\nabla^F}=\delta+d_{\nabla^{\bas}}+{R^{\bas}_{\nabla^F}}\colon \Omega_F (F[1] \oplus T_\k M) \to \Omega_F (F[1] \oplus T_\k M)[1],
\end{equation}
where
\begin{itemize}
	\item $\delta$ is an $\Omega_F$-linear derivation determined by
      \begin{equation}\label{Eq: Def of delta}
        \delta \colon \Gamma(F[1]\oplus T_\k M) \to \Gamma(F[1] \oplus T_\k M)[1], \qquad  \delta(a[1] + u) = a,
      \end{equation}
      for any $a[1] \in \Gamma(F[1])$ and $u \in \Gamma(T_\k M)$;
	\item $d_{\nabla^{\bas}} \colon \OmegaF^\bullet(F[1]\oplus T_\k M) \to \Omega^{\bullet+1}_F (F[1] \oplus T_\k M)$ is the covariant derivative of the basic $F$-connection $\nabla^{\bas}$ on $F[1] \oplus T_\k M$ defined by
   \begin{equation}\label{Eq: def of basic connection on TM}
     \nabla^{\bas}_a(u):=\nabla^F_u a+[a,u],
   \end{equation}
   and
   \begin{equation}\label{Eq: def of basic connection on F[1]}
    \nabla^{\bas}_a(a^\prime[1]):= (\nabla^{\bas}_a a^\prime)[1] = (\nabla^F_{a^\prime} a+[a,a^\prime])[1],
   \end{equation}
  for any $a, a^\prime \in \Gamma(F)$ and $u \in \Gamma(\TkM)$;
	\item ${R^{\bas}_{\nabla^F}} \in \Omega_F^2(\Hom(T_\k M, F[1]))$, known as the basic curvature of $\nabla^F$, defines an $\Omega_F$-linear map $\Omega^\bullet_F(T_\k M) \to \Omega_F^{\bullet+2}(F[1])$ by
	 	\begin{align*}
 {R^{\bas}_{\nabla^F}}(u)(a^\prime, a^{\prime\prime}) &   :=\left(\nabla^F_{u}[a^\prime, a^{\prime\prime}]-[\nabla^F_u a^\prime, a^{\prime\prime}]-[a^\prime, \nabla^F_u  a^{\prime\prime}] - \nabla^F_{\nabla^{\bas}_{a^{\prime\prime}}u}a^\prime +\nabla^F_{\nabla^{\bas}_{a^\prime} u} a^{\prime\prime}\right)[1],
	 	\end{align*}
for any $a^\prime, a^{\prime\prime} \in \Gamma(F)$ and $u\in \Gamma(\TkM)$.
	 \end{itemize}
The pair $\big(F[1] \oplus T_\k M, D_{\nabla^F}\big)$ is a homotopy $F$-module or a representation up to homotopy of $F$. For details, see~\cites{AC, GSM}.

The isomorphism~\eqref{Eqt:splitXFone} extends, by taking symmetric tensor product, to contravariant tensor fields on $(F[1],d_F)$
 \begin{align}\label{Eqt:splitXFone2}
 \Gamma\left(S^kT_{F[1]}\right) &\stackrel{\cong}{\longrightarrow} \Omega_F \left(S^k(F[1] \oplus T_\k M)\right).
 \end{align}
Meanwhile, the differential $D_{\nabla^F}$~\eqref{Eq: Def of DnablaF} on $\Omega_F(F[1]\oplus T_\k M)$ extends by Leibniz rule to a differential on $\Omega_F\big(S^k(F[1] \oplus T_\k M)\big)$, which is still denoted by $D_{\nabla^F}$ by abuse of notation.
The isomorphism~\eqref{Eqt:splitXFone2} becomes an isomorphism of cochain complexes
\[
  	\Big(\Gamma(S^kT_{F[1]}), \LdF\Big) \xrightarrow{\cong} \Big(\Omega_F\big(S^k(F[1] \oplus T_\k M)\big), D_{\nabla^F} = \delta + d_{\nabla^{\bas}} + {R^{\bas}_{\nabla^F}}\Big).
\]

The differential $D_{\nabla^F}$~\eqref{Eq: Def of DnablaF} can be simplified if we choose a special linear connection on $F$.
Choose a torsion-free $F$-connection $\widetilde{\nabla}^F$ on $F$ and a splitting $j$ of the short exact sequence~\eqref{SES}. There is an induced linear connection on $F$ defined as follows.
The projection $\pr_F \colon \TkM \to F$   determines a Bott ``$B$-connection"~\cite{CSX} on $F$~\footnote{When $F \subseteq T_\k M$ is perfect, i.e., $B$ is a Lie algebroid, $\eth$ defined by Equation~\eqref{Eq: Bott B-connection} becomes the genuine Bott $B$-connection on $F$.}:
 \begin{equation}\label{Eq: Bott B-connection}
 \eth \colon \Gamma(B)\otimes \Gamma(F)\to \Gamma(F),  \quad (b,a) \mapsto \eth_b a = \pr_F [j(b),a].
 \end{equation}
Define
 \begin{equation}\label{Eq: nablaF}
 \nabla^F_u a = \widetilde{\nabla}^F_{\pr_F(u)}a + \eth_{\pr_B(u)}a =  \widetilde{\nabla}^F_{\pr_F(u)}a + \pr_F [\pr_B(u),a],
 \end{equation}
 for any $u \in \Gamma({\TkM})$ and $a \in \Gamma(F)$. It is easy to see that $\nabla^F$ defined above is indeed a linear connection on $F$.

\begin{lemma}\label{lem: special connection on F}
The basic curvature ${R^{\bas}_{\nabla^F}} \in \Omega^2_F(\Hom(T_\k M, F[1]))$ of the linear connection $\nabla^F$ defined in~\eqref{Eq: nablaF} satisfies
 	\begin{align*}
 	{R^{\bas}_{\nabla^F}}(a,a^\prime)(a^{\prime\prime}) = -R_{\widetilde{\nabla}^F}(a,a^\prime)a^{\prime\prime}, &\qquad \mbox{ ~and}   & {R^{\bas}_{\nabla^F}}(a,a^\prime)(j(b)) = 0,
 	\end{align*}
for any $a, a^{\prime}, a^{\prime\prime} \in \Gamma(F)$, $b \in \Gamma(B)$, where $R_{\widetilde{\nabla}^F} \in \Omega_F^2(\End F)$ denotes the curvature of $\widetilde{\nabla}^F$.
 \end{lemma}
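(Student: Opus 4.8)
The plan is to evaluate the basic curvature $R^{\bas}_{\nabla^F}$ directly on the two kinds of arguments that, under the splitting $\TkM = F \oplus j(B)$ induced by~\eqref{SES}, span $\Gamma(\TkM)$: sections $a^{\prime\prime} \in \Gamma(F)$, and sections of the form $j(b)$ with $b \in \Gamma(B)$. The decisive simplification is that the connection $\nabla^F$ of~\eqref{Eq: nablaF} degenerates on each summand. On $F$ one has $\pr_B(a^{\prime\prime}) = 0$, so $\nabla^F_{a^{\prime\prime}} = \widetilde{\nabla}^F_{a^{\prime\prime}}$; on $j(B)$ one has $\pr_F(j(b)) = 0$, so $\nabla^F_{j(b)} a = \eth_b a = \pr_F[j(b), a]$ by~\eqref{Eq: Bott B-connection}. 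I would also record the two values of the basic $F$-connection that enter the curvature formula: torsion-freeness of $\widetilde{\nabla}^F$ gives $\nabla^{\bas}_{a^\prime}(a^{\prime\prime}) = \widetilde{\nabla}^F_{a^{\prime\prime}} a^\prime + [a^\prime, a^{\prime\prime}] \in \Gamma(F)$, whereas a one-line computation with the splitting yields $\nabla^{\bas}_{a^\prime}(j(b)) = -\,j(\pr_B[j(b), a^\prime]) \in j(B)$.

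For the first identity I substitute $u = a^{\prime\prime}$; by the above every term becomes an expression in $\widetilde{\nabla}^F$ alone. Writing $\widetilde{\nabla} := \widetilde{\nabla}^F$ for brevity and eliminating all brackets of $F$-sections through the torsion-free identity $[a,a^\prime] = \widetilde{\nabla}_a a^\prime - \widetilde{\nabla}_{a^\prime} a$, the two terms $-\nabla^F_{\nabla^{\bas}_{a^\prime} a^{\prime\prime}} a$ and $+\nabla^F_{\nabla^{\bas}_a a^{\prime\prime}} a^\prime$ collapse to $-\widetilde{\nabla}_{\widetilde{\nabla}_{a^\prime} a^{\prime\prime}} a$ and $+\widetilde{\nabla}_{\widetilde{\nabla}_a a^{\prime\prime}} a^\prime$. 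Collecting the remaining terms according to whether $a$ or $a^\prime$ is the section being differentiated, the auxiliary terms of the form $\widetilde{\nabla}_{\widetilde{\nabla}_x y}\, z$ cancel in pairs and the total reorganizes into the cyclic expression $R_{\widetilde{\nabla}^F}(a^{\prime\prime}, a) a^\prime + R_{\widetilde{\nabla}^F}(a^\prime, a^{\prime\prime}) a$. The first (algebraic) Bianchi identity for the torsion-free $F$-connection $\widetilde{\nabla}^F$ then rewrites this as $-R_{\widetilde{\nabla}^F}(a, a^\prime) a^{\prime\prime}$, which is exactly the claim.

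For the second identity I substitute $u = j(b)$. The first three terms of the curvature formula become $\pr_F[j(b), [a,a^\prime]]$, $-[\pr_F[j(b), a], a^\prime]$, and $-[a, \pr_F[j(b), a^\prime]]$, while inserting the value of $\nabla^{\bas}(j(b))$ turns the last two terms into the $\pr_F$-brackets $\pr_F[j(\pr_B[j(b), a^\prime]), a]$ and $-\pr_F[j(\pr_B[j(b), a]), a^\prime]$. The key step is to expand the leading term by the Jacobi identity, $[j(b), [a,a^\prime]] = [[j(b), a], a^\prime] + [a, [j(b), a^\prime]]$, and then split each of $[j(b), a]$ and $[j(b), a^\prime]$ into its $F$- and $j(B)$-parts via $[j(b), a] = \pr_F[j(b), a] + j(\pr_B[j(b), a])$. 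Because brackets of $F$-sections stay in $F$ by integrability, the $F$-parts reproduce exactly the two genuine-bracket terms and the $j(B)$-parts reproduce exactly the two $\pr_F \circ j \circ \pr_B$ terms; all five contributions then cancel, giving $R^{\bas}_{\nabla^F}(a, a^\prime)(j(b)) = 0$. I expect the only real obstacle to be organizational: keeping the signs and the $\pr_F/\pr_B$ bookkeeping straight in the second identity, and justifying the first Bianchi identity in the Lie-algebroid setting for the first---though the latter holds verbatim as on the tangent bundle, since its proof uses only the Jacobi identity, the anchor-Leibniz rule for $\widetilde{\nabla}^F$, and torsion-freeness.
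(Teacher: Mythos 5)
Your proposal is correct and follows essentially the same route as the paper: evaluate $R^{\bas}_{\nabla^F}$ separately on $\Gamma(F)$ and on $j(\Gamma(B))$, using that $\nabla^F$ reduces to $\widetilde{\nabla}^F$ on the first summand and to $\eth_b = \pr_F[j(b),-]$ on the second, and close the $j(b)$ case with the Jacobi identity together with integrability of $F$, exactly as in the paper. The only divergence is cosmetic: for the first identity the paper rearranges directly into $\widetilde{\nabla}^F_{[a,a']}a'' - \widetilde{\nabla}^F_a\widetilde{\nabla}^F_{a'}a'' + \widetilde{\nabla}^F_{a'}\widetilde{\nabla}^F_a a'' = -R_{\widetilde{\nabla}^F}(a,a')a''$, whereas you pass through $R_{\widetilde{\nabla}^F}(a'',a)a' + R_{\widetilde{\nabla}^F}(a',a'')a$ and the algebraic Bianchi identity, which you correctly note remains valid for torsion-free Lie-algebroid connections.
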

 \begin{proof}
Since $\widetilde{\nabla}^F$ is torsion-free, the associated basic $F$-connection $\nabla^{\bas}$ as in~\eqref{Eq: def of basic connection on TM} and~\eqref{Eq: def of basic connection on F[1]} becomes
 \begin{align}
 \nabla^{\bas}_a(u) &= \nabla^F_u a + [a, u] = \widetilde{\nabla}^F_{\pr_F(u)}a + \pr_F [\pr_B(u),a] + [a, u] \notag \\
 &=  \widetilde{\nabla}^F_a \pr_F(u) + \pr_B[a, \pr_B(u)] = \widetilde{\nabla}^F_a \pr_F(u) + \nabla^{\Bott}_a \pr_B(u), \label{Eq: basic connection on tm}
 \end{align}
 and
 \begin{equation}\label{Eq: basic connection on F[1]}
 \nabla^{\bas}_a(a^\prime [1])= (\widetilde{\nabla}^F_a a^\prime)[1] = (\nabla^{\bas}_a a^\prime)[1] ,
 \end{equation}
respectively, for any $a, a^\prime \in \Gamma(F)$ and $u \in \Gamma(T_\k M)$.
Using Equation~\eqref{Eq: basic connection on tm} and the fact that $\widetilde{\nabla}^F$ is torsion-free, one has
\begin{align*}
  {R^{\bas}_{\nabla^F}}(a,a^\prime)(a^{\prime\prime}) &= \widetilde{\nabla}^F_{[a,a^\prime]} a^{\prime\prime} - \widetilde{\nabla}_a^F \widetilde{\nabla}_{a^\prime}^F a^{\prime\prime} +  \widetilde{\nabla}^F_{a^\prime} \widetilde{\nabla}^F_a a^{\prime\prime}= -R_{\widetilde{\nabla}^F}(a,a^\prime)a^{\prime\prime}.
\end{align*}
Meanwhile, by Equations~\eqref{Eq: nablaF} and~\eqref{Eq: basic connection on tm}, one also has
\begin{align*}
  & \qquad {R^{\bas}_{\nabla^F}}(a,a^\prime)(j(b)) \\
  &= \pr_F[j(b), [a,a^\prime]] - [\pr_F[j(b),a], a^\prime] - [a, \pr_F [j(b),a^\prime]] - \pr_F[\pr_B[a^\prime, j(b)], a] + \pr_F[\pr_B[a,j(b)], a^\prime] \notag \\
       &= \pr_F\left([j(b), [a, a^\prime]] - [[j(b), a], a^{\prime}] - [a, [j(b), a^\prime]]\right)= 0,
\end{align*}
for any $a, a^\prime \in \Gamma(F)$ and $b \in \Gamma(B)$.
This completes the proof.
\end{proof}
As a consequence, we have
\begin{corollary}
Given a splitting $j\colon B \to \TkM$ of the short exact sequence~\eqref{SES} and
a torsion-free $F$-connection$\widetilde{\nabla}^F$ on $F$, we have an isomorphism of cochain complexes
\begin{equation}\label{Eq: STF[1] as representation UTH}
  	\Big(\Gamma\left(S^kT_{F[1]}\right), \LdF\Big) \xrightarrow{\cong} \Big(\Omega_F\big(S^k(F[1] \oplus T_\k M)\big), D_{\nabla^F} = \delta + d_{\nabla^{\bas}} - R_{ \widetilde{\nabla}^F}\Big).
\end{equation}
\end{corollary}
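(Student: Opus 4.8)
The plan is to obtain the corollary as a direct specialization of the general identification established above, with no new computation beyond Lemma~\ref{lem: special connection on F}. Recall that the $\OmegaF$-module isomorphism~\eqref{Eqt:splitXFone2} induced by \emph{any} linear connection $\nabla^F$ on $F$ transports the Lie derivative $\LdF$ to the square-zero derivation $D_{\nabla^F} = \delta + d_{\nabla^{\bas}} + {R^{\bas}_{\nabla^F}}$ of $\Omega_F(S^k(F[1] \oplus \TkM))$; this is exactly the cochain isomorphism displayed just before Lemma~\ref{lem: special connection on F}, and it holds verbatim for every $\nabla^F$. The only remaining task is therefore to rewrite the basic-curvature term once we plug in the specific connection $\nabla^F$ of~\eqref{Eq: nablaF}, built from the torsion-free $F$-connection $\widetilde{\nabla}^F$ and the splitting $j$; the terms $\delta$ and $d_{\nabla^{\bas}}$ are left untouched.

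First I would record that, since $j$ identifies $\TkM \cong F \oplus j(B)$, the bundle map ${R^{\bas}_{\nabla^F}} \in \Omega^2_F(\Hom(\TkM, F[1]))$ is completely determined by its restrictions to the two summands $F$ and $j(B)$. Lemma~\ref{lem: special connection on F} computes precisely these restrictions: on $F$ it equals $-R_{\widetilde{\nabla}^F}$ (viewed, after the degree shift, as a map $F \to F[1]$), while on $j(B)$ it vanishes. Hence ${R^{\bas}_{\nabla^F}}$ is nothing but the $\OmegaF$-linear derivation of the symmetric algebra extending the generator-level map $\TkM \to \Omega^2_F(F[1])$ that sends $a'' \in \Gamma(F)$ to $-R_{\widetilde{\nabla}^F}(-,-)a''[1]$ and annihilates both $j(\Gamma(B))$ and the $F[1]$-summand. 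This is the content of the shorthand $-R_{\widetilde{\nabla}^F}$ appearing in the statement.

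Substituting this identity into $D_{\nabla^F} = \delta + d_{\nabla^{\bas}} + {R^{\bas}_{\nabla^F}}$ converts the differential into $\delta + d_{\nabla^{\bas}} - R_{\widetilde{\nabla}^F}$, which is exactly the asserted isomorphism~\eqref{Eq: STF[1] as representation UTH}. The one point requiring care---and the step I would treat as the main, though minor, obstacle---is the bookkeeping of the previous paragraph: one must keep track of the degree shift from $F$ to $F[1]$ in the target and extend $R_{\widetilde{\nabla}^F}$ by zero along $j(B)$, so that it is well defined as an operator on all of $\Omega_F(S^k(F[1] \oplus \TkM))$ rather than only on the subalgebra generated by $F$. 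Once this interpretation is fixed, the hard input---the two curvature identities---is already furnished by Lemma~\ref{lem: special connection on F}, and the corollary follows.
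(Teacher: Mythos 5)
Your argument is correct and is precisely the paper's intended reasoning: the corollary is stated there simply as a consequence of the general cochain isomorphism transporting $\LdF$ to $D_{\nabla^F}=\delta+d_{\nabla^{\bas}}+R^{\bas}_{\nabla^F}$, specialized to the connection $\nabla^F$ of~\eqref{Eq: nablaF}, with Lemma~\ref{lem: special connection on F} supplying the identification $R^{\bas}_{\nabla^F}=-R_{\widetilde{\nabla}^F}$ (extended by zero on $j(B)$ and on the $F[1]$-summand). Your extra remarks on the degree shift and the extension by zero are exactly the bookkeeping the paper leaves implicit.
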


\emph{Step 2 -- A basic contraction}.
Let $A$ be a commutative $\k$-algebra.  Assume that $U$ is an $A$-module and $V \subset U$ is an $A$ submodule such that the quotient $A$-module $U/V$ is projective. Then we have a split short exact sequence of $A$-modules
\begin{equation}\label{SES of VS}
0 \rightarrow V \xrightarrow{i} U \xrightarrow{\pr_{U/V}} U/V \rightarrow 0,
\end{equation}
and a $2$-term cochain complex of $A$-modules $ (V[1] \to U, \delta)$ concentrated in degrees $(-1)$ and $0$. Here $V[1] = \{v[1] \mid v \in V\}$ is the $A$-module obtained from $V$ by a degree shifting and the differential $\delta$ is simply the inclusion $v[1] \mapsto i(v)$  for any $v[1] \in V[1]$.

It is well-known that the $2$-term complex $(V[1] \to U, \delta)$ homotopy contracts to $U/V$. By taking symmetric tensor product, its $k$-th symmetric tensor product $S_A^k(V[1] \oplus U)$ homotopy contracts onto the $k$-th symmetric tensor product $S_A^k(U/V)$ of the $A$-module $U/V$ (cf. \cites{SX, Luca}).
For completeness, we sketch a proof below.
\begin{lemma}\label{Lem: contraction of vector spaces}
Any splitting $j \colon U/V \to U$ of the short exact sequence~\eqref{SES of VS} of $A$-modules induces a contraction for any $k \geq 1$:
	\begin{equation}\label{Eq: contraction of vector spaces}
	\begin{tikzcd}
	\big(S_A^k(V[1] \oplus U),\delta\big) \arrow[loop left, distance=2em, start anchor={[yshift=-1ex]west}, end anchor={[yshift=1ex]west}]{}{h_k} \arrow[r,yshift = 0.7ex, "\phi_k"] & \big(S_A^k(U/V),0\big) \arrow[l,yshift = -0.7ex, "\psi_k"].
	\end{tikzcd}
	\end{equation}
	Here $h_k$ and $\psi_k$ depend on the choice of $j$ while $\phi_k$ does not.
\end{lemma}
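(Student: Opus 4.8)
The plan is to construct the three maps $\phi_k$, $\psi_k$, and $h_k$ explicitly from the splitting $j$ and then verify directly that they satisfy the defining identities of a contraction, namely $\phi_k \psi_k = \id$, $\psi_k \phi_k = \id - (\delta h_k + h_k \delta)$, together with the side conditions $\phi_k h_k = 0$, $h_k \psi_k = 0$, $h_k h_k = 0$. First I would record the basic two-term case $k=1$. The splitting $j$ gives a direct-sum decomposition $U \cong V \oplus j(U/V)$, hence a decomposition $V[1] \oplus U \cong V[1] \oplus V \oplus j(U/V)$ in which the differential $\delta$ is the identity isomorphism $V[1] \xrightarrow{\cong} V$ and is zero on $j(U/V)$. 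On this piece the contraction onto $(U/V, 0)$ is immediate: $\phi_1 = \pr_{U/V}$, $\psi_1 = j$, and $h_1$ is the degree $(-1)$ map that is the inverse isomorphism $V \xrightarrow{\cong} V[1]$ on the $V$-summand and zero elsewhere. All five contraction identities are trivial to check at this level, and one sees directly that $h_1$ and $\psi_1$ depend on $j$ while $\phi_1$ is just the canonical projection.

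The main step is to promote this $k=1$ contraction to the $k$-th symmetric power. Here I would invoke the standard \emph{symmetrized tensor-trick / side-conditions} construction: given a contraction $(\phi_1,\psi_1,h_1)$ of a complex onto a second complex, its $k$-fold symmetric tensor power carries a canonical induced contraction. Concretely, set $\phi_k = S^k(\phi_1)$ and $\psi_k = S^k(\psi_1)$, and define the homotopy by the symmetrized Gauss--Manin-type formula
\[
h_k \;=\; \frac{1}{k}\sum_{p=0}^{k-1}\, S^{p}(\psi_1\phi_1)\,\cdot\, h_1 \,\cdot\, S^{k-1-p}(\id),
\]
interpreted on symmetric tensors by summing $h_1$ applied to one factor while $\psi_1\phi_1$ is applied to $p$ chosen factors and $\id$ to the rest, with the combinatorial normalization that makes the telescoping work. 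Because $\delta$ is a (co)derivation of the symmetric coalgebra structure and the $k=1$ side conditions $\phi_1 h_1 = h_1\psi_1 = h_1 h_1 = 0$ hold, the standard computation shows the induced triple again satisfies all the contraction identities; the telescoping sum in $\delta h_k + h_k\delta$ collapses to $\id - S^k(\psi_1\phi_1) = \id - \psi_k\phi_k$. Since $\phi_k$ is built solely from $\phi_1 = \pr_{U/V}$, which is independent of $j$, while $\psi_k$ and $h_k$ are built from $\psi_1$ and $h_1$, the dependence claim follows immediately.

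The step I expect to be the main obstacle is bookkeeping the combinatorics in the homotopy identity $\psi_k\phi_k = \id - (\delta h_k + h_k\delta)$ on a symmetric tensor of mixed degree, since $\delta$ acts as a graded derivation and one must track signs coming from moving $\delta$ and $h_1$ past factors in $V[1] \oplus U$ and verify that the intermediate ``mixed'' terms cancel in pairs, leaving only the desired telescoping. A clean way to sidestep most of this is to observe that $V[1]\oplus U$ with $\delta$ is, as a complex, the direct sum of the acyclic complex $(V[1]\xrightarrow{\cong} V)$ and the complex $(j(U/V),0)$ with zero differential; the symmetric power $S_A^k$ of a direct sum splits as $\bigoplus_{a+b=k} S^a_A(V[1]\oplus V)\otimes_A S^b_A(j(U/V))$, and the Künneth-type splitting reduces the problem to the contractibility of $S^a_A(V[1]\oplus V)$ for $a\geq 1$, for which an explicit chain homotopy (the normalized symmetric version of the $k=1$ homotopy) is straightforward to write down and verify. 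This reformulation makes the projectivity hypothesis on $U/V$ transparent: it guarantees the sequence~\eqref{SES of VS} splits and that the symmetric powers behave well under the decomposition, so that no completion or flatness subtleties arise.
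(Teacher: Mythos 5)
Your overall strategy is the same as the paper's: build the $k=1$ contraction explicitly from the splitting $j$ (your $\phi_1,\psi_1,h_1$ agree with the paper's up to the global sign of $h$ coming from the convention $\psi\phi=\id+h\delta+\delta h$ versus $\id-(h\delta+\delta h)$), then extend to the $k$-th symmetric power by a normalized homotopy. The issue is that the explicit formula you give for $h_k$ is wrong. Decompose $V[1]\oplus U\cong V[1]\oplus V\oplus j(U/V)$ and take a decomposable symmetric tensor with $p$ factors in $V[1]$, $q$ factors in $V$, and $r$ factors in $j(U/V)$. In your sum $\frac{1}{k}\sum_{p'}S^{p'}(\psi_1\phi_1)\cdot h_1\cdot S^{k-1-p'}(\id)$, a term is nonzero only when $h_1$ hits one of the $q$ factors in $V$ and every factor receiving $\psi_1\phi_1$ lies in $j(U/V)$ (since $\psi_1\phi_1$ kills $V[1]$ and $V$ and is the identity on $j(U/V)$); hence each ``$h_1$ applied to the $i$-th $V$-factor'' term appears with coefficient $2^{r}/k$ rather than the required $1/(p+q)$. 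Concretely, for $k=3$ and $v\odot\overline{u}_1\odot\overline{u}_2$ with $v\in V$, $\overline{u}_i\in j(U/V)$, one has $\delta=0$ and $\psi_3\phi_3=0$ on this element, so the homotopy identity forces $\delta h_3=\id$ there; but your formula yields
\begin{equation*}
h_3(v\odot\overline{u}_1\odot\overline{u}_2)=\tfrac{4}{3}\,v[1]\odot\overline{u}_1\odot\overline{u}_2,
\qquad
\delta h_3(v\odot\overline{u}_1\odot\overline{u}_2)=\tfrac{4}{3}\,v\odot\overline{u}_1\odot\overline{u}_2\neq v\odot\overline{u}_1\odot\overline{u}_2 .
\end{equation*}
So the ``standard computation'' does not go through with this $h_k$; the unordered symmetrization with $\psi_1\phi_1$-insertions overcounts.

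The fix is exactly what the paper writes and what your own third paragraph gestures at: there are no $\psi_1\phi_1$-insertions at all, one applies $h_1$ to each $V$-factor in turn, and one normalizes by the number $p+q$ of factors lying in the \emph{acyclic} summand $V[1]\oplus V$ (with the Koszul sign $(-1)^p$ from moving $h_1$ past the $V[1]$-factors), i.e.\
\begin{equation*}
h_k=\frac{(-1)^p}{p+q}\sum_{i=1}^{q}\,(\cdots\odot h_1(v_{p+i})\odot\cdots)
\end{equation*}
on a tensor in normal form. Your proposed reduction via $S^k_A(W\oplus W')\cong\bigoplus_{a+b=k}S^a_A(W)\otimes_A S^b_A(W')$ with $W=V[1]\oplus V$ acyclic and $W'=j(U/V)$ carrying zero differential is a correct and clean way to see why this is the right normalization (the homotopy lives entirely on the $S^a_A(W)$ tensor factor and is normalized by $a=p+q$); it is essentially the paper's normal-form bookkeeping phrased as a K\"unneth splitting. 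Had you carried that reduction out instead of the formula in your second paragraph, the proof would be complete.
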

\begin{proof}
	Via the splitting $j \colon U/V \to U$, we have an isomorphism $U \cong V \oplus U/V$ of $A$-modules. Denote by $\pr_V \colon U \to V$ the associated projection onto $V$.
	Define three $A$-linear maps as follows:
	\begin{align*}
	\phi_1 \colon V[1] \oplus U &\to U/V, & \phi_1((v[1],u)) &= \pr_{U/V}(u),  \\
    \psi_1 \colon U/V &\to V[1] \oplus U, & \psi_1(\overline{u}) &= (0, j(\overline{u})), \\
    h_1 \colon V[1] \oplus U &\to (V[1] \oplus U)[1],  & h_1((v[1],u)) &= (-\pr_V(u)[1], 0).
	\end{align*}
	It is easy to see that the tripe $(\phi_1,\psi_1,h_1)$ defines a contraction:
	\[
	\begin{tikzcd}
	\big(V[1] \oplus U,\delta\big) \arrow[loop left, distance=2em, start anchor={[yshift=-1ex]west}, end anchor={[yshift=1ex]west}]{}{h_1} \arrow[r,yshift = 0.7ex, "\phi_1"] & (U/V,0) \arrow[l,yshift = -0.7ex, "\psi_1"].
	\end{tikzcd}
	\]
Applying the tensor trick~\cites{Manetti, Berglund} to the above contraction $(\phi_1,\psi_1,h_1)$, we obtain the desired contraction~\eqref{Eq: contraction of vector spaces}, where
	\begin{align*}
	\phi_k((v_1[1],u_1) \odot \cdots \odot (v_k[1],u_k)) &= \phi_1(v_1[1],u_1) \odot \cdots \odot \phi_1(v_k[1],u_k), \\
	\psi_k(\overline{u}_1 \odot \cdots \odot \overline{u}_k) &= \psi_1(\overline{u}_1) \odot \cdots \odot \psi_1(\overline{u}_k),
	\end{align*}
	for any $(v_i[1],u_i) \in V[1] \oplus U, \overline{u}_i \in U/V, 1 \leq i \leq k$, and
	\begin{align*}
	h_k (v_1[1] \odot \cdots \odot v_p[1] & \otimes v_{p+1} \odot \cdots \odot v_{p+q} \otimes \overline{u}_{p+q+1} \odot \cdots \odot \overline{u}_{k}) \\
	=  \frac{(-1)^p}{p+q}\sum_{i=1}^{q} v_1[1] \odot \cdots \odot v_p[1] \odot  h_1(v_{p+i}) & \otimes v_{p+1} \odot \cdots \odot \widehat{v_{p+i}} \odot \cdots \odot v_{p+q} \otimes \overline{u}_{p+q+1} \odot \cdots \odot \overline{u}_{k}  ,
	\end{align*}
	for any $p,q \geq 0, 0 < p+q \leq k$ and any $v_1[1],\cdots v_p[1] \in V[1],  v_{p+1}, \cdots, v_{p+q} \in V, \overline{u}_{p+q+1}, \cdots, \overline{u}_{k} \in U/V$.
\end{proof}

\emph{Step 3 -- The desired contraction}. We are now ready to complete the
\begin{proof}[Proof of Proposition~\ref{prop: coalgebra contraction on STF}]
It suffices to show that for any $k \geq 1$, there is a contraction
\begin{equation}\label{Eq: contraction of symmetric k-vector fields}
	\begin{tikzcd}
	\Big(\Gamma\left(S^kT_{F[1]}\right), \LdF\Big) \arrow[loop left, distance=2em, start anchor={[yshift=-1ex]west}, end anchor={[yshift=1ex]west}]{}{H_k} \arrow[r,yshift = 0.7ex, "\Phi_k"] & \Big(\OmegaF\left(S^k B\right) , d^{S^kB}_F\Big) \arrow[l,yshift = -0.7ex, "\Psi_k"],
	\end{tikzcd}
	\end{equation}
where $d_F^{S^kB}$ is the Chevalley-Eilenberg differential of the $F$-module $S^k B$.

Applying  Lemma~\ref{Lem: contraction of vector spaces} to the  $R$-module $U:= \Gamma({\TkM})$ and its   submodule $V:= \Gamma(F)$,   we  obtain a contraction of $R$-modules
  \[
   	\begin{tikzcd}
	\Big(\Gamma\big(S^k(F[1] \oplus {\TkM})\big), \delta\Big) \arrow[loop left, distance=2em, start anchor={[yshift=-1ex]west}, end anchor={[yshift=1ex]west}]{}{h_k} \arrow[r,yshift = 0.7ex, "\phi_k"] & \big(\Gamma\left(S^k B\right) , 0\big). \arrow[l,yshift = -0.7ex, "\psi_k"]
	\end{tikzcd}
  \]

The $\Omega_F$-linear extension of this contraction gives rise to a contraction of $\Omega_F$-modules
\begin{equation}\label{Eq: contraction of delta}
\begin{tikzcd}
	\Big(\Omega_F\big(S^k(F[1] \oplus {\TkM})\big), \delta\Big) \arrow[loop left, distance=2em, start anchor={[yshift=-1ex]west}, end anchor={[yshift=1ex]west}]{}{h_k} \arrow[r,yshift = 0.7ex, "\phi_k"] & \Big(\Omega_F(S^k B) , 0\Big) \arrow[l,yshift = -0.7ex, "\psi_k"].
\end{tikzcd}
\end{equation}
Here the differential $\delta$ is given by~\eqref{Eq: Def of delta}.

Observe that $d_{\nabla^{\bas}} - R_{\widetilde{\nabla}^F}$ is a perturbation of $\delta$.
By the definition of the operators ${R_{\widetilde{\nabla}^F}}$, $h_k$, and the basic $F$-connection on $F[1]$~\eqref{Eq: basic connection on F[1]}, it is easy to see that
\[
   h_k \left(d_{\nabla^{\bas}} - R_{\widetilde{\nabla}^F}\right) = - \left(d_{\nabla^{\bas}} - R_{\widetilde{\nabla}^F}\right) h_k \colon \Omega_F\left(S^k(F[1] \oplus {\TkM})\right) \to \Omega_F\left(S^k(F[1] \oplus {\TkM})\right).
\]
Combining with the side conditions $\phi_k  h_k=0, h_k \psi_k = 0$ and $h_k^2 = 0$, we have
\begin{align*}
 \phi_k  \left(d_{\nabla^{\bas}} - R_{\widetilde{\nabla}^F}\right) h_k &= - \phi_k h_k \left(d_{\nabla^{\bas}} - R_{\widetilde{\nabla}^F}\right) = 0, \\
 h_k  \left(d_{\nabla^{\bas}} - R_{\widetilde{\nabla}^F}\right)  \psi_k &= - \left(d_{\nabla^{\bas}} - R_{\widetilde{\nabla}^F}\right)  h_k  \psi_k = 0, \\
 h_k \left(d_{\nabla^{\bas}} - R_{\widetilde{\nabla}^F}\right) h_k &= -h_k^2 \left(d_{\nabla^{\bas}} - R_{\widetilde{\nabla}^F}\right) = 0.
\end{align*}
Thus, the maps $\phi_k, \psi_k, h_k$, and the perturbation $d_{\nabla^{\bas}} - R_{\widetilde{\nabla}^F}$ satisfy the constraints in~\eqref{Eq: perturb constraints}.
Applying the perturbation Lemma~\ref{Lem: OPT} to the contraction~\eqref{Eq: contraction of delta} and the perturbation $d_{\nabla^{\bas}} - R_{\widetilde{\nabla}^F}$, we obtain a new contraction
\[
\begin{tikzcd}
	\Big(\Omega_F\big(S^k(F[1] \oplus {\TkM})\big), D_{\nabla^F} = \delta+ d_{\nabla^{\bas}} - R_{\widetilde{\nabla}^F}\Big) \arrow[loop left, distance=2em, start anchor={[yshift=-1ex]west}, end anchor={[yshift=1ex]west}]{}{h^\prime_k} \arrow[r,yshift = 0.7ex, "\phi^\prime_k"] & \Big(\Omega_F(S^k B) , d_B^\prime\Big) \arrow[l,yshift = -0.7ex, "\psi^\prime_k"],
\end{tikzcd}
\]
where
\begin{align*}
  \phi^\prime_k &= \sum_{l =0}^{\infty}\phi_k \left(\left(d_{\nabla^{\bas}} - R_{\widetilde{\nabla}^F}\right) h_k\right)^l = \phi_k + \sum_{l=1}^\infty \big(\phi_k   \left(d_{\nabla^{\bas}} - R_{\widetilde{\nabla}^F}\right) h_k\big)  \big(\left(d_{\nabla^{\bas}} - R_{\widetilde{\nabla}^F}\right) h_k\big)^{l-1} = \phi_k, \\
  \psi^\prime_k &= \sum_{l=0}^{\infty}\left(h_k \left(d_{\nabla^{\bas}} - R_{\widetilde{\nabla}^F}\right)\right)^l \psi_k = \psi_k + \sum_{l=1}^{\infty} \left(h_k \left(d_{\nabla^{\bas}} - R_{\widetilde{\nabla}^F}\right)\right)^{l-1} \big(h_k  \left(d_{\nabla^{\bas}} - R_{\widetilde{\nabla}^F}\right)  \psi_k\big) = \psi_k, \\
  h_k^\prime &= \sum_{l=0}^{\infty} \left(h_k  \left(d_{\nabla^{\bas}} - R_{\widetilde{\nabla}^F}\right)\right)^l  h_k = h_k + \sum_{l=1}^{\infty} \left(h_k \left(d_{\nabla^{\bas}} - R_{\widetilde{\nabla}^F}\right)\right)^{l-1} \big(h_k  \left(d_{\nabla^{\bas}} - R_{\widetilde{\nabla}^F}\right)  h_k\big) = h_k,
  \end{align*}
  and the new differential on $\Omega_F(S^k B)$ coincides with the Chevalley-Eilenberg differential of the $F$-module $S^k B$:
  \begin{align*}
  d_B^\prime &= \phi^\prime_k \left(d_{\nabla^{\bas}} - R_{\widetilde{\nabla}^F}\right)  \psi_k = \phi_k  d_{\nabla^{\bas}} \psi_k = d_F^{S^k B}.
\end{align*}
Here we have used the fact that the basic $F$-connection $\nabla^{\bas}$ on $T_\k M$ defined by Equation~\eqref{Eq: basic connection on tm} extends the Bott $F$-connection $\nabla^{\Bott}$.
Hence, we obtain a contraction of $\Omega_F$-modules
\[
 \begin{tikzcd}
	\Big(\Omega_F\big(S^k(F[1] \oplus {\TkM})\big), D_{\nabla^F}\Big) \arrow[loop left, distance=2em, start anchor={[yshift=-1ex]west}, end anchor={[yshift=1ex]west}]{}{h_k} \arrow[r,yshift = 0.7ex, "\phi_k"] & \Big(\Omega_F(S^k B) , d_F^{S^k B}\Big) \arrow[l,yshift = -0.7ex, "\psi_k"].
\end{tikzcd}
\]
Combining with the isomorphism in~\eqref{Eq: STF[1] as representation UTH}, we obtain the desired contraction~\eqref{Eq: contraction of symmetric k-vector fields}.
\end{proof}
\begin{remark}
In fact, the contraction~\eqref{Eq: contraction in CXX} does not depend on the choice of the $F$-connection $\widetilde{\nabla}^F$ on $F$ (see~\cites{CXX,Luca} for the explicit construction without choosing such an $F$-connection).
Let us consider the $k=1$ case, that is, the contraction of vector fields on $F[1]$.
Under the identification~\eqref{Eqt:splitXFone}, the left $\OmegaF$-module $\Gamma(T_{F[1]})$ is generated by two types of derivations on $\Omega_F$:
\begin{equation}\label{Eq: generators of TF[1]}
\{\widehat{u} \mid  u \in\sections{T_\k M}\}, \quad\mbox{and} \quad \{\iota_a \mid a \in \sections{F} \},
\end{equation}
which are of homogeneous degrees $0$ and $(-1)$, respectively.
Here $\widehat{u} \in \Gamma(T_{F[1]}) \cong \Der(\Omega_F)$ maps $R$ to $R$ and linear functions $\xi \in \Gamma(F^\vee)$ on $F[1]$ to linear functions. More precisely, for any $f \in R, \xi \in \Gamma(F^\vee)$, and $a \in \Gamma(F)$, we have
\begin{align}\label{Eq: Def of horizontal lifting}
\widehat{u}(f) &= u(f), & \langle \widehat{u}(\xi), a\rangle = u \langle \xi, a \rangle - \langle u, \nabla_u^F a \rangle.
\end{align}
Restricting to the case $k=1$,   the contraction in~\eqref{Eq: contraction in CXX} is determined by the following simple formulas:
	\begin{align}
	\Phi(\iota_a) &= 0,	& \Phi(\widehat{u}) &= \pr_B(u),   \notag \\
    \Psi(b) &= \widehat{j(b)},  & & \label{Eq: Psifork=1} \\
	H(\iota_a) &= 0,      &  H(\widehat{u}) &= -\iota_{\pr_F(u)}. \notag
	\end{align}	
\end{remark}

\begin{remark}
Given a splitting $j$ of the short exact sequence~\eqref{SES} and a $T_\k M$-connection $\nabla^F$ on $F$, one has an isomorphism
\begin{equation}\label{Eqt:SXXFoneijk}
\Gamma\left(ST_{F[1]}\right) \cong \bigoplus_{i,j,k\geq 0}\OmegaF\left(S^i F \otimes  S^j B \otimes S^kF[1]\right).
\end{equation}
It is simple to see that $H$ satisfies the following condition:
\begin{equation}\label{Eqt:Hijkto}
	H\left(\OmegaF\left(S^i F \otimes S^j B  \otimes S^kF[1] \right)\right) \subseteq
	\OmegaF\left(S^{i-1} F \otimes S^j B  \otimes S^{k+1} F[1]\right).
\end{equation}
\end{remark}

\subsubsection{Two PBW isomorphisms}
We now recall the Poincar\'{e}-Birkhoff-Witt isomorphism for the graded manifold $F[1]$~\cite{LS} and that for the Lie pair $({\TkM}, F)$~\cite{LGSX}. The construction of both PBW isomorphisms needs \emph{a priori} certain connections.

We first introduce a special affine connection on the graded manifold ${F[1]}$.
A triple $\big(j, \widetilde{\nabla}^F, \nabla^B\big)$ consists of the following data:
\begin{compactenum}
	\item  a splitting $j\colon B \rightarrow {\TkM}$ of the short exact sequence~\eqref{SES};
	\item  a torsion-free $F$-connection $\widetilde{\nabla}^F$ on $F$;
	\item  a linear connection $\nabla^B$ on $B$ extending the Bott $F$-connection.
\end{compactenum}
\begin{lemma}\label{lem: connection on E}
	Any triple $\big(j,\widetilde{\nabla}^F, \nabla^B\big)$ induces a linear connection $\nabla^E$ on the vector bundle $E:= {\TkM} \oplus F[1]$ over $M$.
\end{lemma}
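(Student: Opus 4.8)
The plan is to construct $\nabla^E$ as a direct sum of linear connections on the two summands of $E = \TkM \oplus F[1]$, after first decomposing $\TkM$ itself by means of the splitting $j$. No deep idea is needed: the content is simply to check that the three given pieces of data glue into a single honest $\TkM$-connection.

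First I would manufacture a $\TkM$-connection on $F$ out of $\widetilde{\nabla}^F$ and $j$. This is precisely the connection $\nabla^F$ recorded in \eqref{Eq: nablaF}, namely $\nabla^F_u a = \widetilde{\nabla}^F_{\pr_F(u)} a + \eth_{\pr_B(u)} a$ for $u \in \Gamma(\TkM)$ and $a \in \Gamma(F)$, where the second term is the Bott-type $B$-connection $\eth$ of \eqref{Eq: Bott B-connection}. One verifies directly that it is $R$-linear in $u$ and satisfies the Leibniz rule in $a$, hence is a genuine linear connection on $F$. Transporting along the degree shift then yields a connection $\nabla^{F[1]}$ on $F[1]$ via $\nabla^{F[1]}_u(a[1]) := (\nabla^F_u a)[1]$; here one must use the full $\TkM$-connection $\nabla^F$ rather than $\widetilde{\nabla}^F$, since the latter only differentiates along $F$-directions.

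Next, the splitting gives a bundle isomorphism $\TkM \cong F \oplus j(B)$ whose projections are $\pr_F$ and $j \circ \pr_B$. I would define a connection $\nabla^{\TkM}$ on $\TkM$ by using $\nabla^F$ on the $F$-summand and $\nabla^B$ on the $j(B)$-summand, i.e. $\nabla^{\TkM}_u(a + j(b)) = \nabla^F_u a + j(\nabla^B_u b)$, where $\nabla^B$ is the given full $\TkM$-connection on $B$ extending the Bott $F$-connection. Finally I set $\nabla^E := \nabla^{\TkM} \oplus \nabla^{F[1]}$, so that $\nabla^E_u(w + a[1]) = \nabla^{\TkM}_u w + (\nabla^F_u a)[1]$ for $w \in \Gamma(\TkM)$ and $a[1] \in \Gamma(F[1])$.

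The verification that $\nabla^E$ is a linear connection is then immediate from the general fact that a direct sum of connections is a connection: $R$-linearity in $u$ and the Leibniz rule $\nabla^E_u(f s) = u(f)\,s + f\,\nabla^E_u s$ for $f \in R$ both hold because they hold summand by summand. Thus the only step requiring attention is the bookkeeping of the identification $\TkM \cong F \oplus j(B)$, and there is no genuine obstacle to existence. I would finally remark that the torsion-freeness of $\widetilde{\nabla}^F$ and the Bott-extension property of $\nabla^B$ are \emph{not} needed for this lemma; they are imposed because they are exactly what makes the basic curvature of $\nabla^F$ (Lemma~\ref{lem: special connection on F}) and the PBW isomorphism for $F[1]$ built from $\nabla^E$ well-behaved in the constructions that follow.
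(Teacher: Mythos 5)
Your construction is exactly the paper's: form the $\TkM$-connection $\nabla^F$ on $F$ from $(j,\widetilde{\nabla}^F)$ as in \eqref{Eq: nablaF}, combine it with $\nabla^B$ via the splitting to get $\nabla^{\TkM}_u v = \nabla^F_u \pr_F(v) + \nabla^B_u \pr_B(v)$, shift to get $\nabla^{F[1]}$, and take the direct sum. Your closing remark that torsion-freeness of $\widetilde{\nabla}^F$ and the Bott-extension property of $\nabla^B$ play no role in this lemma (only in Lemma~\ref{lem: special connection on F} and the PBW constructions downstream) is also accurate.
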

\begin{proof}
	Recall that the pair $\big(j, \widetilde{\nabla}^F\big)$ determines a ${\TkM}$-connection $\nabla^F$ on $F$ defined as in~\eqref{Eq: nablaF}.
	Together with the linear connection $\nabla^B$ on $B$, we obtain a ${\TkM}$-connection \footnote{This linear connection on ${\TkM}$ is called an adapted connection in~\cite{Luca}.} $\nabla^{{\TkM}}$ on ${\TkM}$ defined by
	\[
	\nabla^{{\TkM}}_u v = \nabla^F_{u} \pr_F(v) + \nabla^B_u \pr_B(v),
	\]
	for any $u, v \in \Gamma({\TkM})$.
	Consider the graded vector bundle $E := {\TkM} \oplus  F[1]$ over $M$. The ${\TkM}$-connection $\nabla^F$ on $F$ induces a linear connection $\nabla^{F[1]}$ on the graded vector bundle $F[1]$ over $M$.
	Then $\nabla^{E} :=  \nabla^{{\TkM}}+ \nabla^{F[1]}$ defines a linear connection on $E$.
\end{proof}

\begin{proposition}\label{prop: pullback connection}
  Any triple $\big(j, \widetilde{\nabla}^F, \nabla^B\big)$ induces an affine connection $\nabla$ on the graded manifold $F[1]$.
\end{proposition}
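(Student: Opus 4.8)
The plan is to upgrade the linear connection $\nabla^E$ on the vector bundle $E = \TkM \oplus F[1]$ furnished by Lemma~\ref{lem: connection on E} into a connection on the tangent bundle $T_{F[1]}$ over the graded manifold $F[1]$. The mechanism is to pull $\nabla^E$ back along the bundle projection $\pi\colon F[1]\to M$ and then transport it through the canonical identification of $T_{F[1]}$ with the pullback bundle $\pi^\ast E$ coming from the splitting of~\eqref{SES of tangent bundle on F[1]}.

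First I would recall that the pair $(j,\widetilde{\nabla}^F)$ determines the $\TkM$-connection $\nabla^F$ on $F$ via~\eqref{Eq: nablaF}, and that this $\nabla^F$ splits the short exact sequence~\eqref{SES of tangent bundle on F[1]}, yielding the degree-zero isomorphism of $\OmegaF$-modules
\[
\Theta\colon \Gamma(T_{F[1]}) \xrightarrow{\cong} \OmegaF(F[1]\oplus \TkM) = \Gamma(\pi^\ast E)
\]
recorded in~\eqref{Eqt:splitXFone}, under which the horizontal lift $\widehat{u}$ corresponds to $\pi^\ast u$ and the vertical lift $\iota_a$ to $\pi^\ast(a[1])$. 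Next, pulling $\nabla^E$ back along $\pi$ produces a genuine connection $\pi^\ast\nabla^E$ on $\pi^\ast E$ over $F[1]$, pinned down on pullback sections by
\[
(\pi^\ast\nabla^E)_X(\pi^\ast s) = \pi^\ast\big(\nabla^E_{\pi_\ast X}\, s\big), \qquad X\in\Gamma(T_{F[1]}),\ s\in\Gamma(E),
\]
and extended to all of $\Gamma(\pi^\ast E)$ by the graded Leibniz rule. Transporting this through $\Theta$, I would then define
\[
\nabla_X Y := \Theta^{-1}\big((\pi^\ast\nabla^E)_X(\Theta Y)\big), \qquad X,Y\in\Gamma(T_{F[1]}).
\]
Since $\Theta$ is a degree-zero $\OmegaF$-linear isomorphism and $\pi^\ast\nabla^E$ is $\OmegaF$-linear in the direction slot and satisfies the graded Leibniz rule $(\pi^\ast\nabla^E)_X(f\,t) = X(f)\,t + (-1)^{\abs{X}\abs{f}} f\,(\pi^\ast\nabla^E)_X t$ in the section slot, the transported operator $\nabla$ inherits both properties verbatim, so it is an affine connection on $F[1]$.

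The verification above is essentially automatic, so the only real work is the sign bookkeeping of the graded pullback and checking that $\pi_\ast$ behaves as expected on the two families of generators. Concretely, using $\pi_\ast\widehat{u} = \pi^\ast u$ and $\pi_\ast\iota_a = 0$ together with the decomposition $\nabla^E = \nabla^{\TkM} + \nabla^{F[1]}$ from the proof of Lemma~\ref{lem: connection on E}, one reads off the explicit action of $\nabla$ on the generators~\eqref{Eq: generators of TF[1]} of the $\OmegaF$-module $\Gamma(T_{F[1]})$:
\[
\nabla_{\widehat{u}}\widehat{v} = \widehat{\nabla^{\TkM}_u v}, \qquad \nabla_{\widehat{u}}\iota_a = \iota_{\nabla^F_u a}, \qquad \nabla_{\iota_a}\widehat{v} = 0, \qquad \nabla_{\iota_a}\iota_b = 0,
\]
the last two vanishing precisely because $\iota_a$ is $\pi$-vertical. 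These formulas double as the final consistency check and are exactly the input needed for the PBW isomorphism $\pbw\colon \Gamma(ST_{F[1]})\to\D(F[1])$ used later. The main obstacle is thus not conceptual but the careful tracking of Koszul signs when extending the pullback connection across the odd fibre directions of $F[1]$.
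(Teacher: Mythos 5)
Your proposal is correct and follows essentially the same route as the paper: identify $T_{F[1]}$ with $\pi^\ast E$ via the splitting induced by $\nabla^F$, pull back $\nabla^E$ along $\pi$, and transport the resulting connection through the isomorphism. The explicit generator formulas you derive agree with the paper's table~\eqref{Eqt:tableofpullbackconn}; you merely spell out the pullback-connection construction that the paper leaves implicit.
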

\begin{proof}
Recall that given the pair $\big(j,\widetilde{\nabla}^F\big)$, the linear connection $\nabla^F$ on $F$ defined by Equation~\eqref{Eq: nablaF} induces an isomorphism of graded vector bundles over the graded manifold $F[1]$
\begin{align}\label{Eq: splitfor tangent bundle}
T_{F[1]} &\cong \pi^\ast({\TkM}\oplus F[1]) = \pi^\ast(E).
\end{align}
The linear connection $\nabla^E$ on $E$ in Lemma~\ref{lem: connection on E} induces a pullback $T_{F[1]}$-connection $\pi^\ast(\nabla^E)$ on the pullback bundle $\pi^\ast(E)$ over the graded manifold $F[1]$. By the isomorphism~\eqref{Eq: splitfor tangent bundle}, this pullback connection $\pi^\ast(\nabla^E)$ determines an affine connection $\pullbackconn$ on the graded manifold $F[1]$.
\end{proof}
The affine connection in the above proposition is called the \emph{pullback connection} on the graded manifold $F[1]$ associated with the chosen triple $\big(j,\widetilde{\nabla}^F, \nabla^B\big)$.
Using the identification~\eqref{Eqt:splitXFone}, we obtain an explicit expression of the affine connection
\[
\pullbackconn \colon  \Gamma\left(T_{F[1]}\right)\times \Gamma\left(T_{F[1]}\right) \to \Gamma\left(T_{F[1]}\right),
\]
in terms of the generators of $\Gamma\left(T_{F[1]}\right)$ as in~\eqref{Eq: generators of TF[1]}:
\begin{equation}\label{Eqt:tableofpullbackconn}
\left\{
\begin{array}{ll}
\pullbackconn_{\iota_a}(\iota_{a^\prime})=0, &   \\
&\\
\pullbackconn_{\iota_a}(\widehat{u})=0, &    \\
&\\
\pullbackconn_{\widehat{u}}(\iota_a)=\iota_{\nabla^F_u a}, &   \\
&\\
\pullbackconn_{\widehat{u}}(\widehat{u^\prime})=\widehat{\nabla^{\TkM}_{u}{u^\prime}}, &
\end{array}
\right.
\end{equation}
for any $a, a^\prime \in \sections{F}$ and any $u, u^\prime \in \Gamma({\TkM})$.
In particular, we have, for any $b, b^\prime \in \Gamma(B)$,
\begin{equation}\label{Eq: Bconnection and pullback connection}
	\pullbackconn_{\widehat{j(b)}} \widehat{j(b^\prime)} = \widehat{\nabla^B_{j(b)} b^\prime} = \Psi (\nabla^B_{j(b)} b^\prime).
\end{equation}

We will fix a triple $\big(j,\widetilde{\nabla}^F, \nabla^B\big)$ in the sequel.
Equipping the graded manifold $F[1]$ with the connection $\pullbackconn$ as in Proposition~\ref{prop: pullback connection}, we obtain an isomorphism of filtered ${\OmegaF}$-coalgebras
\begin{equation}\label{Eq: pbw}
\operatorname{pbw} \colon  \Gamma\left(ST_{F[1]}\right) \rightarrow \D({F[1]}),
\end{equation}
called the PBW isomorphism for the graded manifold $F[1]$, and defined by the inductive recipe (see~\cite{LS}):
\begin{align*}
\pbw(\omega) &= \omega,  \quad \forall \omega \in \OmegaF \cong C^{\infty}({F[1]}), \\
\pbw(X) &= X, \quad \forall X \in \Gamma(T_ {F[1]}),
\end{align*}
\begin{equation}\label{Eqt:pbw0ton}\mbox{and}\quad
\pbw(X_1 \odot  \cdots \odot X_n)  = \frac{1}{n}\sum_{k=1}^n\epsilon_k \left\{X_k \circ \pbw(X^{\{k\}}) - \pbw\big(\pullbackconn_{X_k}(X^{\{k\}})\big) \right\},
\end{equation}
where $\epsilon_k = (-1)^{\abs{X_k}(\abs{X_1}+\cdots+\abs{X_{k-1}})}$ and $X^{\{k\}} = X_1 \odot \cdots \odot X_{k-1}\odot X_{k+1}\odot\cdots \odot X_n$.

Meanwhile, according to~\cite{LGSX}, the pair $\big(j,\nabla^B\big)$ in the chosen triple determines an isomorphism of $R$-coalgebras
\[
\overline{\pbw}\colon \Gamma(SB) \rightarrow \frac{\D(M)}{\D(M)\Gamma(F)} = \D(B),
\]
called the PBW isomorphism for the Lie pair $({\TkM}, F)$ therein, which can be defined inductively as follows:
\begin{align*}
\overline{\pbw}(f) &= f,  \quad \forall f \in R, \\
\overline{\pbw}(b) &= j(b) , \quad \forall b \in \Gamma(B),    \\
\mbox{and }\quad \overline{\pbw}(b_1 \odot  \cdots \odot  b_n) &= \frac{1}{n}\sum_{k=1}^n \left\{ j(b_k)  \circ \overline{\pbw}(b^{\{k\}}) - \overline{\pbw}\big(\nabla^B_{ j(b_k) }(b^{\{k\}})\big) \right\},
\end{align*}
where $b^{\{k\}} = b_1 \odot  \cdots \odot  b_{k-1} \odot  b_{k+1} \odot  \cdots \odot  b_n$. Via an $\OmegaF$-linear extension, we obtain an isomorphism of ${\OmegaF} $-coalgebras, denoted by the same symbol by abuse of notation
\begin{equation}\label{Eq: overlinepbw}
\overline{\pbw}\colon \OmegaF(SB) \rightarrow \OmegaF(\D(B)).
\end{equation}
Now we have two PBW isomorphisms $\pbw$~\eqref{Eq: pbw} and $\overline{\pbw}$~\eqref{Eq: overlinepbw}. Next we investigate how they are related.
To do so, we introduce a map
\begin{align}\label{Eq: phinatural}
\Phi_\natural \colon \D({F[1]})  &\to \OmegaF(\D(B)) = \OmegaF \otimes_R\frac{\D(M)}{\D(M)\Gamma(F) },  \\
D  &\mapsto \overline{\pi_\ast (D)}, \notag
\end{align}
where $\pi_\ast\colon \D({F[1]}) \to \OmegaF \otimes_R \D(M)$ is the restriction map determined by
\begin{equation}\label{Equ: restriction pi}
\pi_\ast(D)(f) = D(\pi^\ast f),
\end{equation}
for any $f \in R$, and $\overline{\pi_\ast (D)} \in \Omega_F(\D(B))$ denotes the class of $\pi_\ast (D)$ in $\OmegaF \otimes_R \frac{\D(M)}{\D(M)\Gamma(F)}$.

\begin{lemma}\label{lem:Phinaturalchainmap}
The map $\Phi_\natural$ is a morphism of filtered dg coalgebras over the commutative dg algebra $(\Omega_F, d_F)$ from $(\D({F[1]}), \llbracket d_F,- \rrbracket)$ to $(\OmegaF(\D(B)), \dF^\U)$.
\end{lemma}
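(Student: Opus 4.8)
The plan is to verify, one at a time, the four features bundled into the statement: left $\OmegaF$-linearity, compatibility with the filtrations, compatibility with the comultiplications, and --- the one genuinely substantial point --- compatibility with the differentials $\llbracket d_F,-\rrbracket$ and $\dF^\U$. The first two are immediate from the definition. From \eqref{Equ: restriction pi} one has $(\omega\cdot D)(\pi^\ast f)=\omega\wedge D(\pi^\ast f)$ for all $\omega\in\OmegaF$, $D\in\D(F[1])$, $f\in R$, so $\pi_\ast$, hence $\Phi_\natural$, is left $\OmegaF$-linear; and since $\pi_\ast(D)$ acts on $R$ as an operator of order no larger than that of $D$, it carries $\D^{\leq k}(F[1])$ into $\OmegaF\otimes_R\D^{\leq k}(M)$, while passing to the quotient by $\D(M)\Gamma(F)$ only lowers order, so the filtration \eqref{Eq: filtration on DF[1]} is preserved.

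\emph{Morphism of coalgebras.} I would first show that $\pi_\ast\colon\D(F[1])\to\OmegaF\otimes_R\D(M)$ intertwines the coproduct \eqref{Eqt:DeltaDFone} with the $\OmegaF$-linear extension of \eqref{Eq: coproduct on DM}. Since $\D(M)\otimes_R\D(M)$ embeds into the bidifferential operators on $M$, an element of $\OmegaF\otimes_R\D(M)\otimes_R\D(M)$ is determined by its action, as an $\OmegaF$-valued bidifferential operator, on pairs $f\otimes g$ of base functions; it therefore suffices to evaluate $\Delta(\pi_\ast D)$ and $(\pi_\ast\otimes\pi_\ast)(\Delta D)$ there. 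Using $\pi^\ast(fg)=\pi^\ast f\wedge\pi^\ast g$ together with $\Delta(D)(\xi\otimes\eta)=D(\xi\wedge\eta)$, both reduce to $D(\pi^\ast f\wedge\pi^\ast g)$. Composing with the quotient map $\D(M)\to\D(B)$, which is a morphism of $R$-coalgebras by \eqref{Eq: coproduct on DB}, shows $\Phi_\natural$ is a morphism of $\OmegaF$-coalgebras.

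\emph{Compatibility with the differentials.} Set $\Theta:=\Phi_\natural\circ\llbracket d_F,-\rrbracket-\dF^\U\circ\Phi_\natural$. Because $\llbracket d_F,-\rrbracket$ and $\dF^\U$ are graded derivations over $(\OmegaF,d_F)$ and $\Phi_\natural$ is $\OmegaF$-linear, a short computation shows $\Theta$ is $\OmegaF$-linear; and since $\Phi_\natural$ is a coalgebra morphism while both differentials are coderivations, $\Theta$ is a $\Phi_\natural$-coderivation. The two PBW isomorphisms $\pbw\colon\Gamma(ST_{F[1]})\xrightarrow{\cong}\D(F[1])$~\cite{LS} and $\overline{\pbw}\colon\OmegaF(SB)\xrightarrow{\cong}\OmegaF(\D(B))$~\cite{LGSX} identify both source and target with cofree cocommutative $\OmegaF$-coalgebras, the cogenerators of the target being $\OmegaF\otimes_R\Gamma(B)$. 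By the universal property of cofree coalgebras, a $\Phi_\natural$-coderivation into the target is determined by its corestriction to these cogenerators, so $\Theta=0$ reduces to the single first-order identity $\operatorname{pr}_1\circ\Phi_\natural\circ\llbracket d_F,-\rrbracket=\operatorname{pr}_1\circ\dF^\U\circ\Phi_\natural$, where $\operatorname{pr}_1$ projects onto $\OmegaF\otimes_R\Gamma(B)$. To check the latter I would combine the explicit formula \eqref{Eq: dFU}, the expression $\pi_\ast(d_F)=\sum_i\alpha^i\otimes u_i$ of \eqref{Eq:piastdF}, and the $F$-action $a\cdot\overline u=\overline{a\circ u}$ on $\D(B)$. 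The crucial algebraic input is that $\overline{u\circ u_i}=0$ for $u_i\in\Gamma(F)$, whence $\overline{u_i\circ u}=\overline{[u_i,u]}$, which at the level of transversal symbols is exactly the Bott connection $\nabla^{\Bott}$; on the generators $\widehat{j(b)}$ and $\iota_a$ of $\Gamma(T_{F[1]})$ this recovers $\dF^\U(\overline{j(b)})=\sum_i\alpha^i\otimes\nabla^{\Bott}_{u_i}b$, and the general identity follows by the same calculation.

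\emph{Main obstacle.} The genuine difficulty is precisely this last compatibility with the differentials. The subtlety is that $\pi_\ast$ retains only the horizontal action of a differential operator and discards its vertical ($\partial_\xi$) part, so it is \emph{not} a chain map into $\OmegaF\otimes_R\D(M)$ for any naive differential; what must be pinned down is that the correction term $(-1)^{\abs{\xi}}\xi\wedge\overline{\pi_\ast(d_F)\circ u}$ in $\dF^\U$ reconstructs exactly the lost vertical information through the Chevalley--Eilenberg/Bott structure of the $F$-module $\D(B)$. The cofree-coalgebra reduction is what turns this into a finite, first-order verification rather than an induction on the order of $D$.
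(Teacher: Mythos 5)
Your first three verifications ($\OmegaF$-linearity, compatibility with the filtrations, compatibility with the coproducts) are fine and essentially coincide with the paper's. The gap is in the step you yourself single out as the substantial one. The cofree-coalgebra reduction is legitimate as far as it goes: $\Theta=\Phi_\natural\circ\llbracket d_F,-\rrbracket-\dF^\U\circ\Phi_\natural$ is indeed an $\OmegaF$-linear $\Phi_\natural$-coderivation, and (granting that $\pbw$ and $\overline{\pbw}$ identify source and target with conilpotent symmetric coalgebras) it is determined by its corestriction $\pr_1\circ\Theta\colon\D(F[1])\to\OmegaF\otimes_R\Gamma(B)$. But this corestriction is a map defined on \emph{all} of $\D(F[1])$: the universal property projects onto the first-order part of the \emph{target}; it does not restrict the \emph{source} to first-order operators. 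Since $\pr_1\circ\Theta$ is merely $\OmegaF$-linear --- it is neither an algebra map nor itself a coderivation --- its vanishing on the generators $\widehat{u}$, $\iota_a$ of $\Gamma(T_{F[1]})$ says nothing about its values on operators of order $\geq 2$, and ``the general identity follows by the same calculation'' is precisely the unproved content. To close the gap you would still need an induction on the order of $D$ or a computation valid for all $D$ at once; at that point the coalgebra machinery buys you nothing.

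The paper's proof does the all-orders computation directly in a few lines and never needs the coderivation reduction: write $\pi_\ast(D)=\omega\otimes D_0$; then $\pi_\ast(d_F\circ D)=d_F\omega\otimes D_0+(-1)^{\abs{\omega}}\omega\wedge(\pi_\ast(d_F)\circ D_0)$, which is exactly $\dF^\U(\Phi_\natural(D))$ by \eqref{Eq: dFU}, while $\pi_\ast(D\circ d_F)\subseteq\OmegaF\otimes_R\bigl(\D(M)\Gamma(F)\bigr)$ because $d_F(\pi^\ast f)=\sum_i\alpha^i\,\pi^\ast(u_if)$ with $u_i\in\Gamma(F)$, so that term dies in the quotient defining $\Phi_\natural$. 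This last observation is the precise form of your remark that the correction term ``reconstructs the lost vertical information''; I would recommend isolating it and running the short direct computation in place of the cofreeness detour.
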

\begin{proof}
 In fact, for any $D \in \D(F[1])$ and $f, g \in R$, we have
 \begin{align*}
   \pi_\ast^{\otimes 2}(\Delta(D))(f \otimes g) &= \Delta(D)(\pi^\ast f \otimes \pi^\ast g) = D(\pi^\ast(fg)),
 \end{align*}
 and
 \begin{align*}
   \Delta(\pi_\ast(D))(f \otimes g) &= \pi_\ast(D)(fg) = D(\pi^\ast(fg)).
 \end{align*}
 It follows that $\pi_\ast^{\otimes 2} \circ \Delta = \Delta \circ \pi_\ast$, i.e., $\pi_\ast  \colon \D(F[1]) \to \Omega_F(\D(M))$ is a morphism of $\Omega_F$-coalgebras.
Meanwhile,  the projection $\Omega_F(\D(M)) \to \Omega_F(\D(B))$ is a morphism of $\Omega_F$-coalgebras by definition.  It thus follows that $\Phi_\natural$ is a morphism of $\Omega_F$-coalgebras as well.

We now show that $\Phi_\natural$ intertwines the two differentials:
\begin{equation*}
\Phi_\natural \left(\llbracket \dF,D \rrbracket\right) = \dF^\U \left(\Phi_\natural(D)\right),\qquad \forall D \in \D({F[1]}).
\end{equation*}
Without loss of generality, we may assume that $\pi_\ast (D)=\omega\otimes D_0$ for some homogeneous $\omega\in \OmegaF$ and $D_0\in \D(M)$. Then we have, for any $f \in R$,
	\[
	  D(\pi^\ast f) = \pi_\ast(D) (f) = D_0(f) \omega,
	\]
and therefore
	\begin{eqnarray*}
		\pi_\ast(\dF \circ D) (f) &=&  (\dF \circ D)(\pi^\ast f)=\dF(D_0(f)\omega)\\
                                                 &=& \dF(D_0 f) \wedge \omega + (D_0 f) \dF\omega,
	\end{eqnarray*}
which implies that
	\[
	\pi_\ast(\dF \circ D)= \dF\omega \otimes D_0 + (-1)^{\abs{\omega}}\omega \wedge (\pi_\ast(\dF) \circ D_0).
	\]
Meanwhile, it follows from Equation~\eqref{Eq:piastdF} that
\[
  \pi_\ast (D \circ d_F) \subseteq \Omega_F\otimes_R (\D(M)\Gamma(F)).
\]
Therefore, we have
	\begin{eqnarray*}
		\Phi_\natural \left(\llbracket \dF, D \rrbracket\right)  &=& \overline{\pi_\ast(\dF \circ D) - (-1)^{\abs{D}}\pi_\ast(D \circ \dF)} \\
	                                           &=& \overline{\pi_\ast(\dF \circ D)} \\
                                               &=& \dF\omega \otimes \overline{D_0}+ (-1)^{\abs{\omega}}\omega \wedge \overline{\pi_\ast(\dF) \circ D_0} \qquad (\text{by Equation~\eqref{Eq: dFU}})\\
                                               &=& {\dF^\U}( \omega\otimes \overline{D_0}) =  {\dF^\U}\left(\Phi_\natural(D)\right).
	\end{eqnarray*}
This completes the proof.
\end{proof}

We also need some properties of the restriction map $\pi_\ast \colon \D(F[1]) \to \Omega_F \otimes_R \D(M)$. For simplicity, we introduce the notation:
\begin{equation}\label{Eq: ijk}
[i, j, k] := \Gamma (S^i F \otimes S^j B  \otimes S^k F[1]),
\end{equation}
for any $i, j, k \geq 0$. Then the $\Omega_F$-module $\Gamma(ST_{F[1]})$ is generated by these $[i,j,k]$ under the identification~\eqref{Eqt:SXXFoneijk}.
\begin{lemma}
  For any $u \in \Gamma(T_\k M), v[1] \in \Gamma(F[1])$ and any $i,j,k \geq 0$,
\begin{align}\label{Eq: piast preserves product}
  &\pi_\ast(\widehat{u} \circ \pbw[i,j,k]) = u \circ \pi_\ast(\pbw[i,j,k]), \\
  &\pi_\ast(I(v[1]) \circ \pbw[i,j,k]) = 0.  \label{Eq: piastonF[1]}
\end{align}
Here $\widehat{u} \in \mathfrak{X}(F[1])$ is the horizontal lifting of $u$ as defined in Equation~\eqref{Eq: Def of horizontal lifting}, and $I \colon \Gamma(F[1]) \hookrightarrow \Gamma(T_{F[1]})$ is the canonical vertical lifting as in~\eqref{Eq: Def of I}.
\end{lemma}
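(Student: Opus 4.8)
The plan is to reduce both identities to a single structural statement about how $\pbw[i,j,k]$ acts on a pullback function $\pi^\ast f$, and then to read off the two formulas from the defining relation \eqref{Equ: restriction pi}. The two elementary inputs are: the horizontal lifting satisfies $\widehat u(\pi^\ast f) = u(f) = \pi^\ast(uf) \in R$ by \eqref{Eq: Def of horizontal lifting}, so $\widehat u$ preserves $R = \Omega_F^0$; whereas the vertical field $\iota_v = I(v[1])$ annihilates pullbacks, $\iota_v(\pi^\ast f) = 0$, since contraction lowers form degree. Together with \eqref{Equ: restriction pi}, for any $Y \in \Gamma(T_{F[1]})$ and $D \in \D(F[1])$ we have $\pi_\ast(Y \circ D)(f) = Y(D(\pi^\ast f))$, so everything hinges on evaluating $D(\pi^\ast f)$ for $D = \pbw[i,j,k]$.

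The key claim, which I would establish by induction on the total order $n = i+j+k$ using the recursion \eqref{Eqt:pbw0ton}, is that every monomial occurring in $\pbw(\xi)$ for $\xi \in [i,j,k]$ is a composition of generators containing exactly $k$ vertical factors of type $\iota$. The inductive step uses the connection table \eqref{Eqt:tableofpullbackconn}: the correction term $\pbw(\nabla_{X_k}X^{\{k\}})$ leaves the vertical count unchanged, because $\nabla_{\iota_a}(-) = 0$, while $\nabla_{\widehat u}\iota_a = \iota_{\nabla^F_u a}$ sends a vertical factor to a vertical factor and $\nabla_{\widehat u}\widehat{u'} = \widehat{\nabla^{\TkM}_u u'}$ keeps a horizontal factor horizontal. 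From this I draw two conclusions. If $k \geq 1$, then in each monomial $Z_1 \circ \cdots \circ Z_m$, reading right to left the horizontal liftings keep the argument in $R$ while the rightmost vertical factor annihilates it; hence $\pbw[i,j,k](\pi^\ast f) = 0$ for all $f$, i.e. $\pi_\ast(\pbw[i,j,k]) = 0$. If $k = 0$, the same bookkeeping shows $\pbw[i,j,0]$ maps $\pi^\ast R$ into $R$, so the $\Omega_F$-component of $\pi_\ast(\pbw[i,j,0])$ sits in form degree $0$, giving $\pi_\ast(\pbw[i,j,0]) \in \D(M) \subseteq \Omega_F \otimes_R \D(M)$.

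Granting the claim, both identities follow at once. For \eqref{Eq: piastonF[1]} I compute $\pi_\ast(I(v[1]) \circ \pbw[i,j,k])(f) = \iota_v(\pbw[i,j,k](\pi^\ast f))$, which vanishes when $k \geq 1$ because the inner term is $0$, and when $k = 0$ because the inner term lies in $R$, which $\iota_v$ kills. For \eqref{Eq: piast preserves product} I compute $\pi_\ast(\widehat u \circ \pbw[i,j,k])(f) = \widehat u(\pbw[i,j,k](\pi^\ast f))$; when $k \geq 1$ both sides are $0$, and when $k = 0$ the argument $\pbw[i,j,0](\pi^\ast f)$ lies in $R$, so $\widehat u$ acts on it as $u$ by \eqref{Eq: Def of horizontal lifting}, yielding $u(\pi_\ast(\pbw[i,j,0])(f)) = (u \circ \pi_\ast(\pbw[i,j,0]))(f)$, which is exactly the right-hand side, with $u$ acting by left composition on the $\D(M)$-factor.

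The only genuinely delicate point is the invariance of the vertical count under the PBW recursion; everything else is bookkeeping. I expect the main obstacle to be organizing this induction cleanly: $\nabla_{X_k}$ acts as a derivation on the symmetric word $X^{\{k\}}$, and one must verify term by term that none of the replacements $X_l \mapsto \nabla_{X_k} X_l$ creates or destroys a vertical factor --- precisely the content of \eqref{Eqt:tableofpullbackconn} --- separating the case $X_k = \iota_a$, where the whole correction vanishes, from $X_k = \widehat u$. A secondary subtlety, in the case $k = 0$, is to ensure that $\pi_\ast(\pbw[i,j,0])$ genuinely lands in $\D(M)$ rather than merely in $\Omega_F \otimes_R \D(M)$; this is forced by the fact that the coefficients $f \mapsto \pbw[i,j,0](\pi^\ast f)$ take values in $R$, so the form part must have degree $0$.
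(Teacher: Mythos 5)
Your proof is correct, and your structural claim (each monomial of $\pbw[i,j,k]$ is a composition of generators with exactly $k$ vertical factors, preserved under the PBW recursion because $\pullbackconn_{\iota_a} = 0$ and $\pullbackconn_{\widehat{u}}$ sends vertical to vertical and horizontal to horizontal) does hold and delivers both identities. The paper organizes the same induction differently: it only extracts from the recursion the coarser containment \eqref{Eq: pbwijk}, uses it together with the base cases $\pi_\ast(\pbw[1,0,0]) = \Gamma(F)$, $\pi_\ast(\pbw[0,1,0]) = \Gamma(B)$, $\pi_\ast(\pbw[0,0,1]) = 0$ to conclude that every $\pbw[i,j,k]$ is a \emph{projectable} differential operator (i.e., $\pi_\ast(\pbw[i,j,k]) \subseteq \D(M)$, projectable operators being closed under composition), and then obtains both identities in one stroke from the multiplicativity $\pi_\ast(D_1 \circ D_2) = \pi_\ast(D_1)\circ\pi_\ast(D_2)$ for projectable $D_1, D_2$, with no case split on $k$. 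What your finer bookkeeping buys is that you prove strictly more at this stage: your vertical-count argument already yields $\pi_\ast(\pbw[i,j,k]) = 0$ for all $k \geq 1$, which the paper does \emph{not} claim here but establishes separately in the following lemma (Equation~\eqref{Eqt:Phinaturalpbwijk2}) by a second induction on $i+j$ that consumes the two identities of the present lemma. So your route merges the two lemmas at the cost of a more delicate monomial-level induction, while the paper's route keeps this lemma minimal by isolating projectability as the only input. Two cosmetic points: in the $k=0$ case you should say $\pbw[i,j,0]$ maps $\pi^\ast R$ into $\pi^\ast R = \Omega_F^0$ (rather than ``into $R$''), and for the proof itself you only need ``at least one vertical factor when $k \geq 1$, none when $k=0$'' rather than the exact count, though the exact count is what the induction naturally produces.
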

\begin{proof}
We first prove that $\pbw[i,j,k] \subseteq \D(F[1])$ are all projectable differential operators, that is,
\[
\pi_\ast(\pbw[i,j,k]) \subseteq \D(M).
\]
Note that
\[
 \pbw[1,0,0] = \widehat{\Gamma(F)}, \quad \pbw[0,1,0] = \widehat{\Gamma(B)}, \quad \pbw[0,0,1] = I(\Gamma(F[1])).
\]
Thus, we have
\[
 \pi_\ast(\pbw[1,0,0]) = \Gamma(F) \subseteq \D(M), \quad \pi_\ast(\pbw[0,1,0]) = \Gamma(B) \subseteq \D(M), \quad \pi_\ast(\pbw[0,0,1]) = 0.
\]
By Equations~\eqref{Eqt:tableofpullbackconn} and~\eqref{Eqt:pbw0ton}, one has
\begin{align}\label{Eq: pbwijk}
    \pbw [i,j,k] &\subseteq \widehat{\Gamma(F)}\circ \pbw[i-1,j,k] + \widehat{\Gamma(B)}\circ \pbw[i,j-1,k] + I(\Gamma(F[1])) \circ \pbw[i,j,k-1] \notag \\
	&\qquad +\pbw[i-1,j,k] + \pbw [i,j-1,k].
\end{align}
Since projectable differential operators on $F[1]$ are closed under composition, it follows by induction that $\pbw[i,j,k] \subseteq \D(F[1])$ are projectable for any $i,j,k \geq 0$.
Hence, we have
\begin{align*}
  \pi_\ast(\widehat{u} \circ \pbw[i,j,k]) &= \pi_\ast(\widehat{u}) \circ \pi_\ast(\pbw[i,j,k]) = u \circ \pi_\ast(\pbw[i,j,k]), \quad \mbox{and} \\
  \pi_\ast(I(v[1]) \circ \pbw[i,j,k]) &= \pi_\ast(I(v[1])) \circ \pi_\ast(\pbw[i,j,k]) = 0.
\end{align*}
\end{proof}

The first relation between these two PBW isomorphisms is given by the following
\begin{lemma}[\cite{Luca}*{Remark 32}]\label{Eq: Phinatural and Psi}
The PBW isomorphisms $\pbw$~\eqref{Eq: pbw} and $\overline{\pbw}$~\eqref{Eq: overlinepbw} are related as follows:
\[
	\overline{\pbw} = \Phi_\natural \circ {\pbw}	\circ \Psi \colon \Omega_F(SB) \to \Omega_F(\D(B)),
\]
where $\Phi_\natural$ is defined by~\eqref{Eq: phinatural} and $\Psi$ is the inclusion in the contraction~\eqref{Eq: contraction in CXX}.
\end{lemma}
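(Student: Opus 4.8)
The plan is to prove the identity by induction on the symmetric degree $n$, exploiting the parallel between the two recursive definitions of the PBW maps. Since the four maps $\overline{\pbw}$, $\Phi_\natural$, $\pbw$ and $\Psi$ are all $\Omega_F$-linear, it suffices to check that $T:=\Phi_\natural\circ\pbw\circ\Psi$ and $\overline{\pbw}$ agree on the untwisted part $\Gamma(S^nB)\subset\Omega_F(S^nB)$ for every $n$. The base cases $n=0,1$ are immediate: on $\Gamma(S^0B)=R$ each map restricts to the identity, while on $\Gamma(B)$ the remark following Proposition~\ref{prop: coalgebra contraction on STF} gives $\Psi(b)=\widehat{j(b)}$, so that $\pbw(\Psi(b))=\widehat{j(b)}$ and $T(b)=\overline{\pi_\ast(\widehat{j(b)})}=\overline{j(b)}=\overline{\pbw}(b)$.

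For the inductive step I would first record that $\Psi$ is multiplicative on symmetric powers: the explicit formula for $\psi_k$ in Lemma~\ref{Lem: contraction of vector spaces} (left unchanged by the perturbation producing the contraction~\eqref{Eq: contraction in CXX}) yields $\Psi(b_1\odot\cdots\odot b_n)=\widehat{j(b_1)}\odot\cdots\odot\widehat{j(b_n)}$, which moreover lands in the summand $[0,n,0]$ of~\eqref{Eq: ijk}. Feeding this into the $\pbw$ recursion~\eqref{Eqt:pbw0ton}---where every sign $\epsilon_k$ is trivial because the $\widehat{j(b_i)}$ have degree $0$---and using the connection identity~\eqref{Eq: Bconnection and pullback connection} together with the Leibniz rule for $\nabla$ on $\Gamma(ST_{F[1]})$ to rewrite $\nabla_{\widehat{j(b_k)}}\!\big(\Psi(b^{\{k\}})\big)=\Psi\big(\nabla^B_{j(b_k)}(b^{\{k\}})\big)$, I obtain
\[
\pbw\big(\Psi(b_1\odot\cdots\odot b_n)\big)=\frac1n\sum_{k=1}^n\Big\{\widehat{j(b_k)}\circ\pbw\big(\Psi(b^{\{k\}})\big)-\pbw\big(\Psi(\nabla^B_{j(b_k)}(b^{\{k\}}))\big)\Big\}.
\]

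Applying $\Phi_\natural$ and invoking its $\Omega_F$-linearity turns the second summand into $T(\nabla^B_{j(b_k)}(b^{\{k\}}))$, which equals $\overline{\pbw}(\nabla^B_{j(b_k)}(b^{\{k\}}))$ by the induction hypothesis. The crux is the first summand. Since $\pbw(\Psi(b^{\{k\}}))\in\pbw[0,n-1,0]$ is projectable, the key relation~\eqref{Eq: piast preserves product} lets me pull $\widehat{j(b_k)}$ out of $\pi_\ast$:
\[
\pi_\ast\big(\widehat{j(b_k)}\circ\pbw(\Psi(b^{\{k\}}))\big)=j(b_k)\circ\pi_\ast\big(\pbw(\Psi(b^{\{k\}}))\big).
\]
Passing to the quotient $\D(B)=\D(M)/\D(M)\Gamma(F)$, which is a left $\D(M)$-module, this reads $\Phi_\natural(\widehat{j(b_k)}\circ\pbw(\Psi(b^{\{k\}})))=j(b_k)\circ T(b^{\{k\}})$. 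By induction $T(b^{\{k\}})=\overline{\pbw}(b^{\{k\}})$, so $T(b_1\odot\cdots\odot b_n)$ becomes exactly the right-hand side of the recursion defining $\overline{\pbw}(b_1\odot\cdots\odot b_n)$, closing the induction.

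I expect the main obstacle to lie in the bookkeeping for the connection term: one must verify carefully that the image $\Psi(SB)$ is stable under $\nabla_{\widehat{j(b)}}$ and that $\nabla$ acts as a derivation compatibly with the multiplicativity of $\Psi$, so that the two connection terms coincide on the nose rather than merely up to lower-order corrections. A secondary point requiring care is the identification of left composition by $j(b_k)$ in $\D(M)$ with the left $\D(M)$-module action on $\D(B)$ after projection, together with its interaction with the $\Omega_F$-tensor factor; both of these reduce cleanly to~\eqref{Eq: piast preserves product} once the projectability of $\pbw[0,n-1,0]$ has been established.
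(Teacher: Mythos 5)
Your proposal is correct and follows essentially the same route as the paper: induction on $n$ via the two PBW recursions, reduction by $\Omega_F$-linearity, the identity $\pullbackconn_{\widehat{j(b_k)}}\widehat{j(b^\prime)}=\Psi(\nabla^B_{j(b_k)}b^\prime)$ from~\eqref{Eq: Bconnection and pullback connection} for the connection term, and the projectability of $\pbw[0,n,0]$ together with~\eqref{Eq: piast preserves product} to pull $j(b_k)$ out of $\pi_\ast$. The only difference is that you spell out the Leibniz-rule step for $\pullbackconn$ on symmetric products, which the paper leaves implicit.
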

\begin{proof}
Since all maps involved are $\OmegaF$-linear, it suffices to prove the identity
\begin{equation}\label{temp1}
	(\Phi_\natural\circ {\pbw}	\circ\Psi) (b_1 \odot \cdots \odot b_n) = \overline{\pbw}(b_1 \odot  \cdots   \odot  b_n),
\end{equation}
for any  $b_1, \cdots, b_n \in \Gamma(B)$.
We argue by induction. The $n=1$ case is obvious as both sides of Equation~\eqref{temp1} yields $b_1$. Now assume that~\eqref{temp1} holds for some $n \geq 1$.
Then we proceed to the $n+1$ case.
Note that
\[
(\pbw \circ \Psi)\left(b_1 \odot \cdots \odot b_n\right) = \pbw\left(\widehat{j(b_1)} \odot \cdots \odot \widehat{j(b_n)}\right) \in \pbw[0,n,0] \subseteq \D(F[1]).
\]
Thus, we have
\begin{align*}
	&\quad\; (\Phi_\natural\circ \pbw \circ\Psi) \left(b_1 \odot \cdots \odot b_{n+1}\right) \\
   &= (\Phi_\natural\circ \pbw)\left(\widehat{j(b_1)} \odot \cdots \odot \widehat{j(b_{n+1})}\right) \qquad\qquad\qquad \left(\text{by Equation~\eqref{Eqt:pbw0ton}}\right) \\
	  &= \frac{1}{n+1}\sum_{k=1}^{n+1} \Phi_\natural \left\{ \widehat{j(b_k)} \circ \pbw (\widehat{j(b)^{\{k\}}}) - \pbw(\pullbackconn_{\widehat{j(b_k)}} \widehat{j(b)^{\{k\}}}) \right\} \\
     &= \frac{1}{n+1}\sum_{k=1}^{n+1} \overline{\pi_\ast \left(\widehat{j(b_{k})} \circ \pbw  (\widehat{j(b)^{\{k\}}})\right)} - \Phi_\natural \left(\pbw( \pullbackconn_{\widehat{j(b_k)}} \widehat{j(b)^{\{k\}}}) \right) \qquad\qquad\left(\text{by Equation~\eqref{Eq: piast preserves product}}\right)\\
      &= \frac{1}{n+1}\sum_{k=1}^{n+1} \overline{j(b_k) \circ \pi_\ast \left(\pbw (\widehat{j(b)^{\{k\}}})\right)} - \Phi_\natural \left(\pbw(\pullbackconn_{\widehat{j(b_k)}} \widehat{j(b)^{\{k\}}}) \right) \qquad\qquad \left(\text{by Equation~\eqref{Eq: Bconnection and pullback connection}}\right)\\
      &= \frac{1}{n+1}\sum_{k=1}^{n+1}  j(b_k) \circ (\Phi_\natural\circ \pbw \circ\Psi) \left(b^{\{k\}}\right) - (\Phi_\natural\circ \pbw \circ\Psi) \left(\nabla^B_{j(b_k)} b^{\{k\}}\right)  \left(\text{by the inductive assumption}\right)\\
      &= \frac{1}{n+1}\sum_{k=1}^{n+1} j(b_{k}) \circ \overline{\pbw}\left( b^{\{k\}}\right) - \overline{\pbw} \left(\nabla^B_{j(b_k)}  b^{\{k\}}\right) \\
      &= \overline{\pbw}\left(b_1 \odot \cdots \odot b_{n+1}\right).
\end{align*}
Here $\widehat{j(b)^{\{k\}}}$ denotes $\widehat{j(b_1)} \odot \cdots \odot \widehat{j(b_{k-1})} \odot \widehat{j(b_{k+1})} \odot \cdots \odot \widehat{j(b_{n+1})}$.
\end{proof}

\subsubsection{Concluding via homological perturbation}
Conjugating the contraction~\eqref{Eq: contraction in CXX} via the two filtered isomorphisms $\pbw$~\eqref{Eq: pbw} and $\overline{\pbw}$~\eqref{Eq: overlinepbw}, we obtain the filtered contraction of the $\Omega_F$-module $\D(F[1])$:
\begin{equation}\label{Eq: contraction on DF[1] before perturb}
  	\begin{tikzcd}
	\Big(\D(F[1]), d_0 := \pbw \circ L_{d_F} \circ \pbw^{-1}\Big) \arrow[loop left, distance=2em, start anchor={[yshift=-1ex]west}, end anchor={[yshift=1ex]west}]{}{H_0} \arrow[r,yshift = 0.7ex, "\Phi_0"] & \left(\OmegaF(\D(B)), \overline{d}_0 := \overline{\pbw} \circ d^{SB}_F \circ \overline{\pbw}^{-1}\right) \arrow[l,yshift = -0.7ex, "\Psi_0"],
	\end{tikzcd}
\end{equation}
where
\begin{align}\label{Eq: Def of phi0, psi0 and H0}
 \Phi_0 &= \overline{\pbw} \circ \Phi \circ \pbw^{-1}, & \Psi_0 &= \pbw \circ \Psi \circ \overline{\pbw}^{-1}, & H_0 &= \pbw \circ H \circ \pbw^{-1}.
\end{align}

\begin{lemma}
  The inclusion map $\Psi_0$ and the chain homotopy $H_0$ defined above are compatible with the projection $\Phi_\natural$~\eqref{Eq: phinatural} in the following sense:
\begin{align}\label{Eq: Phinatural and psi0}
  \Phi_\natural \circ \Psi_0 &= \id \colon \Omega_F(\D(B)) \to \Omega_F(\D(B)), \\ \label{Eq: phinatural and H0}
  \Phi_\natural  \circ H_0 &= 0 \colon \D(F[1]) \to \Omega_F(\D(B)).
\end{align}
\end{lemma}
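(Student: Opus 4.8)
The plan is to verify the two identities \eqref{Eq: Phinatural and psi0} and \eqref{Eq: phinatural and H0} separately, the first being purely formal and the second requiring an inductive analysis of the PBW map. For the first identity I would simply unwind the definition $\Psi_0 = \pbw \circ \Psi \circ \overline{\pbw}^{-1}$ from \eqref{Eq: Def of phi0, psi0 and H0} and invoke Lemma~\ref{Eq: Phinatural and Psi}, which asserts $\overline{\pbw} = \Phi_\natural \circ \pbw \circ \Psi$. Composing on the right with $\overline{\pbw}^{-1}$ gives $\Phi_\natural \circ \Psi_0 = \Phi_\natural \circ \pbw \circ \Psi \circ \overline{\pbw}^{-1} = \overline{\pbw} \circ \overline{\pbw}^{-1} = \id$, with no further work needed.

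For the second identity $\Phi_\natural \circ H_0 = 0$, since $H_0 = \pbw \circ H \circ \pbw^{-1}$ and $\pbw^{-1}$ is an isomorphism, it suffices to show that $\Phi_\natural \circ \pbw \circ H$ vanishes on $\Gamma(ST_{F[1]})$. The key leverage is the grading property \eqref{Eqt:Hijkto}, which places the image of $H$ inside the sum of the subspaces $\OmegaF(S^{i-1}F \otimes S^j B \otimes S^{k+1} F[1])$ --- in particular every output term carries at least one $F[1]$-factor. Since all maps involved are $\OmegaF$-linear, the whole claim thus reduces to the assertion that
\[
\Phi_\natural(\pbw[i,j,k]) = \overline{\pi_\ast(\pbw[i,j,k])} = 0 \qquad \text{whenever } k \geq 1,
\]
with $[i,j,k] = \Gamma(S^i F \otimes S^j B \otimes S^k F[1])$ as in \eqref{Eq: ijk} and $\Phi_\natural$ computed via \eqref{Eq: phinatural}.

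To establish this vanishing I would argue by induction on the total symmetric degree $n = i + j + k$, expanding $\pbw[i,j,k]$ by means of the recursion \eqref{Eq: pbwijk} and applying $\pi_\ast$ term by term. The contributions $\widehat{\Gamma(F)} \circ \pbw[i-1,j,k]$ and $\widehat{\Gamma(B)} \circ \pbw[i,j-1,k]$ are treated with \eqref{Eq: piast preserves product}, which rewrites them as a vector field of $\Gamma(\TkM)$ composed \emph{on the left} with $\pi_\ast(\pbw[i-1,j,k])$, respectively $\pi_\ast(\pbw[i,j-1,k])$; these inner factors have strictly smaller total degree and still satisfy $k \geq 1$, so by the inductive hypothesis they lie in $\Omega_F \otimes_R \D(M)\Gamma(F)$, and since $\D(M)\Gamma(F)$ is a \emph{left} ideal the left composition keeps them there. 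The bare terms $\pbw[i-1,j,k]$ and $\pbw[i,j-1,k]$ are disposed of directly by induction, while the term $I(\Gamma(F[1])) \circ \pbw[i,j,k-1]$ is annihilated by $\pi_\ast$ on the nose via \eqref{Eq: piastonF[1]}. The base case $[0,0,1]$, where $\pbw[0,0,1] = I(\Gamma(F[1]))$ and $\pi_\ast(I(v[1]))(f) = \iota_v(\pi^\ast f) = 0$, is immediate. Hence every summand has trivial class in $\OmegaF(\D(B))$, giving $\overline{\pi_\ast(\pbw[i,j,k])} = 0$.

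The main obstacle is the bookkeeping in this induction: one must check that the recursion \eqref{Eq: pbwijk} only ever produces terms that either reduce the total degree while retaining $k \geq 1$ (so the inductive hypothesis applies) or place a vertical lift $I(v[1])$ on the far left (so \eqref{Eq: piastonF[1]} applies). The conceptual point is that $\pi_\ast$ kills any differential operator whose PBW expansion carries a purely vertical direction, and the left-ideal structure of $\D(M)\Gamma(F)$ guarantees that this property is stable under left multiplication by the horizontal lifts generated along the recursion. Once this vanishing is in hand, combining it with \eqref{Eqt:Hijkto} yields $\Phi_\natural \circ \pbw \circ H = 0$ and therefore $\Phi_\natural \circ H_0 = 0$.
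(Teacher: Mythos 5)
Your proposal is correct and follows essentially the same route as the paper: identity \eqref{Eq: Phinatural and psi0} falls out of Lemma~\ref{Eq: Phinatural and Psi} exactly as you write, and \eqref{Eq: phinatural and H0} is reduced via \eqref{Eqt:Hijkto} to the vanishing of $\Phi_\natural(\pbw[i,j,k])$ for $k\geq 1$, proved by induction on the recursion \eqref{Eq: pbwijk} together with \eqref{Eq: piast preserves product} and \eqref{Eq: piastonF[1]}. The only (harmless) deviation is bookkeeping: the paper inducts on $i+j$ and shows $\pi_\ast(\pbw[i,j,k])=0$ outright, whereas you induct on $i+j+k$ and establish membership in the left ideal $\Omega_F\otimes_R \D(M)\Gamma(F)$, which equally suffices since $\Phi_\natural$ is the class modulo that ideal.
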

\begin{proof}
  By Lemma~\ref{Eq: Phinatural and Psi}, we have
  \[
    \Phi_\natural \circ \Psi_0 = \Phi_\natural \circ \pbw \circ \Psi \circ \overline{\pbw}^{-1} = \overline{\pbw} \circ \overline{\pbw}^{-1} = \id.
  \]
  We now prove~\eqref{Eq: phinatural and H0}. By the definition of $\Phi_\natural$ and $H_0$, it suffices to show that
  \begin{equation}\label{Eq: piastpbwH=0}
   \pi_\ast  \circ \pbw \circ H = 0 \colon \Gamma(ST_{F[1]}) \to \Omega_F(\D(M)),
  \end{equation}
  where  $\pi_\ast$ is the restriction map defined as in~\eqref{Equ: restriction pi} and $H$ is the homotopy operator in~\eqref{Eq: contraction in CXX}.

 Recall that under the identification~\eqref{Eqt:SXXFoneijk}, one has
 \[
   H\left(\Gamma\left(ST_{F[1]}\right)\right) \subseteq  \bigoplus_{i\geq 1, j, k \geq 0}\OmegaF\left( S^{i-1} F \otimes S^j B  \otimes S^{k+1}F[1] \right)
 \]
 according to~\eqref{Eqt:Hijkto}. Thus, Equation~\eqref{Eq: piastpbwH=0} holds if we could show that
  \[
    \Big(\pi_\ast\circ \pbw \Big) \left(\Omega_F(S^i F \otimes S^j B  \otimes S^kF[1])\right) = 0,
  \]
  for any $i, j \geq 0$ and $k\geq 1$. Since both $\pbw$ and $\pi_\ast$ are $\Omega_F$-linear, it suffices to prove
\begin{equation}\label{Eqt:Phinaturalpbwijk2}
	\pi_\ast(\pbw([i,j,k]))=0,\qquad \forall i,j \geq 0, k \geq 1,
\end{equation}
where the $R$-module $[i,j,k]$ is as in~\eqref{Eq: ijk}. We proceed by induction on $n = i+j\geq 0$. The base case $n=0$, i.e., $(i,j,k) = (0,0,k)$, is obvious, because
\[
\pi_\ast (\pbw([0,0,k]) \subseteq \pi_\ast\big(\underbrace{I(\Gamma(F[1])) \circ \cdots \circ I(\Gamma(F[1]))}_{k  \mbox{ times }}\big) = 0,
\]
by Equation~\eqref{Eq: pbwijk}, where $I \colon \Gamma(F[1]) \hookrightarrow \Gamma(T_{F[1]})$ is the canonical vertical lifting defined in~\eqref{Eq: Def of I}.

Meanwhile, by~\eqref{Eq: pbwijk}, one has
\begin{align*}
  \pi_\ast(\pbw [i,j,k]) &\subseteq \pi_\ast\Big(\widehat{\Gamma(F)}\circ\pbw[i-1,j,k] + \widehat{\Gamma(B)}\circ \pbw[i,j-1,k] +I(\Gamma(F[1])) \circ \pbw[i,j,k-1]\Big) \\
  &\quad +\pi_\ast(\pbw[i-1,j,k]) + \pi_\ast(\pbw [i,j-1,k]) \qquad\qquad  \left(\text{by Equations~\eqref{Eq: piast preserves product} and~\eqref{Eq: piastonF[1]}}\right)\\
    &= \Gamma(F) \circ \pi_\ast(\pbw[i-1,j,k])  + \Gamma(B) \circ \pi_\ast(\pbw[i,j-1,k]) \notag \\
	&\quad +\pi_\ast(\pbw[i-1,j,k]) + \pi_\ast(\pbw [i,j-1,k]),
\end{align*}
It follows that Equation~\eqref{Eqt:Phinaturalpbwijk2}  holds for $i + j = n+1$ if it holds for $i+j = n$. This completes the proof.
\end{proof}

We are now ready to finish the proof of Theorem~\ref{thm: Luca's result}.
\begin{proof}[Proof of Theorem~\ref{thm: Luca's result}]
According to~\cites{MSX, SSX}, the perturbation $\Theta:= \llbracket d_F,- \rrbracket - d_0 \colon \D(F[1]) \to \D(F[1])$ of $d_0$ lowers the filtration in~\eqref{Eq: filtration on DF[1]} by $1$, that is, $\Theta(\D^{\leq k+1}(F[1])) \subseteq \D^{\leq k}(F[1])$.
Applying the homological perturbation Lemma~\ref{Lem:HP} to the filtered contraction~\eqref{Eq: contraction on DF[1] before perturb}, we obtain a new filtered contraction
\[
\begin{tikzcd}
\big(\D(F[1]), \llbracket d_F,- \rrbracket = d_0 + \Theta\big) \arrow[loop left, distance=2em, start anchor={[yshift=-1ex]west}, end anchor={[yshift=1ex]west}]{}{H_\flat} \arrow[r,yshift = 0.7ex, "\Phi_\flat"] & \big(\OmegaF(\D(B)) , \overline{d}_0 + \Theta_\flat\big) \arrow[l,yshift = -0.7ex, "\Psi_\flat"],
\end{tikzcd}
\]
where
\begin{align}
\Phi_\flat  &= \sum_{k\geq0}\Phi_0(\Theta H_0)^k,  &
\Theta_\flat  &= \sum_{k\geq 0}\Phi_0(\Theta H_0)^k \Theta \Psi_0 = \Phi_\flat\Theta\Psi_0, \notag \\
\Psi_\flat  &=\sum_{k\geq0}(H_0 \Theta)^k\Psi_0,  &
H_\flat &= \sum_{k\geq0}H_0(\Theta H_0)^k. \label{Eq: Def of Psi and Hnatural}
\end{align}
First of all, we prove that the perturbed projection $\Phi_\flat$ coincides with the projection $\Phi_\natural$. In fact, using~\eqref{Eq: Phinatural and psi0} and~\eqref{Eq: phinatural and   H0}, one has
\[
  \Phi_\natural  \Psi_\flat = \Phi_\natural  \sum_{k\geq0} (H_0 \Theta)^k\Psi_0 = \Phi_\natural \Psi_0 + \sum_{k\geq 1} \Phi_\natural H_0 \Theta (H_0 \Theta)^{k-1}\Psi_0 = \Phi_\natural \Psi_0 = \id,
\]
and
\[
  \Phi_\natural H_\flat = \Phi_\natural \sum_{k\geq 0}H_0 (\Theta H_0)^k = \sum_{k\geq 0}\Phi_\natural H_0 \Theta  (H_0 \Theta)^{k} = 0.
\]
Thus, we have
\begin{align*}
  \Phi_\natural - \Phi_\flat &= \Phi_\natural (\id - \Psi_\flat \Phi_\flat) = \Phi_\natural  (\llbracket d_F,- \rrbracket  H_\flat + H_\flat  \llbracket d_F, - \rrbracket ) \quad \left(\text{by Lemma~\ref{lem:Phinaturalchainmap}}\right)\\
  &= d_{F}^\U  \Phi_\natural  H_\flat + \Phi_\natural  H_\flat \llbracket d_F, - \rrbracket = 0.
\end{align*}
Finally, since according to Lemma~\ref{lem:Phinaturalchainmap}, $\Phi_\natural \colon (\D(F[1]), \llbracket d_F,- \rrbracket) \to (\Omega_F(\D(B)), d_F^\U)$ is a cochain map and $\Phi_\natural$ is surjective, it follows that the differential $\overline{d}_0 + \Theta_\flat$ must coincide with $d_F^\U$ as well.  Hence, we obtain the desired contraction
\[
 \begin{tikzcd}
\big(\D(F[1]), \llbracket d_F,- \rrbracket\big) \arrow[loop left, distance=2em, start anchor={[yshift=-1ex]west}, end anchor={[yshift=1ex]west}]{}{H_\natural} \arrow[r,yshift = 0.7ex, "\Phi_\natural"] & \big(\OmegaF(\D(B)) , d_F^\U\big) \arrow[l,yshift = -0.7ex, "\Psi_\natural"],
\end{tikzcd}
\]
where $\Psi_\natural = \Psi_\flat$ and $H_\natural = H_\flat$ are defined in~\eqref{Eq: Def of Psi and Hnatural}.
\end{proof}

\begin{remark}
  See~\cite{SVX} for an extension of Theorem~\ref{thm: Luca's result} to arbitrary Lie pairs.
\end{remark}

\subsection{Proof of Theorem~\ref{thm:contraction-on-DM}}\label{Sec: DO for perfect ID}
We now assume that $F$ is perfect and $B \subseteq T_\k M$ is an integrable distribution transversal to $F$. Thus, $(F,B)$ is a matched pair of Lie algebroids, and
  \begin{equation}\label{Eq: CD on F bowtie B}
    \begin{tikzcd}
      F \bowtie B \arrow[r] \arrow[d] & B \arrow[d] \\
             F           \arrow[r]               &  M
    \end{tikzcd}
  \end{equation}
  is a double Lie algebroid in the sense of Mackenzie~\cite{Mac}.
  According to Voronov~\cite{Voronov} (see also~\cite{SXsurvey}*{Theorem 3.3}), $F[1] \bowtie B \to F[1]$ is a dg Lie algebroid, where the dg manifold structures on $F[1] \bowtie B$ and $F[1]$ are induced from the horizontal Lie algebroid structures in~\eqref{Eq: CD on F bowtie B};
  according to Va\u{i}ntrob's theorem~\cite{Vaintrob}, the dg manifold structure on $F[1]$ is exactly $(F[1], d_F)$, while the one on $F[1] \bowtie B$ is induced from the dg module structure on $\Gamma(F[1], F[1] \bowtie B) \cong \Omega_F(B)$ arising from the Chevalley-Eilenberg differential $d_{F}^B$ of the Bott connection $\nabla^{\operatorname{Bott}}$ on $B$.
  Denote by $\mathfrak{B}$ the dg Lie algebroid $F[1] \bowtie B \to F[1]$.
  The anchor map
\[
\rho \colon F[1] \bowtie B \hookrightarrow T_{F[1]}
\]
coincides with the inclusion $\Psi$ in~\eqref{Eq: Psifork=1}.
Explicitly, the anchor map $\rho\colon \Omega_F(B) \to \Gamma(T_{F[1]}) \cong \Der(\Omega_F)$ is defined by
\begin{equation}\label{Eq: anchor map}
 \rho(\xi \otimes b) = \xi \otimes \widehat{b} \in \Gamma(T_{F[1]}) \cong \Der(\Omega_F),
\end{equation}
for any $\xi \in \Omega_F$  and $b \in \Gamma(B)$, where $\widehat{b} \in \Der(\Omega_F)$ is the horizontal lifting via the Bott $B$-connection $\eth$ on $F$ as in~\eqref{Eq: Bott B-connection}.
The Lie bracket on $\Gamma(\mathfrak{B}) = \Omega_F(B)$ is defined by
\[
 \{\xi_1 \otimes b_1, \xi_2 \otimes b_2\} = \xi_1 \wedge \eth_{b_1}\xi_2 \otimes b_2 - (-1)^{\abs{\xi_1}\abs{\xi_2}}\xi_2 \wedge \eth_{b_2} \xi_1 \otimes b_1 + \xi_1 \wedge \xi_2 \otimes [b_1,b_2],
\]
for any homogeneous $\xi_1, \xi_2 \in \Omega_F$ and $b_1,b_2 \in \Gamma(B)$, where $\eth$ is the flat $B$-connection on the vector bundle $\wedge F^\vee$ induced from the Bott $B$-connection $\eth$ on $F$, and $[-,-]$ is the Lie bracket on $\Gamma(B)$.

Consider the universal enveloping algebra $\U(\mathfrak{B})$ of the dg Lie algebroid $\mathfrak{B}$, which is a dg Hopf algebroid over $(\Omega_F, d_F)$. It is clear that $\U(\mathfrak{B}) \cong \Omega_F(\U(B))$ as dg Hopf algebroids, where the multiplication on $\Omega_F(\U(B))$ is defined in~\eqref{Eq: product}, and the comultiplication on $\Omega_F(\U(B))$ is the $\Omega_F$-linear extension of that of the Hopf algebroid $\U(B)$~\cite{Xu}.

Set
\[
  \mu \colon \Omega_F(\U(B)) (\cong \U(\mathfrak{B})) \to \D(F[1]) (\cong \U(T_{F[1]}))
\]
to be the morphism of dg algebras induced from the anchor map $\rho$ in~\eqref{Eq: anchor map}. Then $\mu$ is a morphism of dg Hopf algebroids over the dg algebra $(\Omega_F, d_F)$. Explicitly, one has
\begin{equation}\label{Eq: Psinatural of matched pairs}
  \mu(\xi \otimes b_1 b_2 \cdots b_n) = \xi \rho(b_1) \circ \rho(b_2) \cdots \circ \rho(b_n) = \xi \widehat{b_1} \circ \widehat{b_2} \circ \cdots \circ \widehat{b_n},
\end{equation}
for any $\xi \in \Omega_F$ and $b_1, \cdots, b_n \in \Gamma(B)$.

\begin{lemma}\label{Eq: CD in matched pair}
The following diagram
  \begin{equation*}
  \begin{tikzcd}
   \Gamma(ST_{F[1]}) \ar{r}{\pbw} & \D({F[1]})   \\
  \OmegaF(SB) \ar{r}{\overline{\pbw}} \ar{u}{\Psi}  & \OmegaF(\U(B))  \ar{u}{\mu}
  \end{tikzcd}
  \end{equation*}
commutes, where $\Psi \colon \Omega_F(SB) \to \Gamma(ST_{F[1]})$ is the inclusion map in the contraction~\eqref{Eq: contraction in CXX}. In other words, the map $\mu$ coincides with the inclusion map $\Psi_0$ defined in~\eqref{Eq: Def of phi0, psi0 and H0}.
\end{lemma}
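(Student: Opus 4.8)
The plan is to prove the commutativity of the square directly, that is, to establish
\[
\pbw \circ \Psi = \mu \circ \overline{\pbw} \colon \OmegaF(SB) \longrightarrow \D(F[1]);
\]
the concluding assertion $\mu = \Psi_0$ then drops out immediately by postcomposing with $\overline{\pbw}^{-1}$ and comparing with the definition of $\Psi_0$ in~\eqref{Eq: Def of phi0, psi0 and H0}. Since all four maps are $\OmegaF$-linear, it is enough to check the identity on symmetric words $b_1 \odot \cdots \odot b_n$ with $b_i \in \Gamma(B)$. As $F$ is perfect I take the splitting $j$ to be the inclusion $B \hookrightarrow \TkM$, so the coalgebra inclusion $\Psi$ of~\eqref{Eq: contraction in CXX} sends $b_1 \odot \cdots \odot b_n$ to $\widehat{b_1} \odot \cdots \odot \widehat{b_n}$, with $\widehat{b} = \widehat{j(b)}$ the horizontal lift. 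The claim thus reduces to
\[
\pbw\bigl(\widehat{b_1} \odot \cdots \odot \widehat{b_n}\bigr) = \mu\bigl(\overline{\pbw}(b_1 \odot \cdots \odot b_n)\bigr),
\]
which I would prove by induction on $n$.

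The base cases are immediate: for $n=1$ the left side equals $\pbw(\widehat{b_1}) = \widehat{b_1}$, while the right side equals $\mu(\overline{\pbw}(b_1)) = \mu(j(b_1)) = \widehat{b_1}$ by~\eqref{Eq: Psinatural of matched pairs}. For the inductive step the essential structural input is that $\mu$ is a morphism of associative algebras, being induced from the anchor $\rho = \Psi$ by the universal property of the enveloping algebra; here perfectness is used, since only then is $\D(B) \cong \U(B)$ genuinely associative, so that the composition $j(b_k) \circ \overline{\pbw}(b^{\{k\}})$ occurring in the recursion for $\overline{\pbw}$ is an honest product respected by $\mu$. Applying $\mu$ to that recursion, invoking $\mu(j(b_k)) = \widehat{b_k}$ and the inductive hypothesis $\mu \circ \overline{\pbw} = \pbw \circ \Psi$ on words of length $< n$, the right-hand side becomes
\[
\frac{1}{n}\sum_{k=1}^n \Bigl\{ \widehat{b_k} \circ \pbw\bigl(\Psi(b^{\{k\}})\bigr) - \pbw\bigl(\Psi(\nabla^B_{j(b_k)} b^{\{k\}})\bigr) \Bigr\},
\]
where $b^{\{k\}} = b_1 \odot \cdots \odot b_{k-1} \odot b_{k+1} \odot \cdots \odot b_n$.

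It then remains only to recognize this as the defining recursion~\eqref{Eqt:pbw0ton} for $\pbw(\widehat{b_1} \odot \cdots \odot \widehat{b_n})$. The decisive identity is that $\Psi$ intertwines the covariant derivative $\nabla^B_{j(b_k)}$ on $SB$ with the pullback-connection derivation $\pullbackconn_{\widehat{b_k}}$ on $\Gamma(ST_{F[1]})$: on generators this is precisely~\eqref{Eq: Bconnection and pullback connection}, namely $\pullbackconn_{\widehat{j(b)}}\widehat{j(b')} = \widehat{\nabla^B_{j(b)} b'} = \Psi(\nabla^B_{j(b)} b')$, and it propagates to all symmetric powers because both operators act as derivations of the symmetric product. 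Consequently $\Psi(\nabla^B_{j(b_k)} b^{\{k\}}) = \pullbackconn_{\widehat{b_k}}\bigl(\Psi(b^{\{k\}})\bigr)$; moreover every $\widehat{b_i}$ has degree $0$, so each sign $\epsilon_k$ in~\eqref{Eqt:pbw0ton} equals $1$, and the displayed sum is exactly $\pbw(\widehat{b_1} \odot \cdots \odot \widehat{b_n})$. This closes the induction and proves the square commutes; the statement $\mu = \Psi_0$ follows. I expect the only genuine subtlety to lie in the bookkeeping around $\mu$: confirming that it honestly respects the products appearing in the $\overline{\pbw}$-recursion (which is exactly where associativity of $\U(B)$, hence perfectness, enters) and that the connection-compatibility of $\Psi$ survives the extension from $B$ to the full symmetric algebra.
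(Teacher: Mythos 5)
Your proof is correct and follows essentially the same route as the paper: $\OmegaF$-linear reduction to symmetric words, induction on length, the two PBW recursions, the fact that $\mu$ is an algebra morphism induced by the anchor, and the connection compatibility~\eqref{Eq: Bconnection and pullback connection} extended derivation-wise to symmetric powers. The only cosmetic difference is that you unwind the recursion starting from $\mu\circ\overline{\pbw}$ while the paper starts from $\pbw\circ\Psi$; the intermediate expressions are identical.
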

\begin{proof}
Note that all maps in the above diagram are $\Omega_F$-linear. Hence, it suffices to show that
\begin{equation}\label{Eq: psi and upsi}
  \pbw \left(\Psi(b_1 \odot \cdots \odot b_n)\right) = \mu\left(\overline{\pbw}(b_1 \odot \cdots \odot b_n)\right),
\end{equation}
for any $n \geq 1$ and any $b_1, \cdots, b_n \in \Gamma(B)$. We prove this by induction.
When $n = 1$, one has
\[
\pbw \left( \Psi(b_1) \right) = \mu\left(\overline{\pbw}(b_1)\right) = \widehat{b_1}.
\]
Now assume that Equation~\eqref{Eq: psi and upsi} holds for some $n \geq 1$. Then we have
\begin{align*}
  & \quad\; \pbw\left(\Psi(b_1 \odot  \cdots \odot b_{n+1})\right) \\
   &= \pbw \left( \widehat{b_1} \odot \widehat{b_2} \odot \cdots \odot \widehat{b_{n+1}} \right) \\
    &= \frac{1}{n+1}\sum_{k=1}^{n+1}  \widehat{b_{k}} \circ \pbw \left( \widehat{b^{\{k\}}} \right) - \pbw\left(\pullbackconn_{\widehat{b_k}} \widehat{b^{\{k\}}}\right) \qquad\qquad \left(\text{by Equation~\eqref{Eq: Bconnection and pullback connection}}\right)\\
     &= \frac{1}{n+1}\sum_{k=1}^{n+1} \widehat{b_{k}} \circ \pbw\left( \Psi(b^{\{k\}})\right) - \pbw\left(\Psi (\nabla^B_{b_k} b^{\{k\}})\right)\quad \left(\text{by the inductive assumption}\right)\\
     &= \frac{1}{n+1}\sum_{k=1}^{n+1} \widehat{b_{k}} \circ \mu \left(\overline{\pbw} (b^{\{k\}})\right) - \pbw\left(\Psi  (\nabla^B_{b_k} b^{\{k\}} )\right) \\
      &= \frac{1}{n+1}\sum_{k=1}^{n+1}   \mu \left(b_{k} \circ \overline{\pbw} (b^{\{k\}} )\right) - \mu\left( \overline{\pbw}(\nabla^B_{b_k} b^{\{k\}})\right) \\
      &= \mu \left(\overline{\pbw}(b_1 \odot \cdots \odot b_{n+1})\right).
\end{align*}
Hence, Equation~\eqref{Eq: psi and upsi} holds for $n+1$, which concludes the proof.
\end{proof}

We are now ready to complete the proof of Theorem~\ref{thm:contraction-on-DM}.
\begin{proof}[Proof of Theorem~\ref{thm:contraction-on-DM}]
We prove that the inclusion $\Psi_\natural$ in Theorem~\ref{thm: Luca's result} coincides with the morphism $\mu$ as in~\eqref{Eq: Psinatural of matched pairs}.
We first prove that the inclusion map $\Psi_0$, the chain homotopy $H_0$ defined in~\eqref{Eq: Def of phi0, psi0 and H0} and the filtered perturbation $\Theta = \llbracket d_F,- \rrbracket - d_0 = \llbracket d_F,- \rrbracket - \pbw \circ L_{d_F} \circ \pbw^{-1}$ satisfy the following relation:
\[
H_0 \circ \Theta \circ \Psi_0 = 0 \colon \Omega_F(\U(B)) \to \D(F[1]).
\]
In fact, we have
\begin{align*}
   &\quad\; H_0  \circ \Theta \circ \Psi_0 \\
   &= H_0  \circ (\llbracket d_F,- \rrbracket - d_0)  \circ \Psi_0 \qquad \qquad \;\; \left(\text{since $\Psi_0$ is a cochain map}\right) \\
   &= H_0  \circ \llbracket d_F,- \rrbracket \circ \Psi_0  -  H_0 \circ \Psi_0 \circ \overline{d}_0 \quad \left(\text{by the side condition $H_0\Psi_0 = 0$ and Lemma~\ref{Eq: CD in matched pair}}\right) \\
   &= H_0 \circ \llbracket d_F,- \rrbracket \circ \mu \qquad \qquad \qquad\qquad  \left(\text{since $\mu$ is a cochain map}\right) \\
   &= H_0 \circ \mu \circ d^\U_F  \qquad \qquad \qquad\qquad\qquad  \left(\text{by the definition of $H_0$ in~\eqref{Eq: Def of phi0, psi0 and H0} and Lemma~\ref{Eq: CD in matched pair}}\right)\\
   &= \pbw \circ H \circ \pbw^{-1} \circ \pbw \circ \Psi \circ \overline{\pbw}^{-1} \circ d_F^\U \\
   &= \pbw\circ H \circ \Psi \circ \overline{\pbw}^{-1} \circ d_F^\U  \qquad\qquad \left(\text{by the side condition $H \Psi = 0$}\right)\\
   &= 0.
\end{align*}
 Hence, we have
\[
  \Psi_\natural := \sum_{k\geq 0}(H_0\Theta)^k \Psi_0 = \Psi_0 = \mu \colon \Omega_F(\U(B)) \to \D(F[1])
\]
by Lemma~\ref{Eq: CD in matched pair}. Thus, the inclusion $\Psi_\natural$ coincides with $\mu$, and therefore is a morphism of dg Hopf algebroids.
This completes the proof.
\end{proof}

\begin{remark}
 In general, the projection $\Phi_\natural$ is \textit{not} a morphism of dg Hopf algebroids, since it does not preserve the multiplications. For example, given   $x \in \Gamma(F)$ and $b \in \Gamma(B)$, under the identification~\eqref{Eqt:splitXFone}, one has $\widehat{x} \circ \widehat{b} \in \D(F[1])$  and
 \[
   \Phi_\natural\left(\widehat{x} \circ \widehat{b}\right) = \overline{x \circ b},
 \]
 whereas
 \[
  \Phi_\natural\big(\widehat{x}\big) \circ \Phi_\natural\Big( \widehat{b} \Big) = 0 \circ b = 0.
 \]
\end{remark}

\section{Duflo-Kontsevich type isomorphisms for integrable distributions}\label{Sec: main section}
This section is devoted to the proof of main theorems.
We start by recalling some existing results from~\cites{CXX,LSX,LSXpair}.
Note that, for the notations $\T_{\poly}^p(F[1])$ and $\D_{\poly}^p(F[1])$, we follow those in~\cites{CCN, SXsurvey}, which are also shifted by degree $(+1)$ comparing to~\cite{LSX}. Similarly,  the notations $\T_{\poly}^n(B)$ and $\D_{\poly}^n(B)$ in this paper are up to a degree shift comparing to~\cite{LSXpair}.
\subsection{Duflo-Kontsevich type isomorphism and Hochschild cohomology}
Throughout this section, we assume that $F \subseteq {\TkM}$ is an integrable distribution (not necessarily perfect).
\subsubsection{Atiyah and Todd classes arising from integrable distributions}
There are two types of Atiyah and Todd classes associated to an integrable distribution $F$, which are known to be isomorphic.
The first type is the Atiyah and Todd classes~\cite{MSX} (see also~\cite{LMS}) of the dg manifold $(F[1], d_F)$:
Given an  affine connection $\nabla$ on the graded manifold $F[1]$, consider the degree $(+1)$ map \[
\At_{(F[1],d_F)}^\nabla\colon \Gamma(T_{F[1]} \otimes T_{F[1]}) \to \Gamma(T_{F[1]})
\]
defined by
\[
 \At_{(F[1],d_F)}^\nabla(X,Y) = \llbracket d_F, \nabla_X Y \rrbracket - \nabla_{\llbracket d_F,X \rrbracket} Y - (-1)^{\abs{X}}\nabla_X \llbracket d_F,Y \rrbracket,
\]
for any $X,Y \in \Gamma(T_{F[1]})$.
It is easy to see that $\At_{(F[1],d_F)}^\nabla$ is $\Omega_F$-linear, and therefore is a bundle map $T_{F[1]} \otimes T_{F[1]} \to T_{F[1]}$, which can be identified with a degree $(+1)$ section of the graded vector bundle $T_{F[1]}^\vee \otimes \End(T_{F[1]})$.
It is also simple to check that $\At_{(F[1],d_F)}^\nabla$ is an $L_{d_F}$-cocycle, whose cohomology class
\[
 \left[\At_{(F[1],d_F)}^\nabla\right] \in \H^1\Big(\Gamma\big(T_{F[1]}^\vee \otimes \End(T_{F[1]})\big), L_{d_F}\Big)
\]
is independent of the choice of the connection $\nabla$ and is called the Atiyah class of the dg manifold $(F[1],d_F)$, denoted by $\At_{(F[1],d_F)}$ .
The Todd cocycle of the dg manifold $(F[1],d_F)$ associated with the affine connection $\nabla$ is
\[
 \Td_{(F[1],d_F)}^\nabla = \Ber\left(\frac{\At^\nabla_{(F[1],d_F)}}{1-e^{-\At^\nabla_{(F[1],d_F)}}} \right) \in \bigoplus_{k \geq 0} (\Omega^k(F[1]))^k.
\]
Its cohomology class $\Td_{(F[1],d_F)} \in \oplus_{k\geq 0} \H^k(\Omega^k(F[1]), L_{d_F})$ is independent of the choice of the connection $\nabla$, and is called the Todd class of the dg manifold $(F[1],d_F)$.

The second type is the Atiyah and Todd classes~\cite{CSX} of the Lie pair $({\TkM}, F)$. Let $\nabla^B$ be a ${\TkM}$-connection on the vector bundle $B = T_\k M/F$ extending the Bott $F$-connection. Consider the bundle map
\[
R_{1,1}^{\nabla^B}\colon F \otimes B \to \End(B)
\]
 defined by
\[
 R^{\nabla^B}_{1,1}(a, b) = \nabla^B_a\nabla^B_u - \nabla^B_u \nabla^B_a - \nabla^B_{[a,u]},
\]
for any $a \in \Gamma(F), u \in \Gamma({\TkM})$ satisfying $\pr_B(u) = b$.
One easily checks that $R^{\nabla^B}_{1,1} \in \Omega^1_F(B^\vee \otimes \End(B))$ is well-defined and is a $1$-cocycle of the Lie algebroid $F$ valued in the $F$-module $B^\vee \otimes \End(B)$. Its cohomology class $\At_{{\TkM}/F} \in \H^1_{\CE}(F, B^\vee \otimes \End(B))$ is independent of the choice of $\nabla^B$ and is called the Atiyah class of the Lie pair $({\TkM}, F)$.
The Todd cocycle of the Lie pair $({\TkM}, F)$ with respect to the chosen connection $\nabla^B$ is the Chevalley-Eilenberg cocycle
\[
  \Td_{{\TkM}/F}^{\nabla^B} := \det \left(\frac{R^{\nabla^B}_{1,1}}{1-e^{R^{\nabla^B}_{1,1}}}\right) \in \bigoplus_{k} \Omega_F^k(\wedge^k B^\vee),
\]
whose cohomology class $\Td_{{\TkM}/F} \in \bigoplus_{k} \H^k_{\CE}(F,\wedge^k B^\vee)$ is also independent of the choice of $\nabla^B$, and is called the Todd class of the Lie pair $({\TkM}, F)$.
The following proposition was proved in~\cite{CXX}.
\begin{proposition}[\cite{CXX}]\label{prop: Atiyah and Todd classes}
  There exist canonical isomorphisms
  \[
    \Phi \colon \H^k\big(\Omega^k(F[1]), L_{d_F}\big) \xrightarrow{\cong} \H^k_{\CE}\big(F, \wedge^k B^\vee\big),\quad k \geq 1,
  \]
  which send the Todd class of the dg manifold $(F[1],d_F)$ to that of the Lie pair $({\TkM}, F)$,  i.e.,
  \begin{align*}
    \Phi\left(\Td_{(F[1],d_F)}\right) = \Td_{{\TkM}/F}.
  \end{align*}
\end{proposition}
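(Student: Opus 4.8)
The plan is to derive both the isomorphism $\Phi$ and the compatibility of Todd classes from the fundamental contraction of Proposition~\ref{prop: coalgebra contraction on STF}, applied to the exterior powers of the cotangent bundle rather than to the symmetric powers of the tangent bundle. The starting point is the observation that, as an $F$-module (equivalently, as an $\LdF$-complex), the two-term complex $(F[1] \oplus \TkM, \delta)$ underlying $T_{F[1]}$ is homotopy equivalent to $B$ concentrated in degree $0$: the component $\delta$ identifies the odd summand $F[1]$ with the subbundle $F \subseteq \TkM$, leaving $B = \TkM/F$ as the homology. Since a contraction is preserved by every functorial construction (duals, tensor, exterior, and $\End$), this produces a contraction of the $\LdF$-complex $\Omega^k(F[1]) = \Gamma(\wedge^k T_{F[1]}^\vee)$ onto $(\OmegaF(\wedge^k B^\vee), d_F)$, and likewise of $T_{F[1]}^\vee \otimes \End(T_{F[1]})$ onto $B^\vee \otimes \End(B)$. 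The induced maps on cohomology give the asserted isomorphisms $\Phi$, and their canonicity follows from the fact, recorded in Proposition~\ref{prop: coalgebra contraction on STF}, that the projection $\Phi$ is independent of the auxiliary data $\widetilde{\nabla}^F$ and $j$.

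To match the Atiyah classes, I would fix a triple $(j, \widetilde{\nabla}^F, \nabla^B)$ and use the associated pullback connection $\nabla$ on $F[1]$ from Proposition~\ref{prop: pullback connection}, while employing the same $\nabla^B$ to compute $R^{\nabla^B}_{1,1}$ on the Lie-pair side. The essential step is to evaluate the Atiyah cocycle $\At^\nabla_{(F[1],d_F)}$ on horizontal lifts $\widehat{j(b_1)}, \widehat{j(b_2)}$, which lie in the image of the inclusion $\Psi$. Two facts collapse the computation: first, $\nabla_{\widehat{j(b_1)}} \widehat{j(b_2)} = \Psi(\nabla^B_{j(b_1)} b_2)$ by Equation~\eqref{Eq: Bconnection and pullback connection}; second, since $\Psi$ is a chain map, $\llbracket d_F, \Psi(b) \rrbracket = \Psi(d_F^B b)$, where $(d_F^B b)(a) = \nabla^{\Bott}_a b$ for $a \in \Gamma(F)$. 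Substituting these into the definition of $\At^\nabla_{(F[1],d_F)}$ and projecting with $\Phi$, which is a left inverse of $\Psi$ on the relevant summand, every term reorganizes into the mixed curvature $\nabla^B_a \nabla^B_{j(b_1)} - \nabla^B_{j(b_1)} \nabla^B_a - \nabla^B_{[a, j(b_1)]}$, which is exactly $R^{\nabla^B}_{1,1}(a, b_1)$ as an endomorphism of $B$. This establishes $\Phi(\At_{(F[1],d_F)}) = \At_{\TkM/F}$ at the level of cocycles.

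Finally, I would upgrade the Atiyah matching to the Todd matching. Both Todd cocycles are obtained by applying the same universal power series $t \mapsto t/(1-e^{-t})$ to the respective Atiyah cocycles and taking a (super)determinant, so it suffices to check that $\Phi$ intertwines $\Ber$ over $T_{F[1]}$ with $\det$ over $B$. This is where the graded structure enters. Under the splitting $T_{F[1]} \cong \pi^\ast(\TkM \oplus F[1])$, the Berezinian of an even endomorphism is the determinant on the even part $\TkM$ divided by that on the odd part $F[1]$; writing $\TkM = F \oplus B$, the contributions of the $F$-summand cancel exactly against those of the odd $F[1]$ under the contraction (the two being identified by $\delta$), so the Berezinian descends to the ordinary determinant on $B$. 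Since $\Phi$ arises from a homotopy equivalence compatible with the exterior-algebra structure and is therefore multiplicative for the wedge products on cohomology, combining this reduction with the cocycle identity $\Phi(\At_{(F[1],d_F)}) = \At_{\TkM/F}$ yields $\Phi(\Td_{(F[1],d_F)}) = \Td_{\TkM/F}$.

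The main obstacle I expect is the cocycle computation in the second paragraph: although each ingredient is available, tracking the graded signs in $\llbracket d_F, - \rrbracket$ together with the $\OmegaF$-linear extension of $\Psi$ and of $\nabla$ on mixed generators requires care, and one must verify that the cross terms involving the vertical generators $\iota_a$, on which $\Phi$ vanishes, contribute nothing after projection. The Berezinian-to-determinant reduction is conceptually clean but also relies on the observation that the off-diagonal, degree-shifting blocks of $\At^\nabla$ do not affect the superdeterminant of the relevant diagonal block.
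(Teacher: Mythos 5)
The paper itself offers no proof of this proposition --- it is imported wholesale from \cite{CXX} --- so there is no in-paper argument to compare against; your proposal is in effect a reconstruction of the argument of that reference, and its first two paragraphs follow the right route. The isomorphism $\Phi$ is indeed obtained by dualizing the contraction of Proposition~\ref{prop: coalgebra contraction on STF} (one small inversion: on the form side it is the \emph{inclusion}, dual to the canonical projection on polyvector fields, that is manifestly independent of $(j,\widetilde{\nabla}^F)$, while the projection is dual to the non-canonical $\Psi$; canonicity of the cohomology isomorphism then follows because the two induce mutually inverse maps). The identification $\Phi(\At_{(F[1],d_F)})=\At_{\TkM/F}$ is, as you say, the evaluation of the Atiyah cocycle of the pullback connection on the horizontal lifts $\widehat{j(b_i)}$, using Equation~\eqref{Eq: Bconnection and pullback connection} and the chain-map property of $\Psi$; the cross terms you flag (the $\eth$-action on $\Omega_F^1$ and the decomposition of $[a,j(b_1)]$ into its $F$- and $j(B)$-components) do close up, precisely because $\nabla^B$ extends the Bott connection.

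The genuine gap is in your third paragraph. Passing from Atiyah to Todd classes is not a formal consequence of $\Phi(\At_{(F[1],d_F)})=\At_{\TkM/F}$, because $\Td^\nabla_{(F[1],d_F)}=\Ber\bigl(\At^\nabla/(1-e^{-\At^\nabla})\bigr)$ is built from the \emph{entire} endomorphism-valued form $\At^\nabla\in\Gamma(T_{F[1]}^\vee\otimes\End(T_{F[1]}))$, not only from its $B^\vee\otimes\End(B)$ block. Writing $\log\Td=\str(g(\At^\nabla))$ with $g(t)=\log\bigl(t/(1-e^{-t})\bigr)$, the classes are controlled by the supertraces $\str((\At^\nabla)^m)$, and these receive contributions from compositions of \emph{all} blocks of $\At^\nabla(\widehat{j(b)},-)$: loops through the off-diagonal pieces $\Hom(F[1],\TkM)$ and $\Hom(\TkM,F[1])$, and through the diagonal blocks over $F$ and over $F[1]$. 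Your two key assertions --- that the off-diagonal, degree-shifting blocks do not affect the superdeterminant, and that the $F$- and $F[1]$-contributions cancel --- are exactly the content that must be proved, and neither follows from functoriality of contractions or from multiplicativity of $\Phi$. They require the explicit block structure of the Atiyah cocycle of the pullback connection attached to a triple $(j,\widetilde{\nabla}^F,\nabla^B)$: one must establish vanishing statements in the spirit of Lemma~\ref{lem: special connection on F} (for instance, that $\At^\nabla(\widehat{j(b)},-)$ is triangular with respect to a filtration of $T_{F[1]}$ with graded pieces $F[1]$, $F$, $B$) and then identify the induced maps on the $F$- and $F[1]$-pieces under the isomorphism $\delta$. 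This is the bulk of the computation carried out in \cite{CXX}; as written, your argument assumes its conclusion at precisely this step.
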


\subsubsection{Duflo-Kontsevich type isomorphisms for dg manifolds}\label{sec: kd for dg manifolds}
We now recall the Duflo-Kontsevich type isomorphism~\cites{LSX, SXsurvey} for the dg manifold $(F[1], d_F)$. Let $\T_{\poly}^p(F[1]) = \Gamma(S^p(T_{F[1]}[-1]))$ be the space of $p$-vector fields on the graded manifold $F[1]$.
The graded left $\Omega_F$-module
\[
 \T_{\poly}(F[1]) = \bigoplus_{p \in \Z^{\geq 0}} \T_{\poly}^p(F[1])
\]
is called the space of polyvector fields on $F[1]$. Let
\[
 \tot\big(\T_{\poly}(F[1])\big) = \bigoplus_n \big(\T_{\poly}(F[1])\big)^n
\]
be the associated \textit{direct sum} left graded $\Omega_F$-module. Here $\big(\T_{\poly}(F[1])\big)^n$ denotes the subspace consisting of all elements of degree $n$.
The graded commutator $[-,-]$ on $ \Gamma(T_{F[1]}) = \Der(\Omega_F)$ is a graded Lie bracket. It extends naturally to a degree $(-1)$ graded Lie bracket $[-,-]$, called the Schouten-Nijenhuis bracket, on $ \tot\big(\T_{\poly}(F[1])\big)$.
When equipped with Lie derivative $L_{d_F}$ along the homological vector field $d_F$, the quadruple $\big(\tot\big(\T_{\poly}(F[1])\big) , L_{d_F}, \wedge, [-,-]\big)$ is a dg Gerstenhaber algebra.
\begin{remark}
  Note that $\Omega_F$ as the space of the functions on the finite dimensional graded manifold $F[1]$ is non-negatively graded. Hence, $\T_{\poly}^1(F[1])$ is non-negatively graded as well.
  Thus, for polyvector fields on the dg manifold $(F[1],d_F)$,
  the direct sum total complex
  \[
  \left(\bigoplus_n \left(\bigoplus_{p \in \Z^{\geq 0}} \T_{\poly}^p(F[1])\right)^n, L_{d_F}\right)
  \]
   indeed coincides with the direct product total complex
   \[
  \left(\bigoplus_n \left(\prod_{p\in \Z^{\geq 0}}\T_{\poly}^p(F[1])\right)^n, L_{d_F}\right).
   \]
Therefore, there is no ambiguity for the notation $\tot\big(\T_{\poly}(F[1])\big)$.
\end{remark}
The space $\D_{\poly}^p(F[1])$ of $p$-differential operators on the graded manifold $F[1]$ is defined to be the tensor product $\otimes_{\Omega_F}^{p} (\D(F[1])[-1])$ of $p$-copies of the  $\Omega_F$-module $\D_{\poly}^1(F[1]):=\D(F[1])[-1]$.
The graded left $\Omega_F$-module
\[
\D_{\poly}(F[1]) = \bigoplus_{p \in \Z^{\geq 0}} \D_{\poly}^p(F[1])
\]
is called the space of polydifferential operators on $F[1]$. Let
\[
 \tot\big(\D_{\poly}(F[1])\big)  =\bigoplus_n  \big(\D_{\poly}(F[1])\big)^n
\]
be the associated \textit{direct sum} graded left $\Omega_F$-module.
Here we emphasize that there is a difference between taking direct sum $\bigoplus_{p \in \Z^{\geq 0}} \D_{\poly}^p(F[1])$ and direct product $\prod_{p \in \Z^{\geq 0}} \D_{\poly}^p(F[1])$ in the definition of $\D_{\poly}(F[1])$, since elements in $\D_{\poly}^1(F[1])$ may have negative degrees.

As in the classical case, the space $\tot(\D_{\poly}(F[1]))$ carries a standard Gerstenhaber bracket
\[
\llbracket -,- \rrbracket \colon \D^p_{\poly}(F[1]) \otimes_{\Omega_F} \D^{p^\prime}_{\poly}(F[1]) \rightarrow \D^{p+p^\prime-1}_{\poly}(F[1]).
\]
There are two differentials on this space, which make it into a double complex: one is the Lie derivative, or the Gerstenhaber bracket along $d_F$,
\[
 \llbracket d_F, - \rrbracket \colon \D^p_{\poly}(F[1]) \rightarrow \D^{p}_{\poly}(F[1])[1],
\]
and the other is the Hochschild differential
\[
 d_\mathscr{H} \colon  \D^p_{\poly}(F[1]) \rightarrow \D^{p+1}_{\poly}(F[1])[1]
\]
defined by
\begin{align*}
d_{\mathscr{H}}\big(\widetilde{D}_1 \otimes \cdots \otimes \widetilde{D}_p\big) &= 1 \otimes \widetilde{D}_1 \otimes \cdots \otimes \widetilde{D}_p + \sum_{i=1}^k(-1)^{\ast_i}  \widetilde{D}_1\otimes \cdots \otimes \Delta\big(\widetilde{D}_i\big)\otimes\cdots \otimes \widetilde{D}_p \\
  &\quad -(-1)^{\ast_p}\widetilde{D}_1 \otimes \cdots \otimes \widetilde{D}_p \otimes 1,
\end{align*}
where $\widetilde{D}_i\in \D^1_{\poly}(F[1]), \ast_i = \sum_{j=1}^i\abs{\widetilde{D}_j}$ for any $1 \leq i \leq p$. Note that the coproduct $\Delta\colon \D^1_{\poly}(F[1]) \to \D^2_{\poly}(F[1])$ stems from~\eqref{Eqt:DeltaDFone}. For details, see~\cite{CCN}.

The total differential $\llbracket d_F, - \rrbracket  + d_{\mathscr{H}}$ with the standard Gerstenhaber bracket $\llbracket -,- \rrbracket$ makes $\tot(\D_{\poly}(F[1]))$ into a dg Lie algebra of degree $(-1)$. Moreover, the tensor product of left $\Omega_F$-modules
\[
\cup \colon \D^p_{\poly}(F[1]) \times \D^{p^\prime}_{\poly}(F[1]) \rightarrow \D^{p+p^\prime}_{\poly}(F[1])
\]
determines a cup product on the Hochschild cohomology. It follows that the direct sum total cohomology
\[
 \Big(\H^\bullet\big(\tot(\D_{\poly}(F[1])), \llbracket d_F, - \rrbracket + d_{\mathscr{H}}\big), \llbracket -,- \rrbracket, \cup\Big)
\]
is a Gerstenhaber algebra.

The inclusion $\Gamma(T_{F[1]})[-1] \hookrightarrow \D(F[1])[-1]$ extends to a map
\[
\hkr \colon \tot(\T_{\poly}(F[1])) \to \tot(\D_{\poly}(F[1])),
\]
called the Hochschild-Kostant-Rosenberg map, and defined by
\begin{equation}\label{Eq: hkr for dg manifolds}
 \hkr(X_1 \odot \cdots \odot X_p) = \frac{1}{p!}\sum_{\sigma \in S_{p}} \kappa(\sigma; X_1, \cdots, X_p) X_{\sigma(1)} \otimes \cdots \otimes X_{\sigma(p)},
\end{equation}
for any homogeneous elements $X_1,\cdots, X_p \in \T^1_{\poly}(F[1])$,  where the  Koszul sign $\kappa(\sigma; X_1,\cdots, X_p)$ is defined by the relation $X_1 \odot \cdots \odot X_p = \kappa(\sigma; X_1,\cdots, X_p) X_{\sigma(1)} \odot \cdots \odot X_{\sigma(p)}$.

Applying the Duflo-Kontsevich type theorem~\cite{LSX}*{Theorem 4.3} (see also~\cite{SXsurvey}) for dg manifolds to this particular dg manifold $(F[1], d_F)$, we obtain
\begin{theorem}[\cite{LSX}]\label{prop: KD for dg manifold}
 The composition
\[
 \hkr \circ \Td_{(F[1],d_F)}^{1/2}\colon \H^\bullet\big(\tot\big(\T_{\poly}(F[1])\big), L_{d_F} \big) \xrightarrow{\cong} \H^\bullet\big(\tot\big(\D_{\poly}(F[1])\big), \llbracket d_F, - \rrbracket + d_{\mathscr{H}}\big)
\]
is an isomorphism of Gerstenhaber algebras, where $\Td_{(F[1],d_{F})}^{1/2} \in \oplus_k\H^k((\Omega^k(F[1]))^\bullet, L_{d_F})$ acts by contraction, and $\hkr$ is the Hochschild-Kostant-Rosenberg map~\eqref{Eq: hkr for dg manifolds}.
\end{theorem}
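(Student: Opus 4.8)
The plan is to deduce the statement directly from the general Kontsevich--Duflo type theorem for dg manifolds proved in~\cite{LSX}*{Theorem 4.3}, by checking that $(F[1], d_F)$ is a dg manifold to which that theorem applies and that every structure appearing in the statement coincides with the general construction there. Recall first that $(F[1], d_F)$ is a finite-dimensional dg manifold: the graded manifold $F[1]$ has algebra of functions $\OmegaF = \Gamma(\wedge F^\vee)$, and $d_F$ is the homological vector field furnished by the Chevalley--Eilenberg differential of the Lie algebroid $F$. This is precisely the framework of~\cite{LSX}, so no extra hypotheses need to be arranged.

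Next I would verify that the three pieces of algebraic data in the statement match the general setup. On the source side, $(\tot^\bullet_\oplus(\T_{\poly}(F[1])), L_{d_F}, \wedge, [-,-])$ is the dg Gerstenhaber algebra of polyvector fields, with $[-,-]$ the Schouten--Nijenhuis bracket, as assembled in the preceding subsection. On the target side, $(\tot^\bullet_\oplus(\D_{\poly}(F[1])), \llbracket d_F, -\rrbracket + d_{\mathscr{H}}, \cup, \llbracket -,-\rrbracket)$ is the dg Lie (indeed Gerstenhaber) algebra of polydifferential operators whose cohomology is the Hochschild cohomology of the dg manifold. Finally, the map $\hkr$ of~\eqref{Eq: hkr for dg manifolds} is the antisymmetrization quasi-isomorphism extending the tautological inclusion $\Gamma(T_{F[1]}) \hookrightarrow \D(F[1])$. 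With these identifications, the statement is exactly the conclusion of~\cite{LSX}*{Theorem 4.3} specialized to $(F[1], d_F)$.

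The key content supplied by the cited theorem is that the bare map $\hkr$ is a quasi-isomorphism respecting the Gerstenhaber brackets but failing in general to intertwine the cup/associative products, and that this defect is corrected by contracting against the square root of the Todd class $\Td_{(F[1],d_F)} \in \oplus_k \H^k(\Omega^k(F[1]), L_{d_F})$, which is well-defined by the Atiyah--Todd discussion recalled above. Since $\Td^{1/2}_{(F[1],d_F)}$ acts by contraction on cohomology, the composition $\hkr \circ \Td^{1/2}_{(F[1],d_F)}$ is well-defined, and invoking~\cite{LSX}*{Theorem 4.3} yields that it is an isomorphism of Gerstenhaber algebras.

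The only point requiring genuine care is bookkeeping rather than computation: one must reconcile the degree conventions used here with those of~\cite{LSX}. As noted in the remark opening this section, $(\T^p_{\poly}(F[1]))^q$ and $(\D^p_{\poly}(F[1]))^q$ are shifted by $(+1)$ relative to~\cite{LSX}, so I would confirm that the totalizations $\tot^\bullet_\oplus$, the differentials $L_{d_F}$ and $\llbracket d_F, -\rrbracket + d_{\mathscr{H}}$, and the Gerstenhaber structures all line up after this shift. Once the conventions are matched, no further argument is needed beyond the application of the general theorem.
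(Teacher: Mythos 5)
Your proposal matches the paper exactly: the paper gives no independent proof of this statement, but simply cites \cite{LSX}*{Theorem 4.3} and specializes it to the dg manifold $(F[1],d_F)$, which is precisely what you do. Your additional remarks on matching the degree conventions (the $(+1)$ shift relative to \cite{LSX}) are consistent with the paper's own comment at the start of Section~\ref{Sec: main section} and do not change the argument.
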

This isomorphism is called the Duflo-Kontsevich type isomorphism for the dg manifold $(F[1],d_F)$.
\begin{remark}
  Note that the Duflo-Kontsevich type isomorphism for dg manifolds is only valid for direct sum total cohomologies.
\end{remark}

\subsubsection{Cohomologies arising from the Lie pair $({\TkM}, F)$}\label{Secpolyofpair}
We now recall from~\cites{BSX,LSXpair} the cohomology of polyvector fields and that of polydifferential operators of the Lie pair $({\TkM}, F)$.
They can be thought of as polyvector fields and polydifferential operators on the leaf space of the foliation.
Let $\T_{\poly}^{0}(B)$ be the algebra $R$ of $\k$-valued smooth functions on $M$.
The space of polyvector fields of $({\TkM}, F)$ is a complex of $F$-modules with trivial differential
\[
\T_{\poly}(B) = \bigoplus_{n\geq 0} \T_{\poly}^n(B) = \bigoplus_{n\geq 0} \Gamma(\wedge^{n} B).
\]
By $\H^\bullet_{\CE}(F, \T_{\poly}(B))$, we denote the hypercohomology of the cochain complex
\[
 \left(\tot\big(\Omega_F(\T_{\poly}(B))\big) = \bigoplus_n\bigoplus_{p,q\geq0, p+q=n}\Gamma(\wedge^q F^\vee) \otimes_R \T^p_{\poly}(B), d_F^{\Bott}\right).
\]
Here the differential $d_F^{\Bott}$ is the Chevalley-Eilenberg differential induced from the obvious extension of the Bott $F$-connection on $B$, which is the leafwise de Rham differential with coefficient in $\T_{\poly}(B)$. Note that   we  count the  \textit{total}  degree for elements in  $\Omega_F(\T_{\poly}(B))$, i.e., elements in $\Gamma(\wedge^q F^\vee) \otimes_R \T^p_{\poly}(B)$ are of degree $p+q$.

Let $\D_{\poly}^{0}(B) = R$, and for each $k \geq 1$, $\D_{\poly}^{k}(B) = \otimes_R^k (\D(B))$ be the tensor product of $k$-copies of  the left $R$-module $\D(B) := \frac{\D(M)}{\D(M)\Gamma(F)}$.
Now we set
\[
 \D_{\poly}(B) = \bigoplus_{k \geq 0}\D_{\poly}^{k}(B).
\]
Since the comultiplication $\Delta \colon \D(B) \to \D(B) \otimes_R \D(B)$~\eqref{Eq: coproduct on DB} is coassociative,  the operator $d_{\mathscr{H}}\colon \D_{\poly}^{k}(B) \to \D_{\poly}^{k+1}(B)$ defined by
\begin{align*}
  d_{\mathscr{H}}(u_1 \otimes \cdots \otimes u_k) &= 1 \otimes u_1 \otimes \cdots \otimes u_k + \sum_{i=1}^k(-1)^iu_1\otimes\cdots\otimes\Delta(u_i)\otimes\cdots\otimes u_k \\
  &\quad - (-1)^{k}u_1 \otimes \cdots \otimes u_k \otimes 1,
\end{align*}
for any $u_1,\cdots,u_k \in \D(B)$, is of square zero, called the Hochschild differential.
Moreover, the comultiplication $\Delta$ is a morphism of $F$-modules. Hence, the Hochschild complex $(\D_{\poly}(B), d_{\mathscr{H}})$ is a complex of $F$-modules.
By $\H^\bullet_{\CE}\big(F, (\D_{\poly}(B), d_{\mathscr{H}})\big)$, we denote the hypercohomology of the cochain complex
\[
  \left(\tot\big(\Omega_F(\D_{\poly}(B))\big) = \bigoplus_n\bigoplus_{p, q \geq 0, p+q=n}\Gamma(\wedge^q F^\vee) \otimes_R \D^p_{\poly}(B), d_F^\U + \id \otimes d_{\mathscr{H}}\right),
\]
where $d_F^\U\colon \Omega_F^\bullet(\D_{\poly}(B)) \rightarrow \Omega_F^{\bullet+1}(\D_{\poly}(B))$ is the Chevalley-Eilenberg differential. Here again we count the  \textit{total}  degree for elements in $\Omega_F(\D_{\poly}(B))$, i.e., elements in $\Gamma(\wedge^q F^\vee) \otimes_R \D^p_{\poly}(B)$ are of degree $p+q$.

It is proved in~\cite{BSX} that both $\H^\bullet_{\CE}(F,\T_{\poly}(B))$ and $\H^\bullet_{\CE}\big(F, (\D_{\poly}(B), d_{\mathscr{H}})\big)$ carry canonical Gerstenhaber algebra structures, where the multiplications are wedge and cup products respectively, but the Lie brackets are much more involved and are obtained by homotopy transfer.

Note that the inclusion $\Gamma(B) \hookrightarrow \D(B)$ extends naturally by skew-symmetrization to a morphism of complex of $F$-modules $\hkr\colon (\T_{\poly}(B), 0) \to (\D_{\poly}(B), d_{\mathscr{H}})$, called the Hochschild-Kostant-Rosenberg map of the Lie pair $({\TkM}, F)$,
\begin{equation}\label{Eq: hkr for Lie pairs}
  \hkr(b_1\wedge \cdots \wedge b_n) = \frac{1}{n!}\sum_{\sigma \in S_n} \sgn(\sigma) b_{\sigma(1)} \otimes \cdots \otimes b_{\sigma(n)},\;\forall b_1,\cdots,b_n \in \Gamma(B).
\end{equation}
It is indeed a quasi-isomorphism of complexes of $F$-modules \cites{C-S-X,LSXpair}.
Therefore, it induces an isomorphism of vector spaces
\[
 \hkr \colon \H^\bullet_{\CE} \big(F, \T_{\poly}(B)\big) \stackrel{\cong}{\longrightarrow} \H^\bullet_{\CE}\big(F, (\D_{\poly}(B), d_{\mathscr{H}})\big).
\]
\subsubsection{Hochschild cohomology of integrable distributions}
We now can describe the Hochschild cohomology of the dg manifold $(F[1], d_F)$ by proving Theorem~\ref{Thm b} declared in the introduction, i.e., the following
\begin{theorem}\label{prop: contraction on Dpoly}
For any integrable distribution $F$, there is a contraction of dg $\Omega_F$-modules
	\begin{equation}\label{Eqt:ContractDoplyF1}
	\begin{tikzcd}
	\Big(\tot\big(\D_{\poly}({F[1]})\big), \llbracket d_F, - \rrbracket + d_\mathscr{H}\Big) \arrow[loop left, distance=2em, start anchor={[yshift=-1ex]west}, end anchor={[yshift=1ex]west}]{}{\breve{H}_\natural} \arrow[r,yshift = 0.7ex, "\Phi_\natural"] & \Big(\tot\big(\OmegaF(\D_{\poly}(B))\big) , \dF ^{\mathcal{U}} + \id \otimes d_\mathscr{H}\Big) \arrow[l,yshift = -0.7ex, "\breve{\Psi}_\natural"],
	\end{tikzcd}
	\end{equation}
where the projection $\Phi_\natural$ intertwines the associative products on $\tot(\D_{\poly}(F[1]))$ and $\tot(\Omega_F(\D_{\poly}(B)))$.
\end{theorem}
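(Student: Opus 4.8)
The plan is to bootstrap the single-operator contraction of Theorem~\ref{thm: Luca's result} up to polydifferential operators by the \emph{tensor trick}, and then to reinstate the Hochschild differential $d_{\mathscr{H}}$ as a perturbation. Write $(\Phi_\natural, \Psi_\natural, H_\natural)$ for the contraction of $(\D(F[1]), \llbracket d_F,-\rrbracket)$ onto $(\OmegaF(\D(B)), \dF^\U)$. Since $\D_{\poly}^p(F[1]) = \otimes_{\OmegaF}^p(\D(F[1])[-1])$ and $\OmegaF(\D_{\poly}^p(B)) = \otimes_{\OmegaF}^p(\OmegaF(\D(B))[-1])$, I would first form, for each $p$, the $p$-fold tensor power of the contraction: the projection $\breve\Phi := \Phi_\natural^{\otimes p}$, the inclusion $\breve\Psi := \Psi_\natural^{\otimes p}$, and the telescoping homotopy
\[
\breve H \;=\; \sum_{i=1}^{p} (\Psi_\natural\Phi_\natural)^{\otimes (i-1)} \otimes H_\natural \otimes \id^{\otimes(p-i)}
\]
(with the appropriate Koszul signs). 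A routine check shows $(\breve\Phi, \breve\Psi, \breve H)$ is again a contraction, now of $(\tot_\oplus(\D_{\poly}(F[1])), \llbracket d_F,-\rrbracket)$ onto $(\tot(\OmegaF(\D_{\poly}(B))), \dF^\U)$, still \emph{without} the Hochschild terms.

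Next I would treat $d_{\mathscr{H}}$ as a perturbation of $\llbracket d_F,-\rrbracket$ on the big complex and apply the perturbation lemma. Two facts make the outcome transparent. First, because $\Phi_\natural$ is a morphism of $\OmegaF$-coalgebras (Theorem~\ref{thm: Luca's result}) and is counital ($\Phi_\natural(1)=1$), its tensor powers intertwine the Hochschild differentials, i.e. $\breve\Phi\circ d_{\mathscr{H}} = (\id\otimes d_{\mathscr{H}})\circ\breve\Phi$; combined with the side condition $\breve\Phi\breve H = 0$ this forces all higher terms of the perturbed projection to vanish, so that $\breve\Phi_\natural = \sum_{k\geq 0}\breve\Phi(d_{\mathscr{H}}\breve H)^k = \breve\Phi = \Phi_\natural^{\otimes}$. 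In particular the projection is exactly the canonical $\Phi_\natural$ applied factorwise, independent of all choices. Second, the same identities together with $\breve\Phi\breve\Psi = \id$ show that the perturbed differential on the small complex collapses to its leading term $\breve\Phi d_{\mathscr{H}}\breve\Psi = (\id\otimes d_{\mathscr{H}})\breve\Phi\breve\Psi = \id\otimes d_{\mathscr{H}}$, so the small side carries precisely $\dF^\U + \id\otimes d_{\mathscr{H}}$, as required. Setting $\breve\Psi_\natural := \sum_{k\geq 0}(\breve H d_{\mathscr{H}})^k\breve\Psi$ and $\breve H_\natural := \sum_{k\geq 0}\breve H(d_{\mathscr{H}}\breve H)^k$ then yields the contraction~\eqref{Eqt:ContractDoplyF1}. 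That $\Phi_\natural$ intertwines the associative products is now immediate: the cup products on both sides are concatenation of tensor factors over $\OmegaF$, and $\Phi_\natural = \Phi_\natural^{\otimes}$ is applied factorwise, so $\Phi_\natural(\alpha\cup\beta) = \Phi_\natural(\alpha)\cup\Phi_\natural(\beta)$ by functoriality of $\otimes_{\OmegaF}$, the signs matching because $\Phi_\natural$ is a morphism of graded $\OmegaF$-modules.

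The one genuinely delicate point — and what I expect to be the main obstacle — is the \emph{convergence of the series defining $\breve\Psi_\natural$ and $\breve H_\natural$ in the direct-sum total complex}. Unlike the projection, these involve $\sum_k(\breve H d_{\mathscr{H}})^k$, and since $d_{\mathscr{H}}$ raises the tensor-arity $p$ by one while preserving the total degree, a fixed homogeneous element is a priori sent to components of arbitrarily large arity; a naive total-degree count does not bound $p$, because the factors of $\D(F[1])$ have degrees as low as $-\rk F$. Convergence in the direct sum must therefore be extracted from the explicit structure of the maps, not from degree reasons alone. The mechanism I would exploit is that $F[1]$ is a \emph{finite-dimensional} graded manifold: its fiber coordinates are odd and $\rk F$ in number, so the vertical ($F[1]$-)degree of any polydifferential operator is bounded by $\rk F$, as $S^{k}(F[1]) = 0$ for $k > \rk F$. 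Since $H_\natural$ trades a horizontal direction for a vertical one (cf.~\eqref{Eqt:Hijkto}, where $H(\widehat u) = -\iota_{\pr_F(u)}$), the homotopy $\breve H$ raises the total vertical degree while $d_{\mathscr{H}}$ (a coproduct or unit insertion) preserves it; hence $\breve H d_{\mathscr{H}}$ raises a quantity that is bounded by $\rk F$, making it locally nilpotent so that both series terminate on every element. Carefully verifying this filtered/locally-nilpotent condition — in particular controlling the lower-order corrections to $H_\natural$ coming from the perturbation $\Theta$ in the proof of Theorem~\ref{thm: Luca's result}, which preserve neither the order nor, obviously, the vertical degree — is where the real work lies; once it is in place, the homological perturbation lemma applies verbatim and delivers the stated contraction.
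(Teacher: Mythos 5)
Your overall route is exactly the paper's: apply the tensor trick to the contraction of Theorem~\ref{thm: Luca's result}, treat $d_{\mathscr{H}}$ as a perturbation, use the fact that $\Phi_\natural$ is a morphism of $\Omega_F$-coalgebras to get $\Phi_\natural\circ d_{\mathscr{H}}=(\id\otimes d_{\mathscr{H}})\circ\Phi_\natural$, and conclude via the side conditions that the perturbed projection is still $\Phi_\natural^{\otimes}$ and that the induced differential on the small side collapses to $\id\otimes d_{\mathscr{H}}$. All of that, including the factorwise argument for compatibility with cup products, matches the paper's proof.

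The genuine gap is in your convergence argument. You propose that $\breve H d_{\mathscr{H}}$ raises ``a quantity bounded by $\rk F$,'' namely the vertical ($F[1]$-)degree, but this does not give local nilpotency: the vertical degree of a single tensor factor is indeed at most $\rk F$, but the relevant quantity for an element of $\D^p_{\poly}(F[1])$ is the \emph{total} vertical degree over all $p$ factors, which is bounded only by $p\cdot\rk F$ --- and $p$ itself increases by one with every application of $d_{\mathscr{H}}$, so the bound recedes as fast as the quantity grows. Worse, the telescoping homotopy replaces the factors to the left of the $H_\natural$-slot by $\Psi_\natural\Phi_\natural(\,\cdot\,)$, which resets their vertical degree to zero, so the total vertical degree is not even monotone under $\breve H$. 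The mechanism the paper actually uses is the \emph{order} filtration on differential operators (equivalently, in the notation of~\eqref{Eqt:Hijkto}, the fact that $H$ strictly lowers the $S^iF$-degree, which is preserved by the coproduct and bounded below by $0$): for $X\in\D^{\leq p}(B)$ and $D\in\D^{\leq p}(F[1])$ one checks directly that $(H_\natural d_{\mathscr{H}})^n\Psi_\natural(X)=0$ and $(H_\natural d_{\mathscr{H}})^nH_\natural(D)=0$ for all $n\geq p+1$, which is exactly the kernel condition~\eqref{Eq: perturb constraints}. This filtration argument also disposes of the correction terms you worry about: the perturbation $\Theta$ in the proof of Theorem~\ref{thm: Luca's result} \emph{lowers} the order filtration, so $H_\natural=\sum_k H_0(\Theta H_0)^k$ remains filtered by order, and no separate control of those corrections is needed. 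With the vertical-degree bound replaced by the order-filtration bound, your proof is complete and coincides with the paper's.
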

\begin{proof}
First, by a degree shifting on the contraction in Theorem~\ref{thm: Luca's result}, one has the following contraction
\[
\begin{tikzcd}
	\big(\D({F[1]})[-1], \llbracket d_F, - \rrbracket\big) \arrow[loop left, distance=2em, start anchor={[yshift=-1ex]west}, end anchor={[yshift=1ex]west}]{}{H_\natural} \arrow[r, yshift = 0.7ex, "\Phi_\natural"] & \big(\Omega_F(\D(B))[-1],  d_F^{\U}\big) \arrow[l, yshift = -0.7ex, "\Psi_\natural"].
	\end{tikzcd}
\]
Applying the tensor trick (see Lemma~\ref{Lem: Tensor trick}) to the above contraction,
we obtain the following contraction
	\begin{equation}\label{Eq: contraction dpoly before pur}
	\begin{tikzcd}
	\Big(\tot\big(\D_{\poly}(F[1])\big), \llbracket d_F,- \rrbracket\Big) \arrow[loop left, distance=2em, start anchor={[yshift=-1ex]west}, end anchor={[yshift=1ex]west}]{}{H_\natural} \arrow[r,yshift = 0.7ex, "\Phi_\natural"] & \Big(\tot\big(\OmegaF(\D_{\poly}(B))\big) , \dF^{\mathcal{U}}\Big) \arrow[l,yshift = -0.7ex, "\Psi_\natural"],
	\end{tikzcd}
	\end{equation}
where $\Phi_\natural$ and $\Psi_\natural$ are defined respectively by
\begin{align*}
  \Phi_\natural(\widetilde{D}_1 \otimes \cdots \otimes \widetilde{D}_n) &= \Phi_\natural(\widetilde{D}_1) \otimes \cdots \otimes \Phi_\natural(\widetilde{D}_n), \\
  \Psi_\natural(\widetilde{d}_1 \otimes \cdots \otimes \widetilde{d}_n) &= \Psi_\natural(\widetilde{d}_1) \otimes \cdots \otimes \Psi_\natural(\widetilde{d}_n),
\end{align*}
for any $\widetilde{D}_1,\cdots, \widetilde{D}_n \in \D(F[1])[-1] = \D^1_{\poly}(F[1]) $ and any $\widetilde{d}_1,\cdots, \widetilde{d}_n \in \OmegaF(\D(B))[-1]$, and the homotopy operator $H_\natural$ is defined by
\begin{align*}
& H_\natural(\widetilde{D}_1 \otimes \cdots \otimes \widetilde{D}_n) \\
:=& \sum_{k=1}^n \Psi_\natural(\Phi_\natural(\widetilde{D}_1)) \otimes \Psi_\natural(\Phi_\natural(\widetilde{D}_2))\otimes \cdots \otimes \Psi_\natural(\Phi_\natural (\widetilde{D}_{k-1}))\otimes  H_\natural(\widetilde{D}_k)  \otimes \widetilde{D}_{k+1} \otimes \cdots \otimes \widetilde{D}_{n}.
\end{align*}
For the space on the right-hand side in~\eqref{Eq: contraction dpoly before pur}, we have used the following natural identification
\[
 \otimes^p_{\Omega_F} \big(\Omega_F(\D(B))[-1]\big) \cong   \Omega_F\big(\otimes^p_R(\D(B))\big)[-p] = \Omega_F\big(\D^p_{\poly}(B)\big)[-p].
\]
To obtain the desired contraction~\eqref{Eqt:ContractDoplyF1}, we need to check that the perturbation $d_{\mathscr{H}}$ of the differential $\llbracket d_F,- \rrbracket$ on $\tot(\D_{\poly}(F[1]))$ satisfies the assumption~\eqref{Eq: perturb constraints} in the perturbation lemma~\ref{Lem: OPT}.

For this purpose, recall that $\D(F[1])$ (resp. $\D(B)$) admits an increasing filtration~\eqref{Eq: filtration on DF[1]} (resp.~\eqref{Eq: filtration on DB}).
Both $\Psi_\natural$ and $H_\natural$ in the contraction~\eqref{Contration:DFonetoOmegaFDpolyB} are filtered.
For any $\widetilde{d} \in \D^{\leq p}(B)[-1]$ considered as an element in $\OmegaF(\D^1_{\poly}(B))$ and $\widetilde{D} \in \D^{\leq p}(F[1])[-1]$ considered as an element in $\D^1_{\poly}(F[1])$, it follows from a direct computation that for any $n \geq p+1$,
\begin{align*}
  (H_\natural d_{\mathscr{H}})^n (\Psi_\natural(\widetilde{d})) = 0  &~\mbox{ ~and~ }~  (H_\natural d_{\mathscr{H}})^n (H_\natural(\widetilde{D}))   = 0.
\end{align*}
Using this fact, one obtains
\begin{align*}
  \cup_n \ker\big((H_\natural d_{\mathscr{H}})^n \Psi_\natural \big) &= \tot\big(\OmegaF(\D_{\poly}(B))\big)  & \mbox{and} \quad \cup_n \ker\big((H_\natural  d_{\mathscr{H}})^n H_\natural \big) &= \big( \tot \D_{\poly}(F[1]) \big).
\end{align*}
Meanwhile, since $\Phi_\natural$ in Theorem~\ref{thm: Luca's result} is a morphism of ${\OmegaF}$-coalgebras, it follows that $\Phi_\natural$ is compatible with the Hochschild differentials, i.e.,
\begin{align}\label{Eq: Phinatural and dH}
  \Phi_\natural  d_{\mathscr{H}} &= (\id \otimes d_{\mathscr{H}})  \Phi_\natural \colon \tot(\D_{\poly}(F[1])) \to \tot\big(\OmegaF(\D_{\poly}(B))\big).
\end{align}
Thus, using the side condition $\Phi_\natural H_\natural = 0$, one has
\begin{equation}\label{Eq: PhidHH}
  \Phi_\natural  d_{\mathscr{H}}  H_\natural = (\id \otimes d_{\mathscr{H}})  \Phi_\natural H_\natural = 0 \colon \tot(\D_{\poly}(F[1])) \to \tot\big(\OmegaF(\D_{\poly}(B))\big),
\end{equation}
which implies that
\[
 \cup_n \ker \big(\Phi_\natural (d_{\mathscr{H}} H_\natural)^n\big) = \tot\big(\D_{\poly}(F[1])\big).
\]
Thus, the perturbation $d_{\mathscr{H}}$ satisfies the constraint~\eqref{Eq: perturb constraints}.
Applying the perturbation lemma~\ref{Lem: OPT} to the contraction~\eqref{Eq: contraction dpoly before pur}, we obtain a new contraction
\[
	\begin{tikzcd}
	\Big(\tot\big(\D_{\poly}(F[1])\big), \llbracket d_F, - \rrbracket + d_{\mathscr{H}}\Big) \arrow[loop left, distance=2em, start anchor={[yshift=-1ex]west}, end anchor={[yshift=1ex]west}]{}{\breve{H}_\natural} \arrow[r,yshift = 0.7ex, "\breve{\Phi}_\natural"] & \Big(\tot\big(\OmegaF(\D_{\poly}(B))\big) , \dF ^{\mathcal{U}} + \varrho\Big) \arrow[l,yshift = -0.7ex, "\breve{\Psi}_\natural"],
	\end{tikzcd}
\]
where
\begin{align*}
 \varrho &= \sum_{n \geq 0}\Phi_\natural  (d_{\mathscr{H}}  H_\natural)^n d_{\mathscr{H}} \Psi_\natural \\
  &= \Phi_\natural  d_{\mathscr{H}} \Psi_\natural +  \sum_{n \geq 1}\Phi_\natural d_{\mathscr{H}}  H_\natural (d_{\mathscr{H}}  H_\natural)^{n-1} d_{\mathscr{H}}  \Psi_\natural  \qquad\qquad \left(\text{by Equation~\eqref{Eq: Phinatural and dH}} \right) \\
  &= \Phi_\natural d_{\mathscr{H}} \Psi_\natural + \sum_{n \geq 1} (\id \otimes d_{\mathscr{H}}) \Phi_\natural H_\natural  (d_{\mathscr{H}} H_\natural)^{n-1} d_{\mathscr{H}} \Psi_\natural   \qquad \left(\text{by the side condition $\Phi_\natural  H_\natural = 0$} \right) \\
 &= \Phi_\natural  d_{\mathscr{H}}  \Psi_\natural \\
 &= \id \otimes d_{\mathscr{H}},
 \end{align*}
and
\begin{align*}
  \breve{\Psi}_\natural &= \sum_{n \geq 0}(H_\natural  d_{\mathscr{H}})^n \Psi_\natural, &
  \breve{H}_{\natural} &= \sum_{n \geq 0}H_\natural  (d_{\mathscr{H}} H_\natural)^n.
\end{align*}
Moreover, one has
\begin{align*}
  \breve{\Phi}_\natural &= \sum_{n \geq 0}\Phi_\natural (d_{\mathscr{H}} H_\natural)^n = \Phi_\natural + \sum_{n \geq 1}\Phi_\natural d_{\mathscr{H}}  H_\natural  (d_{\mathscr{H}}  H_\natural)^{n-1} \qquad (\text{by Equation~\eqref{Eq: PhidHH}})\\
  &= \Phi_\natural.
\end{align*}
It is clear that $\Phi_\natural$ intertwines the associative products on $\tot(\D_{\poly}(F[1]))$ and $\tot\big(\Omega_F(\D_{\poly}(B))\big)$.
This completes the proof.
\end{proof}

As an immediate consequence, we  obtain the following
 \begin{corollary}\label{Coro: Isom of ass algebras on Dpoly}
The projection $\Phi_\natural$ in the contraction~\eqref{Eqt:ContractDoplyF1} induces an isomorphism of associative algebras on the cohomology
\begin{equation}\label{eq:SCE}	
\Phi_\natural \colon \H^\bullet\big(\tot\big(\D_{\poly}({F[1]})\big), \llbracket d_F, - \rrbracket  + d_{\mathscr{H}}\big) \xrightarrow{\cong} \H_{\CE}^\bullet\big(F, (\D_{\poly}(B), d_{\mathscr{H}})\big).
\end{equation}	
\end{corollary}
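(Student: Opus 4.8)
The plan is to obtain Corollary~\ref{Coro: Isom of ass algebras on Dpoly} as a formal consequence of the contraction constructed in Theorem~\ref{prop: contraction on Dpoly}, with essentially no further computation. A contraction is in particular a homotopy equivalence of cochain complexes, so the first step is simply to observe that $\Phi_\natural$ is a quasi-isomorphism. Indeed, the defining identities of the contraction~\eqref{Eqt:ContractDoplyF1} give $\Phi_\natural \circ \breve{\Psi}_\natural = \id$ and exhibit $\breve{\Psi}_\natural \circ \Phi_\natural$ as chain homotopic to the identity through the homotopy $\breve{H}_\natural$; hence $\Phi_\natural$ and $\breve{\Psi}_\natural$ descend to mutually inverse isomorphisms on cohomology, which already establishes the bijectivity in~\eqref{eq:SCE}.

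The second step is to upgrade this vector-space isomorphism to an isomorphism of associative algebras. By Theorem~\ref{prop: contraction on Dpoly} the projection $\Phi_\natural$ intertwines the cup products at the cochain level, i.e.\ $\Phi_\natural(a \cup b) = \Phi_\natural(a) \cup \Phi_\natural(b)$ for all cochains $a, b$; together with the fact, also part of the contraction data, that $\Phi_\natural$ commutes with the total differentials $\llbracket d_F, - \rrbracket + d_{\mathscr{H}}$ and $\dF^{\U} + \id \otimes d_{\mathscr{H}}$, this says precisely that $\Phi_\natural$ is a morphism of dg associative algebras. The induced map on cohomology is therefore a morphism of associative algebras, and being bijective by the first step, it is an isomorphism of associative algebras.

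Before invoking these two steps I would record the routine but necessary checks that make the statement meaningful: that the cup products on both complexes are associative at the cochain level (on the $\D_{\poly}(F[1])$ side this is concatenation of polydifferential operators, and on $\OmegaF(\D_{\poly}(B))$ it is its transport under the coproduct, both associative since the relevant comultiplications are coassociative), and that the total differentials act as graded derivations of these products, so that the products descend to cup products on the two cohomologies $\H^\bullet(\tot_\oplus(\D_{\poly}(F[1])), \llbracket d_F, - \rrbracket + d_{\mathscr{H}})$ and $\H_{\CE}^\bullet(F, (\D_{\poly}(B), d_{\mathscr{H}}))$. These are standard for Hochschild-type complexes and require no new idea.

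I do not expect a genuine obstacle here: all of the substance is already contained in Theorem~\ref{prop: contraction on Dpoly}, whose proof supplies both the contraction (hence the quasi-isomorphism) and the multiplicativity of $\Phi_\natural$. The only point deserving care is purely bookkeeping, namely confirming that $\Phi_\natural$ respects the products \emph{on the nose} at the cochain level rather than merely up to homotopy; once that is in hand the passage to cohomology is automatic. In particular one does not need $\breve{\Psi}_\natural$ or $\breve{H}_\natural$ to be compatible with the products, which is fortunate since in general they are not, and which is exactly why the argument is phrased through the multiplicative map $\Phi_\natural$ rather than through its homotopy inverse.
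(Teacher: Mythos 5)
Your proposal is correct and matches the paper exactly: the paper presents this corollary as an immediate consequence of Theorem~\ref{prop: contraction on Dpoly}, deducing the bijection on cohomology from the contraction and the algebra-morphism property from the fact that $\Phi_\natural$ intertwines the cup products at the cochain level. Your additional remarks (the descent of the products to cohomology, and the observation that multiplicativity of $\breve{\Psi}_\natural$ is not needed) are accurate bookkeeping that the paper leaves implicit.
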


\begin{remark}
  In particular, when $F = 0$, both sides of~\eqref{Eqt:ContractDoplyF1} become the cochain complex $(\D_{\poly}(M), d_{\mathscr{H}})$ of polydifferential operators on $M$. Hence, both sides of \eqref{eq:SCE} become  the smooth Hochschild cohomology of $C^\infty(M)$.

  On the other hand, when $F = T_\k M$, the commutative dg algebra $(\Omega_F,d_F)$ becomes the de Rham dg algebra $(\Omega_M, d_{\operatorname{dR}})$, and the normal bundle $B$ is the rank zero vector bundle over $M$.
The right hand side of \eqref{eq:SCE} is simply the de Rham cohomology of $M$. Hence, by Corollary~\ref{Coro: Isom of ass algebras on Dpoly}, we obtain an isomorphism
  \[
   \Phi_\natural \colon  \H^\bullet\big(\tot\big(\D_{\poly}({T_\k [1]M})\big), \llbracket d_{\operatorname{dR}}, - \rrbracket  + d_{\mathscr{H}}\big) \xrightarrow{\cong} \H_{\operatorname{dR}}^\bullet(M).
  \]
  Namely, the Hochschild cohomology of the dg algebra $(\Omega_M, d_{\operatorname{dR}})$,
 defined as the direct sum total cohomology of the double complex $\big(\D_{\poly}({T_\k [1]M}), \llbracket d_{\operatorname{dR}}, - \rrbracket  + d_{\mathscr{H}}\big)$, is isomorphic to the de Rham cohomology of $M$. This statement is false if we use the ordinary Hochschild cohomology of the dg algebra $(\Omega_M, d_{\operatorname{dR}})$, i.e., the direct product total cohomology.  See~\cites{Chen, CFL, GJP, RWang} for details.
\end{remark}

\subsubsection{Proof of Theorem~\ref{Main thm}}
We are now ready to prove Theorems~\ref{Main thm} declared in Introduction. First let us recall the following
\begin{proposition}[\cite{CXX}*{Corollary 2.43}]\label{prop: isom on Tpoly}
  Let $F \subseteq {\TkM}$ be an integrable distribution. There is a canonical isomorphism of Gerstenhaber algebras
  \[
  \Phi \colon  \H^\bullet\big(\tot\big(\T_{\poly}(F[1])\big), L_{d_F}\big) \xrightarrow{\cong} \H^\bullet_{\CE}\big(F, \T_{\poly}(B)\big)
  \]
  from the cohomology of polyvector fields on the dg manifold $(F[1],d_F)$ to the Chevalley-Eilenberg hypercohomology of polyvector fields of the Lie pair $({\TkM}, F)$.
\end{proposition}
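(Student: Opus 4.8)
The plan is to mirror the construction behind Theorem~\ref{prop: contraction on Dpoly}, but in the polyvector-field setting, where it becomes considerably simpler: the underlying quasi-isomorphism together with compatibility with the wedge (cup) product will follow from a single application of the tensor trick, whereas compatibility of $\Phi$ with the Schouten--Nijenhuis bracket is the genuinely hard point and is where I would ultimately invoke~\cite{CXX}.

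First I would extract the degree-one contraction. Taking $k=1$ in Proposition~\ref{prop: coalgebra contraction on STF} yields a filtered contraction of $(\Gamma(T_{F[1]}), L_{d_F})$ onto $(\Omega_F(B), d_F^{\Bott})$ --- equivalently, of the degree-one polyvector fields $\T_{\poly}^1(F[1])$ onto $\Omega_F(\T_{\poly}^1(B))$ --- whose projection and inclusion are morphisms of $\Omega_F$-coalgebras; explicitly, on generators, $\Phi(\iota_a)=0$, $\Phi(\widehat{u})=\pr_B(u)$, and $\Psi(b)=\widehat{j(b)}$. Since $L_{d_F}$ is a derivation of the wedge product (equivalently, a coderivation of the canonical cocommutative coproduct on $\T_{\poly}(F[1])$), applying the tensor trick of Lemma~\ref{Lem: Tensor trick} in its graded-symmetric form to this contraction produces a contraction of total complexes
\[
(\tot^\bullet_\oplus(\T_{\poly}(F[1])), L_{d_F}) \;\rightleftarrows\; (\tot^\bullet(\Omega_F(\T_{\poly}(B))), d_F^{\Bott}).
\]
In contrast to Theorem~\ref{prop: contraction on Dpoly}, no perturbation step is required here: the differential on $\T_{\poly}(B)$ is trivial, so the target carries only $d_F^{\Bott}$, which the tensor trick reproduces directly. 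The projection $\Phi = \wedge^\bullet \Phi_1$ is manifestly a morphism of graded-commutative $\Omega_F$-algebras, and on generators it equals $\pr_B \circ \pi_\ast$, hence is independent of the auxiliary splitting and connections. Passing to cohomology, $\Phi$ thus induces a canonical isomorphism of vector spaces intertwining the wedge (cup) products.

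The main obstacle is the bracket. While $\tot^\bullet_\oplus(\T_{\poly}(F[1]))$ is a genuine dg Gerstenhaber algebra under the Schouten--Nijenhuis bracket, the Gerstenhaber bracket on $\H^\bullet_{\CE}(F, \T_{\poly}(B))$ is \emph{not} the naive one: as recalled in Section~\ref{Secpolyofpair}, it is defined by homotopy transfer following~\cite{BSX} and has no closed formula. To handle this I would transfer the dg Lie structure of the source along the contraction, using the homotopy and the standard sum-over-trees transfer formulas, and then identify the resulting transferred $L_\infty$-bracket with the Bandiera--Sti\'enon--Xu bracket on the target. The crux is to show that the higher transfer terms, built from nested insertions of the homotopy and the source bracket, reproduce exactly the corrections hard-wired into the BSX construction; once this matching is established, $\Phi$ becomes a morphism of the transferred structures and hence an isomorphism of Gerstenhaber algebras on cohomology. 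This matching is precisely the content of~\cite{CXX}*{Corollary 2.43}, which I would cite for the full verification, the remainder of the argument being the tensor-trick construction above.
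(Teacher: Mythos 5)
The paper offers no proof of this proposition at all: it is imported wholesale by the citation to \cite{CXX}*{Corollary 2.43}, so your reconstruction --- the $k=1$ case of Proposition~\ref{prop: coalgebra contraction on STF}, the (symmetrized) tensor trick with no perturbation step, multiplicativity of $\Phi=\wedge^\bullet\Phi_1$, and deferral of the Schouten-bracket compatibility with the homotopy-transferred Bandiera--Sti\'enon--Xu bracket to that same citation --- is consistent with, and correctly isolates the genuinely hard point of, what the paper relies on. The only caveat is that Lemma~\ref{Lem: Tensor trick} as stated applies to reduced tensor algebras, so your ``graded-symmetric form'' requires the symmetrized homotopy (for which the side condition $h^2=0$ need not survive), but this does not affect the quasi-isomorphism.
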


\begin{theorem}\label{Thm: KD for integrable distributions}
Let $F \subseteq {\TkM}$ be an integrable distribution. There is a commutative diagram:
  \[
   \begin{tikzcd}
   \H^\bullet\Big(\tot\big(\mathcal{T}_{\poly}({F[1]})\big), L_{d_F}\Big)  \ar{d}[left]{\Phi}[right]{\cong} \ar{rrr}{\hkr \circ \Td^{1/2}_{(F[1],d_F)}}[below]{\cong} &&& \H^\bullet\Big(\tot\big(\D_{\poly}({F[1]})\big), \llbracket d_F, - \rrbracket + d_{\mathscr{H}}\Big) \ar{d}[left]{\Phi_\natural}[right]{\cong} \\
   \H^\bullet_{\CE}\big(F,\mathcal{T}_{\poly}(B)\big)  \ar{rrr}{\hkr \circ \Td^{1/2}_{{\TkM}/F}}[below]{\cong} &&& \H^\bullet_{\CE}\big( F,(\D_{\poly}(B), d_{\mathscr{H}}) \big).
   \end{tikzcd}
  \]
\end{theorem}
\begin{proof}		
Since $\Phi_\natural\mid_{\D^{\leq 1}({F[1]})} = \Phi$ and the two types of Hochschild-Kostant-Rosenberg isomorphisms $\hkr$ for dg manifolds~\eqref{Eq: hkr for dg manifolds} and for Lie pairs~\eqref{Eq: hkr for Lie pairs} are defined by (skew-)symmetrization,
the projections $\Phi$ in Proposition~\ref{prop: isom on Tpoly} and $\Phi_\natural$ in Corollary~\ref{Coro: Isom of ass algebras on Dpoly} are compatible with the two isomorphisms $\hkr$, i.e., the following diagram commutes
  \[
   \begin{tikzcd}
   \H^\bullet\Big(\tot\big(\mathcal{T}_{\poly}({F[1]})\big), L_{d_F} \Big) \ar{d}[left]{\Phi} \ar{rr}{\hkr} && \H^\bullet\Big(\tot\big(\D_{\poly}({F[1]})\big), \llbracket d_F, - \rrbracket  + d_{\mathscr{H}}\Big) \ar{d}[left]{\Phi_\natural} \\
   \H^\bullet_{\CE}\big( F,\mathcal{T}_{\poly}(B) \big) \ar{rr}{\hkr} && \H^\bullet_{\CE}\big(F, (\D_{\poly}(B), d_{\mathscr{H}})\big).
   \end{tikzcd}
  \]
By Proposition~\ref{prop: Atiyah and Todd classes}, the projection $\Phi$ sends the Todd class $\Td_{(F[1], d_F)}$ of the dg manifold $(F[1],d_F)$ to the Todd class $\Td_{{\TkM}/F}$ of the Lie pair $({\TkM}, F)$.
Thus, the contraction operators by the two Todd classes are compatible with the projection $\Phi$, i.e., the following diagram commutes
  \[
   \begin{tikzcd}
   \H^\bullet\Big(\tot\big(\mathcal{T}_{\poly}({F[1]})\big), L_{d_F}\Big) \ar{d}[left]{\Phi} \ar{rr}{\Td^{1/2}_{({F[1]},d_F)}} && \H^\bullet\Big(\tot\big(\mathcal{T}_{\poly}({F[1]})\big), L_{d_F}\Big) \ar{d}[left]{\Phi} \\
   \H^\bullet_{\CE}\big(F,\mathcal{T}_{\poly}(B) \big)  \ar{rr}{  \Td^{1/2}_{{\TkM}/F}} && \H^\bullet_{\CE}\big(F,\mathcal{T}_{\poly}(B) \big).
   \end{tikzcd}
  \]
Combining the above two commutative diagrams, we conclude the proof.
\end{proof}

\subsection{Isomorphisms of Gerstenhaber algebras for perfect integrable distributions}
We now assume that $F$ is perfect. As a direct consequence of Theorems~\ref{thm:contraction-on-DM} and~\ref{prop: contraction on Dpoly}, we obtain the following contraction
\begin{equation}\label{prop: contraction of Dpoly for perfect ID}
\begin{tikzcd}
	\Big(\tot\big(\D_{\poly}({F[1]})\big), \llbracket d_F, - \rrbracket   + d_\mathscr{H}\Big) \arrow[loop left, distance=2em, start anchor={[yshift=-1ex]west}, end anchor={[yshift=1ex]west}]{}{\breve{H}_\natural} \arrow[r,yshift = 0.7ex, "\Phi_\natural"] & \Big(\tot\big(\OmegaF(\U_{\poly}(B))\big) , \dF ^{\mathcal{U}} + \id \otimes d_\mathscr{H}\Big) \arrow[l,yshift = -0.7ex, "\Psi_\natural"],
\end{tikzcd}
\end{equation}
where
\[
\U_{\poly}(B) = \bigoplus_{k \geq 0}\U^k_{\poly}(B)=\bigoplus_{k \geq 0}\otimes^k_R (\U(B)).
\]
Here we have used the assumption that $F$ is perfect, which implies the equality
\begin{align*}
 \breve{\Psi}_\natural &=  \sum_{n \geq 0}(H_\natural d_{\mathscr{H}})^n \Psi_\natural \\
  &= \Psi_\natural + \sum_{n\geq 1}(H_\natural d_{\mathscr{H}})^{n-1} H_\natural  d_{\mathscr{H}} \Psi_\natural\qquad \left(\text{by Theorem~\ref{thm:contraction-on-DM}}\right)\\
   &= \Psi_\natural + \sum_{n\geq 1}(H_\natural d_{\mathscr{H}})^{n-1}  H_\natural  \Psi_\natural  d_{\mathscr{H}} \qquad \left(\text{by the side condition $H_\natural \Psi_\natural = 0$}\right)\\
  &= \Psi_\natural.
\end{align*}

Recall that by $\mathfrak{B}$ we denote the dg Lie algebroid $F[1] \bowtie B \to F[1]$ (see Section \ref{Sec: DO for perfect ID}). The space $\Omega_F(\U(B))$, which is isomorphic to the universal enveloping algebra $\U(\mathfrak{B})$ of the dg Lie algebroid $\mathfrak{B}$,  carries a dg Hopf algebroid structure over $(\Omega_F, d_F)$.
Note that we have a natural isomorphism
\[
\tot\big(\Omega_F(\U_{\poly}(B))\big) \cong \tot\big(\oplus_{k\geq 0}\otimes_{\Omega_F}^{k}(\U(\mathfrak{B})[-1])\big).
\]
Thus the Gerstenhaber bracket on the right-hand side $\tot\big(\oplus_{k\geq 0} \otimes_{\Omega_F}^{k} (\U(\mathfrak{B})[-1])\big)$ induces a Gerstenhaber bracket on the left-hand side $\tot\big(\Omega_F(\U_{\poly}(B))\big)$, which can be expressed explicitly as follows: For any homogeneous $D \in \Omega_F(\U^u_{\poly}(B)), D^\prime \in \Omega_F(\U^v_{\poly}(B))$,
\begin{equation}\label{Eq: Def of Ger bracket}
\llbracket D, D^\prime \rrbracket := D \star D^\prime - (-1)^{(\abs{D}  -1)(\abs{D^\prime}  -1)} D^\prime \star D \in \Omega_F(\U^{u+v-1}_{\poly}(B) ),
\end{equation}
where
\begin{equation}\label{Eq: star product}
D \star D^\prime := \sum_{k=1}^u(-1)^{(\abs{D^\prime} -1)\dagger_k} d_1 \otimes \cdots \otimes d_{k-1} \otimes (\Delta^{v-1} d_k)\cdot D^\prime \otimes d_{k+1} \otimes \cdots \otimes d_{u},
\end{equation}
for any $D = d_1 \otimes \cdots \otimes d_u$ with homogeneous $d_1,\cdots,d_u \in \Omega_F( \U(B))$. Here $\dagger_k$ is defined to be $\abs{d_{k+1}}+\cdots+\abs{d_u}$ for any $1 \leq k \leq u$.
To understand the product $(\Delta^{v-1} d_k) \cdot D^\prime$ in $\Omega_F( \U^v_{\poly}(B))$ appeared in the above equation, one needs the compatibility axiom between the product and coproduct of the Hopf algebroid $\Omega_F(\U(B))$ over $(\Omega_F, d_F)$, for which we refer the reader to~\cite{Xu} for details.
We remind the reader that this Gerstenhaber bracket is \emph{not} the $\Omega_F$-linear extension of the Gerstenhaber bracket on $\U_{\poly}(B)$, since the product on $\Omega_F(\U(B))$ is \emph{not} $\Omega_F$-linear.

\begin{theorem}\label{prop: Isom of G-algebra of Dpoly for perfect ID}
  Let $F \subseteq {\TkM}$ be a perfect integrable distribution.
  \begin{compactenum}
  	\item The map $\Psi_\natural\colon \tot\big(\OmegaF(\U_{\poly}(B))\big) \to \tot(\D_{\poly}({F[1]}))$   as in~\eqref{prop: contraction of Dpoly for perfect ID} preserves the Gerstenhaber brackets:
  	\begin{equation*}
  	\Psi_{\natural}(\llbracket D,D^\prime \rrbracket) = \llbracket \Psi_{\natural}(D), \Psi_{\natural}(D^\prime) \rrbracket,\qquad \forall D,D'\in \tot\big(\OmegaF(\U_{\poly}(B))\big).
  	\end{equation*}
  	\item Passing to the cohomologies, $\Phi_\natural$ and $\Psi_\natural$  as in~\eqref{prop: contraction of Dpoly for perfect ID}   are isomorphisms of Gerstenhaber algebras, which are mutually inverse to each other:
  \[
  \begin{tikzcd}
 \H^\bullet\Big(\tot\big(\D_{\poly}({F[1]})\big), \llbracket d_F, - \rrbracket + d_{\mathscr{H}}\Big) \arrow[r,yshift = 0.7ex, "\Phi_\natural"] & \H^\bullet_{\CE}\Big(F, \big(\U_{\poly}(B), d_{\mathscr{H}}\big)\Big) \arrow[l,yshift = -0.7ex, "\Psi_\natural"] .
  \end{tikzcd}
  \]
  \end{compactenum}
\end{theorem}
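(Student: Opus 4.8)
The plan is to derive both statements from the structural results already at hand: in the perfect case the inclusion $\Psi_\natural = \mu$ is a morphism of dg Hopf algebroids over $(\Omega_F, d_F)$ by Theorem~\ref{thm:contraction-on-DM}, while $\Phi_\natural$ intertwines the associative cup products by Theorem~\ref{prop: contraction on Dpoly}. The bracket statement (1) will follow from the first fact, and the Gerstenhaber-algebra statement (2) will then follow formally from the contraction~\eqref{prop: contraction of Dpoly for perfect ID}.

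For part (1) I would first note that the Gerstenhaber bracket on $\tot^\bullet_\oplus(\D_{\poly}(F[1]))$ is governed by exactly the same combinatorics as the bracket~\eqref{Eq: Def of Ger bracket}--\eqref{Eq: star product} on $\tot^\bullet(\Omega_F(\U_{\poly}(B)))$. Indeed, unwinding the definition of the circle product of polydifferential operators on the graded manifold $F[1]$ shows that inserting $D' = D'_1\otimes\cdots\otimes D'_v$ into the $k$-th slot of $D = d_1\otimes\cdots\otimes d_u$ amounts to distributing $d_k$ over the $v$ arguments of $D'$ via the iterated coproduct and then composing slot-by-slot, i.e.\ to forming $(\Delta^{v-1}d_k)\cdot D'$ with $\cdot$ now the composition of differential operators; this is precisely~\eqref{Eq: star product}. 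Thus on both sides the bracket is the antisymmetrization of a star product built from the component-wise associative product and the iterated coproduct.

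It then remains to check that $\Psi_\natural$, extended component-wise to tensor powers as in~\eqref{prop: contraction of Dpoly for perfect ID}, intertwines the two star products. Since $\Psi_\natural$ respects the comultiplication it commutes with $\Delta^{v-1}$ by coassociativity, and since it respects the multiplication it commutes with the slot-by-slot product, giving
\[
\Psi_\natural\big((\Delta^{v-1}d_k)\cdot D'\big) = \big(\Delta^{v-1}\Psi_\natural(d_k)\big)\cdot \Psi_\natural(D').
\]
As $\Psi_\natural$ preserves total degree, the Koszul signs $\dagger_k$ in~\eqref{Eq: star product} are unchanged, whence $\Psi_\natural(D\star D') = \Psi_\natural(D)\star\Psi_\natural(D')$ and, by~\eqref{Eq: Def of Ger bracket}, $\Psi_\natural(\llbracket D,D'\rrbracket) = \llbracket\Psi_\natural(D),\Psi_\natural(D')\rrbracket$. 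I expect the main obstacle to lie exactly here: the product on $\Omega_F(\U(B))$ is \emph{not} $\Omega_F$-linear (see~\eqref{Eq: product}), so the bracket~\eqref{Eq: Def of Ger bracket} is genuinely not the naive $\Omega_F$-linear extension of the one on $\U_{\poly}(B)$; it is the full dg Hopf-algebroid morphism property of $\Psi_\natural$ from Theorem~\ref{thm:contraction-on-DM}, rather than mere $\Omega_F$-linearity, that forces the star products to intertwine, and keeping track of this non-linearity through the insertion formula is the delicate bookkeeping.

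For part (2) I would use the contraction~\eqref{prop: contraction of Dpoly for perfect ID}: its side conditions give $\Phi_\natural\circ\Psi_\natural = \id$ on the nose, while $\id - \Psi_\natural\circ\Phi_\natural$ is the $(\llbracket d_F,-\rrbracket + d_{\mathscr{H}})$-boundary determined by $\breve{H}_\natural$, so on cohomology $[\Phi_\natural]$ and $[\Psi_\natural]$ are mutually inverse isomorphisms. By Theorem~\ref{prop: contraction on Dpoly} the chain map $\Phi_\natural$ respects the cup products, and $\Psi_\natural$ does as well because it is defined component-wise and both cup products are concatenation of tensor factors. Combining this with part (1), $[\Psi_\natural]$ is a homomorphism for both the cup product and the Gerstenhaber bracket, i.e.\ an isomorphism of Gerstenhaber algebras; its inverse $[\Phi_\natural]$ is then automatically one as well, which finishes the proof.
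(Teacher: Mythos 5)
Your proposal is correct and takes essentially the same route as the paper: the paper likewise reduces everything to statement (1), which it proves by the identical computation showing $\Psi_\natural(D\star D')=\Psi_\natural(D)\star\Psi_\natural(D')$ using the fact that $\Psi_\natural=\mu$ preserves multiplications and comultiplications (Theorem~\ref{thm:contraction-on-DM}), and then deduces (2) from the contraction~\eqref{prop: contraction of Dpoly for perfect ID} together with the product-compatibility of $\Phi_\natural$. Your added remark pinpointing the non-$\Omega_F$-linearity of the product on $\Omega_F(\U(B))$ as the reason the full Hopf-algebroid morphism property is needed matches the paper's own caveat preceding the theorem.
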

\begin{proof}
 It suffices to prove the first statement.
Note that the map $\Psi_\natural \colon \Omega_F(\U(B)) \to \D(F[1])$ preserves both multiplications and comultiplications according to Theorem~\ref{thm:contraction-on-DM}. Thus, it follows that for any $D = d_1 \otimes \cdots \otimes d_u \in \Omega_F(\U^u_{\poly}(B)) $ with homogeneous $d_1,\cdots,d_u \in \Omega_F(\U^1_{\poly}(B))$ and  $D^\prime \in \Omega_F(\U^v_{\poly}(B))$, we have
  \begin{align*}
    \Psi_\natural (D \star D^\prime) &= \sum_{k=1}^u(-1)^{\dagger_k(\abs{D^\prime} -1)} \Psi_{\natural}(d_1) \otimes \cdots \otimes \Psi_{\natural}((\Delta^{v-1} d_k)\cdot D^\prime) \otimes \cdots \otimes \Psi_{\natural}(d_{u}) \\
    &=  \sum_{k=1}^u(-1)^{\dagger_k(\abs{D^\prime} -1)}\Psi_{\natural}(d_1) \otimes \cdots \otimes \Delta^{v-1}(\Psi_{\natural}(d_k))\cdot \Psi_\natural(D^\prime) \otimes \cdots \otimes \Psi_{\natural}(d_{u}) \\
    &= \Psi_{\natural}(D) \star \Psi_{\natural}(D^\prime).
  \end{align*}
Hence, we have
\[
  \Psi_{\natural}(\llbracket D,D^\prime \rrbracket) = \llbracket \Psi_{\natural}(D), \Psi_{\natural}(D^\prime) \rrbracket.
\]
That is, $\Psi_\natural$ is a morphism of Gerstenhaber algebras.
\end{proof}

\begin{remark}\label{Main remark}
In general, without the perfect assumption on $F$,  the space $\Omega_F(\D(B))$ of differential operators on the Lie pair $({\TkM}, F)$ does \textit{not} admit an associative algebra structure.
Indeed, it was proved by Vitagliano in~\cite{Luca} that  $\Omega_F(\D(B))$ admits an $A_\infty$-algebra structure. One \textit{cannot} define a Gerstenhaber algebra structure directly on the total cohomology $\H_{\CE}^\bullet(F, (\D_{\poly}(B), d_{\mathscr{H}}))$ using Equations~\eqref{Eq: Def of Ger bracket} and~\eqref{Eq: star product}.
However, Bandiera, Sti\'{e}non and Xu proved in~\cite{BSX} that there exists  a canonical Gerstenhaber algebra structure on $\H_{\CE}^\bullet(F, (\D_{\poly}(B), d_{\mathscr{H}}))$ by applying the homotopy transfer theorem to the Dolgushev-Fedosov contraction for polydifferential operators on Lie pairs. When endowed with this Gerstenhaber algebra structure, we expect that
 \[
  \Phi_\natural \colon  \H^\bullet\big(\tot\big(\D_{\poly}({F[1]})\big), \llbracket d_F, - \rrbracket  + d_{\mathscr{H}}\big) \xrightarrow{\cong} \H_{\CE}^\bullet\big(F, \big(\D_{\poly}(B), d_{\mathscr{H}}\big)\big)
  \]
 is still an isomorphism of Gerstenhaber algebras. We would like to return to this question in the future.
\end{remark}

\subsection{Application to complex manifolds}
As an application, consider a complex manifold $X$.
The subbundle $F = T_X^{0,1} \subset T_\C X$ is a perfect integrable distribution, and the quotient bundle $B := T_\C X/T_X^{0,1}$ is naturally identified with $T^{1,0}_X$. Moreover, the Chevalley-Eilenberg differential associated with the Bott $F$-connection on $T_X^{1,0}$ becomes the Dolbeault operator
\[
 \bar{\partial}\colon \Omega_X^{0,\bullet}(T_X^{1,0}) \rightarrow \Omega_X^{0,\bullet+1}(T_X^{1,0}).
\]
In this setting, the space $\T_{\poly}(B)$ of polyvector fields of the Lie pair $(T_\C X, T_X^{0,1})$ coincides with the space $\wedge T_X^{1,0}$. The cochain complex $\big(\tot\left(\Omega_F(\mathcal{T}_{\poly}(B)) \right), d_{F}^{\operatorname{Bott}}\big)$ becomes  $\big(\tot\big(\Omega_X^{0,\bullet} (\mathcal{T}_{\poly}(X))\big), \bar{\partial}\big)$, which is indeed the Dolbeault resolution of the complex of sheaves of $\O_X$-modules
\[
  0 \to \O_X \xrightarrow{0} \mathcal{T}_{\poly}^1(X) \xrightarrow{0} \mathcal{T}_{\poly}^2(X) \xrightarrow{0} \mathcal{T}^3_{\poly}(X) \to \cdots.
\]
Thus, the cohomology of the complex $\big(\tot\left(\Omega_F(\mathcal{T}_{\poly}(B))\right), d_F^{\Bott}\big)$ is isomorphic to the sheaf cohomology of $\mathcal{T}_{\poly}(X)$, i.e.,
\[
\H^\bullet_{\CE}(F, \T_{\poly}(B)) \cong \H^\bullet(X, \mathcal{T}_{\poly}(X)).
\]
On the other hand, by the canonical identification
\[
 \D(B) = \frac{\U(T_\C X)}{\U(T_\C X)\Gamma(T_X^{0,1})} \cong \U(T_X^{1,0}),
\]
the cochain complex $\big(\tot\big(\Omega_F(\D_{\poly}(B))\big), d_F^\U + \id \otimes d_{\mathscr{H}}\big)$ becomes $\big(\tot\big(\Omega_X^{0,\bullet}(\mathscr{D}_{\poly}(X))\big), \bar{\partial} + \id \otimes d_{\mathscr{H}}\big)$, that is, the Dolbeault resolution of the complex of sheaves
\[
 0 \rightarrow \O_X \rightarrow \mathscr{D}^1_{\poly}(X)\xrightarrow{d_{\mathscr{H}}} \mathscr{D}^2_{\poly}(X)  \xrightarrow{d_{\mathscr{H}}} \mathscr{D}^3_{\poly}(X) \rightarrow \cdots
\]
of holomorphic polydifferential operators over $X$. Its total cohomology is isomorphic to the Hochschild cohomology of the complex manifold $X$ (cf.~\cites{Caldararu, Yekutieli}), i.e.,
\[
 \H^\bullet_{\CE}\big(F, (\D_{\poly}(B), d_{\mathscr{H}})\big) \cong \hochschildcohomology{\bullet}(X).
\]
Applying Theorem~\ref{Thm: KD for integrable distributions} to the perfect integrable distribution $T_X^{0,1} \subset T_\mathbb{C} X$,  we obtain the following 
\begin{theorem}\label{Thm: Complex manifolds}
  Let $(T_X^{0,1}[1],\bar{\partial})$ be the dg manifold arising from a complex manifold $X$. We have the following commutative diagram of cohomology groups
  \[
   \begin{tikzcd}
   \H^\bullet\Big(\tot\big(\mathcal{T}_{\poly}(T_X^{0,1}[1])\big), L_{\bar{\partial}}\Big) \ar{d}[left]{\Phi}[right]{\cong} \ar{rrr}{\hkr \circ \Td^{1/2}_{(T^{0,1}_X[1], \bar{\partial})}} &&& \H^\bullet\Big(\tot\big(\D_{\poly}(T^{0,1}_X[1])\big), \llbracket \bar{\partial}, - \rrbracket  + d_{\mathscr{H}}\Big) \ar{d}[right]{\Phi_\natural}[left]{\cong} \\
   \H^\bullet\big(X, \mathcal{T}_{\poly}(X)\big) \ar{rrr}{{\hkr}~ \circ~ {\Td^{1/2}_{T_\C X/T_X^{0,1}}}} &&& \hochschildcohomology{\bullet}(X).
   \end{tikzcd}
  \]
\end{theorem}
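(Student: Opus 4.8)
The plan is to obtain the diagram as the specialization of Theorem~\ref{Thm: KD for integrable distributions} to the perfect integrable distribution $F = T_X^{0,1} \subseteq T_\C X$, followed by translating the two Lie-pair hypercohomologies appearing in its bottom row into the standard complex-analytic invariants of $X$. First I would record that $T_\C X = T_X^{1,0} \bowtie T_X^{0,1}$ is a matched pair, so $F = T_X^{0,1}$ is perfect with transverse integrable distribution $T_X^{1,0}$, and the normal bundle $B = T_\C X/T_X^{0,1}$ is canonically identified with $T_X^{1,0}$. Under this identification the Chevalley-Eilenberg differential of the Bott $F$-connection on $B$ becomes the Dolbeault operator $\bar\partial$ on $\Omega_X^{0,\bullet}(T_X^{1,0})$, and $d_F$ itself is $\bar\partial$ on $\Omega_X^{0,\bullet}$. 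With $F$, $B$, $d_F$, $d_F^{\Bott}$, and $d_F^\U$ all rewritten in these terms, the top row and the two vertical maps of~\eqref{Eq: main CD} are exactly those of the claimed diagram, and the horizontal Hochschild--Kostant--Rosenberg--Todd composites $\hkr\circ\Td^{1/2}$ specialize to $\hkr\circ\Td^{1/2}_{(T_X^{0,1}[1],\bar\partial)}$ and $\hkr\circ\Td^{1/2}_{T_\C X/T_X^{0,1}}$.

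The second step is to identify the two entries in the bottom row. For the left entry, $\bigl(\tot(\Omega_F(\mathcal{T}_{\poly}(B))), d_F^{\Bott}\bigr)$ becomes $\bigl(\tot(\Omega_X^{0,\bullet}(\mathcal{T}_{\poly}(X))), \bar\partial\bigr)$, the Dolbeault resolution of the complex of sheaves $\mathcal{T}_{\poly}^\bullet(X)$ with zero differentials; hence its hypercohomology is the sheaf cohomology $\H^\bullet(X, \mathcal{T}_{\poly}(X))$. For the right entry, using $\D(B)\cong \U(T_X^{1,0})$ one sees that $\bigl(\tot(\Omega_F(\D_{\poly}(B))), d_F^\U + \id\otimes d_{\mathscr{H}}\bigr)$ is the Dolbeault resolution of the complex of sheaves of holomorphic polydifferential operators $\mathscr{D}_{\poly}^\bullet(X)$, so by Yekutieli's description $HH^\bullet(X) = \Ext_{\cO_{X\times X}}^\bullet(\cO_\Delta,\cO_\Delta)$ its total cohomology is $HH^\bullet(X)$. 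Because these two identifications are built from the bundle isomorphism $B\cong T_X^{1,0}$, they are compatible with the respective $\hkr$ and $\Td^{1/2}$ operators by naturality, so the bottom row of~\eqref{Eq: main CD} transports verbatim to the bottom row of the claimed diagram, and the commutativity of the square is inherited directly from Theorem~\ref{Thm: KD for integrable distributions}.

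Finally, I would upgrade the two vertical isomorphisms to isomorphisms of Gerstenhaber algebras. Since $F$ is perfect, Theorem~\ref{prop: Isom of G-algebra of Dpoly for perfect ID} gives that $\Phi_\natural$ on the right is an isomorphism of Gerstenhaber algebras, while $\Phi$ on the left is an isomorphism of Gerstenhaber algebras by~\eqref{Eqt:isoHTpolyFoneTpolyB} (Proposition~\ref{prop: isom on Tpoly}). The only genuinely nonformal input is the identification of the right-hand bottom entry with $HH^\bullet(X)$: it rests on Yekutieli's theorem that the Dolbeault resolution of the sheaf complex of holomorphic polydifferential operators computes $\Ext_{\cO_{X\times X}}^\bullet(\cO_\Delta,\cO_\Delta)$, which I would invoke rather than reprove. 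Everything else is bookkeeping that the constructions of Section~\ref{Sec: Diff operators} were designed to make automatic, so the proof reduces to assembling Theorem~\ref{Thm: KD for integrable distributions}, the perfectness statements, and these standard complex-geometric identifications.
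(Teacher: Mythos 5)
Your proposal is correct and follows essentially the same route as the paper: specialize Theorem~\ref{Thm: KD for integrable distributions} to the perfect integrable distribution $T_X^{0,1}\subset T_\C X$, identify the bottom-row complexes as the Dolbeault resolutions of $\mathcal{T}_{\poly}(X)$ and of the holomorphic polydifferential operators (citing Yekutieli/C\u{a}ld\u{a}raru for $HH^\bullet(X)$), and invoke Proposition~\ref{prop: isom on Tpoly} and Theorem~\ref{prop: Isom of G-algebra of Dpoly for perfect ID} for the vertical maps. Your closing remarks on the Gerstenhaber structure belong, in the paper's organization, to the proof of Theorem~\ref{main corollary} rather than of the diagram itself, but this is only a matter of bookkeeping.
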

From this theorem, we conclude that the Duflo-Kontsevich theorem for complex manifolds~\cites{Kon, CV} (see also \cite{LSXpair}) is a direct consequence of the Duflo-Kontsevich type isomorphism (Theorem~\ref{prop: KD for dg manifold}) for the dg manifold $(T_X^{0,1}[1],\bar{\partial})$.
\begin{theorem}\label{main corollary}
  For every complex manifold $X$, the composition
  \begin{equation}\label{Eq: KD isomorphism for complex manifolds}
   \hkr \circ  \Td_{T_\mathbb{C} X/T_X^{0,1}} ^{1/2}\colon \H^\bullet\big(X, \mathcal{T}_{\poly}(X)\big) \xrightarrow{\cong} \hochschildcohomology{\bullet}(X)
  \end{equation}
  is an isomorphism of Gerstenhaber algebras, where the square root of the Todd class
  \[
    \Td_{T_\mathbb{C} X/T_X^{0,1}} \in \bigoplus_{k \geq 0} \H^k(X, \Omega_X^k)
  \]
  acts on $\H^\bullet(X, \mathcal{T}_{\poly}(X))$ by contraction.
\end{theorem}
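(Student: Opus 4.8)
The plan is to read the statement directly off the commutative diagram established in Theorem~\ref{Thm: Complex manifolds}, recognizing the bottom horizontal arrow as a composite of three maps each of which is an isomorphism of Gerstenhaber algebras. The decisive structural feature here is that for a complex manifold $X$ the distribution $F = T_X^{0,1} \subset T_\C X$ is \emph{perfect}, with $T_X^{1,0}$ a transversal integrable distribution and $B = T_\C X / T_X^{0,1} \cong T_X^{1,0}$; it is exactly this perfectness that upgrades the relevant maps from morphisms of associative algebras to morphisms of Gerstenhaber algebras.

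First I would invoke the Kontsevich--Duflo type theorem for dg manifolds (Theorem~\ref{prop: KD for dg manifold}) applied to $(T_X^{0,1}[1], \bar{\partial})$, which asserts that the top horizontal arrow $\hkr \circ \Td^{1/2}_{(T_X^{0,1}[1], \bar{\partial})}$ is an isomorphism of Gerstenhaber algebras. Next, since $F$ is perfect, the left vertical arrow $\Phi$ is an isomorphism of Gerstenhaber algebras by Proposition~\ref{prop: isom on Tpoly}, and the right vertical arrow $\Phi_\natural$ is an isomorphism of Gerstenhaber algebras by Theorem~\ref{prop: Isom of G-algebra of Dpoly for perfect ID}. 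Commutativity of the square in Theorem~\ref{Thm: Complex manifolds} then gives
\[
\hkr \circ \Td^{1/2}_{T_\C X/T_X^{0,1}} = \Phi_\natural \circ \left( \hkr \circ \Td^{1/2}_{(T_X^{0,1}[1], \bar{\partial})} \right) \circ \Phi^{-1},
\]
a composite of three isomorphisms of Gerstenhaber algebras, hence itself one.

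It then remains only to match source and target with the objects appearing in the statement. Under the identifications $\H^\bullet_{\CE}(F, \mathcal{T}_{\poly}(B)) \cong \H^\bullet(X, \mathcal{T}_{\poly}(X))$ and $\H^\bullet_{\CE}(F, (\D_{\poly}(B), d_{\mathscr{H}})) \cong HH^\bullet(X)$ recalled in the preceding discussion, the bottom arrow of the diagram is precisely $\hkr \circ (\Td_{T_\C X/T_X^{0,1}})^{1/2}$, and the class $\Td_{T_\C X/T_X^{0,1}}$ lands in $\bigoplus_{k \geq 0} \H^k(X, \Omega_X^k)$ and acts by contraction. I do not expect a genuine computational obstacle, as all the homological work has already been carried out upstream; the single point that must not be overlooked is that $\Phi_\natural$ is an isomorphism of Gerstenhaber algebras \emph{only} under the perfectness hypothesis, since in general it is merely an isomorphism of associative algebras by Corollary~\ref{Coro: Isom of ass algebras on Dpoly}. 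This is exactly why the argument succeeds for complex manifolds, where $T_X^{0,1}$ is perfect.
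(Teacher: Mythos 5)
Your proposal is correct and follows essentially the same route as the paper: invoke Theorem~\ref{prop: KD for dg manifold} for the top arrow, Proposition~\ref{prop: isom on Tpoly} and Theorem~\ref{prop: Isom of G-algebra of Dpoly for perfect ID} for the vertical arrows, and conclude by the commutativity of the diagram in Theorem~\ref{Thm: Complex manifolds}. The only slight imprecision is that Proposition~\ref{prop: isom on Tpoly} makes $\Phi$ a Gerstenhaber isomorphism for \emph{any} integrable distribution, so perfectness is needed only for $\Phi_\natural$, exactly as you note at the end.
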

\begin{proof}
       By the Duflo-Kontsevich type isomorphism in Theorem~\ref{prop: KD for dg manifold} for the dg manifold $(T_X^{0,1}[1], \bar{\partial})$,  the map
       \[
       \hkr \circ \Td^{1/2}_{(T_X^{0,1}[1], \bar{\partial})} \colon \H^\bullet\big(\tot\big(\mathcal{T}_{\poly}(T_X^{0,1}[1])\big), L_{\bar{\partial}}\big) \xrightarrow{\cong}  \H^\bullet\big(\tot\big(\D_{\poly}(T^{0,1}_X[1])\big), \llbracket \bar{\partial}, - \rrbracket  + d_{\mathscr{H}}\big)
       \]
       is an isomorphism of Gerstenhaber algebras.
       Applying Proposition~\ref{prop: isom on Tpoly} and Theorem~\ref{prop: Isom of G-algebra of Dpoly for perfect ID}  to the corresponding perfect integrable distribution $T_X^{0,1} \subset T_\C X$, we see that both
       \[
        \Phi \colon  \H^\bullet\big(\tot\big(\mathcal{T}_{\poly}(T_X^{0,1}[1])\big), L_{\bar{\partial}}\big) \xrightarrow{\cong}  \H^\bullet\big(X, \mathcal{T}_{\poly}(X)\big),
       \]
       and
       \[
        \Phi_\natural \colon \H^\bullet\big(\tot\big(\D_{\poly}(T^{0,1}_X[1])\big), \llbracket \bar{\partial}, - \rrbracket  + d_{\mathscr{H}}\big) \xrightarrow{\cong} \hochschildcohomology{\bullet}(X)
       \]
       are isomorphisms of Gerstenhaber algebras.
       Now the commutative diagram in Theorem~\ref{Thm: Complex manifolds} implies that the map in~\eqref{Eq: KD isomorphism for complex manifolds} must be an isomorphism of Gerstenhaber algebras as well.
\end{proof}

\begin{remark}
  The Duflo-Kontsevich theorem for complex manifolds is due to Kontsevich~\cite{Kon}---where only the associative algebra structures were addressed. Calaque and Van den Bergh proved the isomorphism of Gerstenhaber algebras for any smooth algebraic variety $X$ in~\cite{CV};
  Liao, Sti\'{e}non and Xu gave a different proof for any complex manifold in~\cite{LSXpair} via formality for Lie pairs.
Note that the Todd class $\Td_{T_{\mathbb{C}}X/T_X^{0,1}}$ of the Lie pair $(T_{\mathbb{C}}X, T_X^{0,1})$ coincides with the Todd class $\Td_X$ of $X$, when $X$ is a  compact K\"{a}hler manifold, or is algebraic and proper.
\end{remark}

\appendix
\section{The homological perturbation lemma}
A contraction of  cochain ($\k$-)complexes $(P,\delta)$ onto  $(T,d)$ consists of $\k$-linear maps $\phi$, $\psi$, and $h$ symbolized by a diagram
\begin{equation}\label{Eqt:contractionDatum}
\begin{tikzcd}
(P, \delta) \arrow[loop left, distance=2em, start anchor={[yshift=-1ex]west}, end anchor={[yshift=1ex]west}]{}{h } \arrow[r,yshift = 0.7ex, "\phi"] & (T , d) \arrow[l,yshift = -0.7ex, "\psi"],
\end{tikzcd}
\end{equation}
where $\phi$ and $\psi$ are cochain maps  and $h \colon P \to P$ is of degree $(-1)$, satisfying the homotopy retraction relations
\begin{align*}
\phi \psi &=\id_{T}, & \psi  \phi &=\id_P+h  \delta+\delta h,
\end{align*}
and the side conditions
\begin{align*}
\phi  h &=0, & h \psi &=0, &  h^2 &= 0.
\end{align*}
A \textit{perturbation} of the differential $\delta$ is a linear map $\varrho\colon P \to P[1]$ such that $\delta+\varrho$ is a new differential on $P$.
The following perturbation lemma is standard. See~\cites{Brown, HK}.
\begin{lemma}\label{Lem: OPT}
  Assume that the perturbation $\varrho$ satisfies the following constraints
  \begin{align}\label{Eq: perturb constraints}
    \cup_n \ker((h\varrho)^n\psi) &= T, & \cup_n \ker(\phi (\varrho  h)^n) &= P, & \cup_n \ker(h (\varrho h)^n) &= P.
  \end{align}
  Then the series
\begin{align}
	\vartheta &:=\sum_{k=0}^\infty \phi (h\varrho)^k\varrho \psi, & 	\phi_\flat &:=\sum_{k=0}^\infty \phi (\varrho h)^k, \label{Eq: Def of thetaandphi}\\
	\psi_\flat &:=\sum_{k=0}^\infty (h \varrho)^k \psi, & h_\flat &:=\sum_{k=0}^\infty h (\varrho h)^k \label{Eq: Def of psiandh}
\end{align}
all converge, and the datum
\[
	\begin{tikzcd}
	(P, \delta  +\varrho) \arrow[loop left, distance=2em, start anchor={[yshift=-1ex]west}, end anchor={[yshift=1ex]west}]{}{h_\flat } \arrow[r,yshift = 0.7ex, "\phi_\flat"] & (T ,  d+\vartheta) \arrow[l,yshift = -0.7ex, "\psi_\flat"]
	\end{tikzcd}
\]
constitutes a new contraction.
\end{lemma}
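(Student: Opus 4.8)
The plan is to prove the lemma purely formally, using the three kernel hypotheses only to make sense of the infinite series. First I would record the auxiliary identities that the original contraction forces beyond the listed relations: since $\phi$ and $\psi$ are cochain maps one has $\phi\delta=d\phi$ and $\delta\psi=\psi d$, and multiplying $\psi\phi=\id_P+h\delta+\delta h$ by $h$, $\phi$, $\psi$ and invoking $\phi h=h\psi=h^2=0$ yields the annihilation relations $h\delta h=-h$, $\phi\delta h=0$, $h\delta\psi=0$. The hypothesis that $\delta+\varrho$ is a differential unwinds to the Maurer--Cartan identity $\delta\varrho+\varrho\delta+\varrho^2=0$, which is the only place the perturbation enters.

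For convergence, the assumptions $\cup_n\ker((h\varrho)^n\psi)=T$, $\cup_n\ker(\phi(\varrho h)^n)=P$ and $\cup_n\ker(h(\varrho h)^n)=P$ say precisely that, evaluated on any fixed element, the tails of the four series vanish; hence $\vartheta,\phi_\flat,\psi_\flat,h_\flat$ are well-defined, and every manipulation below, applied to an element, involves only finite sums. This licenses me to work with the formal resolvents $\Sigma:=(\id-h\varrho)^{-1}=\sum_k(h\varrho)^k$ and $\Sigma':=(\id-\varrho h)^{-1}=\sum_k(\varrho h)^k$, for which $\Sigma=\id+h\varrho\Sigma=\id+\Sigma h\varrho$, $\Sigma'=\id+\varrho h\Sigma'$, $h\Sigma'=\Sigma h$, and $\varrho\Sigma=\Sigma'\varrho$ all hold. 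In these terms the transferred data acquire the closed forms $\psi_\flat=\Sigma\psi$, $\phi_\flat=\phi\Sigma'$, $h_\flat=\Sigma h=h\Sigma'$, and $\vartheta=\phi\varrho\psi_\flat=\phi_\flat\varrho\psi$, and one reads off the useful facts $\phi_\flat h=0$ and $h\psi_\flat=0$ directly from the resolvent expansions together with $\phi h=h\psi=h^2=0$.

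I would then verify the contraction axioms one at a time. The side conditions are immediate: $\phi_\flat h_\flat=\phi_\flat h\Sigma'=0$, $h_\flat\psi_\flat=\Sigma h\psi_\flat=0$, and $h_\flat^2=\Sigma h\cdot h\Sigma'=\Sigma h^2\Sigma'=0$. The retraction $\phi_\flat\psi_\flat=\id_T$ reduces to $\phi_\flat\psi$ via $\phi_\flat h=0$, and $\phi_\flat\psi=\phi\psi=\id_T$ because $\phi_\flat-\phi=\phi_\flat\varrho h$ annihilates $\psi$ (as $h\psi=0$). For the cochain-map property of $\psi_\flat$ one substitutes $\delta h=\psi\phi-\id-h\delta$ into $(\delta+\varrho)\psi_\flat$ and uses the Maurer--Cartan identity in the form $-h\delta\varrho=h\varrho(\delta+\varrho)$ to factor out $(\id-h\varrho)$, obtaining $(\delta+\varrho)\psi_\flat=\psi_\flat(d+\vartheta)$; the computation for $\phi_\flat$ is dual. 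That $(d+\vartheta)^2=0$ then follows formally from the surjectivity of the cochain map $\phi_\flat$.

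The step I expect to require the most care is the perturbed homotopy relation $\psi_\flat\phi_\flat=\id_P+h_\flat(\delta+\varrho)+(\delta+\varrho)h_\flat$. Writing $h_\flat=\Sigma h=h\Sigma'$, reducing the right-hand side with $h\delta=\psi\phi-\id-\delta h$ and $\delta h=\psi\phi-\id-h\delta$, and collapsing the purely perturbative pieces via $-\Sigma+\Sigma h\varrho=-\id$ and $-\Sigma'+\varrho h\Sigma'=-\id$, the claim becomes the operator identity $\Sigma\psi\phi\Sigma'=\Sigma+\Sigma'-\id+\Sigma h\delta+\delta h\Sigma'$. I would establish this by conjugating: multiplying on the left by $\Sigma^{-1}=\id-h\varrho$ and on the right by $\Sigma'^{-1}=\id-\varrho h$ turns the left side back into the unperturbed relation $\psi\phi=\id+h\delta+\delta h$, while the Maurer--Cartan identity $\delta\varrho=-\varrho\delta-\varrho^2$ together with $h^2=0$ makes every remaining $\varrho$-dependent term cancel in pairs. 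This conjugation trick --- pushing the whole perturbation into the Maurer--Cartan relation and letting it telescope --- is the crux of the lemma; once it is in place, the four transferred maps assemble into the asserted contraction and the convergence of the series is guaranteed by the stated kernel hypotheses.
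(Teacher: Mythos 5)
The paper does not prove this lemma at all --- it is quoted verbatim from Manetti's article \cite{Manetti} as a known result --- so there is no in-paper argument to compare against; your proposal has to stand on its own, and it does. Your argument is the classical proof of the homological perturbation lemma, organized around the resolvents $\Sigma=(\id-h\varrho)^{-1}$ and $\Sigma'=(\id-\varrho h)^{-1}$: the auxiliary identities $h\delta h=-h$, $\phi\delta h=0$, $h\delta\psi=0$ are correctly derived, the side conditions and $\phi_\flat\psi_\flat=\id_T$ follow exactly as you say, the chain-map identity factors through $(\id-h\varrho)(\delta+\varrho)\Sigma=\delta+\psi\phi\varrho\Sigma$ via the Maurer--Cartan relation, and the reduction of the perturbed homotopy identity to $\Sigma\psi\phi\Sigma'=\Sigma+\Sigma'-\id+\Sigma h\delta+\delta h\Sigma'$ followed by conjugation with $\id-h\varrho$ and $\id-\varrho h$ checks out (the leftover term is $-h(\varrho^2+\delta\varrho+\varrho\delta)h=0$). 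Two small points are worth making explicit. First, the legitimacy of treating $\Sigma$ and $\Sigma'$ as honest two-sided inverses everywhere on $P$ is not quite ``four series converge'': it follows from the closed forms $\Sigma=\id+h_\flat\varrho$ and $\Sigma'=\id+\varrho h_\flat$ together with the nesting $\ker\bigl(h(\varrho h)^n\bigr)\subseteq\ker\bigl(h(\varrho h)^{n+1}\bigr)$, which makes $h_\flat$ locally a finite sum on all of $P$; spelling this out would close the only gap a referee might poke at. Second, your closed form $\vartheta=\phi\varrho\psi_\flat=\phi_\flat\varrho\psi=\sum_k\phi(\varrho h)^k\varrho\psi$ is the correct transferred differential; the formula $\sum_k\phi(h\varrho)^k\varrho\psi$ printed in the statement collapses to $\phi\varrho\psi$ because $\phi h=0$, so it is a typo that your version silently (and correctly) repairs.
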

A particular class arises from perturbation of filtered complexes.
 Suppose that the contraction~\eqref{Eqt:contractionDatum} is increasingly filtered (cf.~\cite{EM}), that is, $P, T$ are increasingly filtered, and the maps $\phi,\psi, h$ preserve the filtrations.
An increasing filtration on a cochain complex $P$
\[
\cdots \subseteq \mathcal{F}^{n-1}P \subseteq \mathcal{F}^nP \subseteq \mathcal{F}^{n+1}P \subset \cdots
\]
 is said to be \emph{exhaustive} if $P = \cup_{n} \mathcal{F}^n P$ and \emph{bounded from below} if there exists an integer $m$ such that $\mathcal{F}^kP = 0$ for any $k \leq m$.

 Assume further that the filtration of $P$ and $T$ in the contraction~\eqref{Eqt:contractionDatum} are exhaustive and bounded below. If the perturbation $\varrho$ of the differential $\delta$ on $P$ lowers the filtration by $1$, that is,  $\varrho(\mathcal{F}^{n}P) \subseteq \mathcal{F}^{n-1}P$, then it is clear that all constraints in~\eqref{Eq: perturb constraints} hold.
 Applying Lemma~\ref{Lem: OPT}, one has the following filtered perturbation lemma. See~\cites{Brown, Manettibook}.
\begin{lemma}\label{Lem:HP}
 Suppose that the contraction in \eqref{Eqt:contractionDatum} is increasingly filtered and that the increasing filtrations on $P$ and $T$ are exhaustive and bounded below. Given a perturbation $\varrho$ of the differential $\delta$ on $P$ satisfying $\varrho(\mathcal{F}^{n}P) \subseteq \mathcal{F}^{n-1}P$, one obtains a new filtered contraction
\[
	\begin{tikzcd}
	(P, \delta  +\varrho) \arrow[loop left, distance=2em, start anchor={[yshift=-1ex]west}, end anchor={[yshift=1ex]west}]{}{h_\flat } \arrow[r,yshift = 0.7ex, "\phi_\flat"] & (T ,  d+\vartheta), \arrow[l,yshift = -0.7ex, "\psi_\flat"]
	\end{tikzcd}
\]
where $\vartheta, \phi_\flat, \psi_\flat, h_\flat$ are defined in~\eqref{Eq: Def of thetaandphi} and~\eqref{Eq: Def of psiandh}.
\end{lemma}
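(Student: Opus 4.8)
The plan is to reduce this to the unfiltered perturbation Lemma~\ref{Lem: OPT}: the only thing to verify is that the filtration hypotheses force the three convergence constraints in~\eqref{Eq: perturb constraints}, and that the resulting contraction data again respect the filtrations.

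First I would record the elementary observation that drives everything. Since $\varrho$ lowers the filtration by one, $\varrho(\mathcal{F}^n P) \subseteq \mathcal{F}^{n-1}P$, whereas $h$, $\phi$ and $\psi$ preserve it by hypothesis. Consequently both composites $h\varrho$ and $\varrho h$ lower the filtration by at least one, and by iteration $(h\varrho)^n(\mathcal{F}^k P) \subseteq \mathcal{F}^{k-n}P$ and $(\varrho h)^n(\mathcal{F}^k P) \subseteq \mathcal{F}^{k-n}P$ for all $n, k$.

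Next I would turn this into local nilpotency using exhaustiveness and boundedness from below. Fix an integer $m$ with $\mathcal{F}^j P = 0$ for all $j \leq m$. Given $x \in T$, exhaustiveness gives $\psi(x) \in \mathcal{F}^k P$ for some $k$, so $(h\varrho)^n\psi(x) \in \mathcal{F}^{k-n}P = 0$ as soon as $n > k - m$; hence $x \in \ker((h\varrho)^n\psi)$ for large $n$, which proves $\cup_n \ker((h\varrho)^n\psi) = T$. The identical argument applied to an arbitrary $y \in \mathcal{F}^k P$ shows $(\varrho h)^n y = 0$ and $h(\varrho h)^n y = 0$ once $n > k - m$, yielding the remaining two constraints $\cup_n \ker(\phi(\varrho h)^n) = P$ and $\cup_n \ker(h(\varrho h)^n) = P$. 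With the constraints in hand, the series in~\eqref{Eq: Def of thetaandphi} and~\eqref{Eq: Def of psiandh} are locally finite---on any fixed $\mathcal{F}^k P$ only finitely many terms survive---so they converge, and Lemma~\ref{Lem: OPT} delivers the contraction onto $(T, d+\vartheta)$.

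Finally I would observe that the new contraction is again filtered: each summand $\phi(\varrho h)^k$, $(h\varrho)^k\psi$, $h(\varrho h)^k$ is a composite of filtration-preserving maps with the filtration-lowering $\varrho$, hence preserves the filtration, so $\phi_\flat$, $\psi_\flat$ and $h_\flat$ do as well, and $\vartheta$ is well defined by the same bookkeeping. There is no genuinely hard step here; the only point requiring care is the argument guaranteeing local finiteness of the perturbation series, and this is precisely where boundedness from below of the filtration is indispensable---without it the sums need not terminate on a given element.
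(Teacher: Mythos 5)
Your argument is correct and follows exactly the route the paper takes: it observes that since $\varrho$ lowers the filtration while $\phi,\psi,h$ preserve it, boundedness from below forces local nilpotency of $h\varrho$ and $\varrho h$, so the three constraints of Lemma~\ref{Lem: OPT} hold and the perturbation lemma applies; the paper states this verification as ``clear'' and you have simply spelled it out. The closing check that $\phi_\flat,\psi_\flat,h_\flat$ remain filtered is the same bookkeeping the paper implicitly relies on.
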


Let $\mathcal{R}$ be a commutative dg algebra. There is a standard construction, called \emph{tensor trick}, on tensor products of contractions of dg $\mathcal{R}$-modules:
\begin{lemma}\label{Lem: Tensor trick}
Given a contraction of dg $\mathcal{R}$-modules
  \[
\begin{tikzcd}
M \arrow[loop left, distance=2em, start anchor={[yshift=-1ex]west}, end anchor={[yshift=1ex]west}]{}{h } \arrow[r,yshift = 0.7ex, "\phi"] & N \arrow[l,yshift = -0.7ex, "\psi"],
\end{tikzcd}
\]
there exists a new contraction on the corresponding reduced tensor (co)algebras
\[
\begin{tikzcd}
T(M) \arrow[loop left, distance=2em, start anchor={[yshift=-1ex]west}, end anchor={[yshift=1ex]west}]{}{Th} \arrow[r,yshift = 0.7ex, "T\phi"] & T(N) \arrow[l,yshift = -0.7ex, "T\psi"],
\end{tikzcd}
\]
where $T(M) = \oplus_{n\geq 1}\otimes_{\mathcal{R}}^nM$ and $T(N) = \oplus_{n\geq 1}\otimes_{\mathcal{R}}^n N$ are reduced tensor (co)algebras of $M$ and $N$, respectively, and
\begin{align*}
  T\phi &= \sum_{n\geq 1} \phi^{\otimes_{\mathcal{R}} n}, & T\psi &= \sum_{n\geq 1}\psi^{\otimes_{\mathcal{R}} n}, & Th &= \sum_n T^n h = \sum_n\sum_{i=1}^{n}(\psi \phi)^{\otimes_{\mathcal{R}}(i-1)} \otimes_{\mathcal{R}} h \otimes_{\mathcal{R}} \id_M^{\otimes_{\mathcal{R}} (n-i)}.
\end{align*}
\end{lemma}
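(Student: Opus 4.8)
The plan is to verify directly the three defining relations of a contraction for the triple $(T\phi, T\psi, Th)$: the retraction identity $T\phi\circ T\psi = \id_{T(N)}$, the homotopy identity $T\psi\circ T\phi = \id_{T(M)} + (Th)\partial + \partial(Th)$, and the three side conditions. Because $T(M)=\oplus_{n\ge 1}\otimes^n_{\mathcal{R}}M$ and $T(N)=\oplus_{n\ge1}\otimes^n_{\mathcal{R}}N$ are direct sums and each of $T\phi,T\psi,Th$ is assembled from its tensor-degree-$n$ piece $\phi^{\otimes n},\psi^{\otimes n},T^nh$, there is nothing to check about convergence: every claimed identity restricts to an identity of operators on the single graded $\mathcal{R}$-module $\otimes^n_{\mathcal{R}}M$ (resp.\ $\otimes^n_{\mathcal{R}}N$), and I may argue one $n$ at a time. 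Throughout I use that $\phi,\psi,h$ are $\mathcal{R}$-linear (so their tensor powers over $\mathcal{R}$ are well defined), I write $d$ for the single-factor module differential, and I write $\partial=\sum_j\partial_j$ for the induced tensor-product differential on $\otimes^n_{\mathcal{R}}M$, which acts as a graded derivation with $\partial_j$ the differential inserted in the $j$-th slot.

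First I would dispose of the easy relations. Since $\phi,\psi$ are cochain maps, so are $\phi^{\otimes n}$ and $\psi^{\otimes n}$, hence $T\phi$ and $T\psi$ are cochain maps; and the retraction identity is immediate factor by factor, $T\phi\circ T\psi=\sum_n(\phi\psi)^{\otimes n}=\sum_n\id^{\otimes n}=\id_{T(N)}$, using $\phi\psi=\id_N$. The core of the argument is the homotopy identity. Writing $e:=\psi\phi$, which commutes with $d$ as a composition of cochain maps and satisfies $e-\id=dh+hd=[d,h]$, I would expand telescopically on $\otimes^n_{\mathcal{R}}M$:
\[
 e^{\otimes n}-\id^{\otimes n}=\sum_{i=1}^n e^{\otimes(i-1)}\otimes(e-\id)\otimes\id^{\otimes(n-i)}=\sum_{i=1}^n e^{\otimes(i-1)}\otimes[d,h]\otimes\id^{\otimes(n-i)}.
\]
Setting $P_i:=e^{\otimes(i-1)}\otimes h\otimes\id^{\otimes(n-i)}$ so that $T^nh=\sum_{i=1}^nP_i$, and using that $\partial$ is a graded derivation of the algebra of tensor-product operators together with $[d,e]=0$ and $[d,\id]=0$, the graded commutator of $\partial$ with $P_i$ picks out only the slot carrying $h$:
\[
 [\partial,P_i]=e^{\otimes(i-1)}\otimes[d,h]\otimes\id^{\otimes(n-i)}.
\]
Summing over $i$ gives $[\partial,T^nh]=e^{\otimes n}-\id^{\otimes n}=\psi^{\otimes n}\phi^{\otimes n}-\id^{\otimes n}$, which is precisely the homotopy identity $T\psi\circ T\phi-\id=(Th)\partial+\partial(Th)$ in tensor-degree $n$ (here $|T^nh|=-1$ makes the graded commutator equal the anticommutator).

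Finally I would read off the side conditions slot by slot from the single-factor ones $\phi h=0$, $h\psi=0$, $h^2=0$. In $T\phi\circ T^nh=\sum_i\phi^{\otimes n}\circ P_i$ the $i$-th slot contributes $\phi\circ h=0$, giving $T\phi\circ Th=0$; in $T^nh\circ T\psi=\sum_iP_i\circ\psi^{\otimes n}$ the $i$-th slot contributes $h\circ\psi=0$, giving $Th\circ T\psi=0$; and in $(T^nh)^2=\sum_{i,k}P_iP_k$ the three cases $i<k$, $i>k$, $i=k$ vanish through $h\circ e=(h\psi)\phi=0$, $e\circ h=\psi(\phi h)=0$, and $h^2=0$ respectively, giving $(Th)^2=0$. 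The main obstacle --- and essentially the only nontrivial point --- is the Koszul sign bookkeeping in the step $[\partial,P_i]=e^{\otimes(i-1)}\otimes[d,h]\otimes\id^{\otimes(n-i)}$: one must confirm that commuting $\partial_j$ past the factors to the left of the $h$-slot yields the stated sign, that the contributions $[\partial_j,P_i]$ for $j\ne i$ vanish identically (not merely cancel in pairs), and that the resulting $[\partial,T^nh]$ matches $(Th)\partial+\partial(Th)$ with the correct overall sign. I expect every sign to reduce to $+$ precisely because all factors to the left of $h$ in $P_i$ are the \emph{degree-$0$} operator $e=\psi\phi$; this is the structural reason the formula for $Th$ places $(\psi\phi)^{\otimes(i-1)}$ on the left and $\id^{\otimes(n-i)}$ on the right rather than using a symmetric expression.
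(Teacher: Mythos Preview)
Your proof is correct and complete. The paper itself omits the proof entirely, stating only that it is ``a straightforward adaptation of Manetti's argument'' for the case where $\mathcal{R}$ is an ordinary commutative algebra; your direct verification via the telescoping identity $e^{\otimes n}-\id^{\otimes n}=\sum_i e^{\otimes(i-1)}\otimes(e-\id)\otimes\id^{\otimes(n-i)}$ together with the slot-by-slot analysis of $[\partial,P_i]$ and the side conditions is precisely that standard argument, and your sign analysis (in particular the observation that the degree-$0$ factors $e$ to the left of $h$ are what make the Koszul signs trivial) is accurate.
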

The proof is a straightforward adaptation of Manetti's argument in~\cite{Manetti}, where $\mathcal{R}$ is an ordinary commutative algebra, and thus is omitted. See also~\cite{Berglund}.

\begin{bibdiv}
  \begin{biblist}
 \bib{AC}{article}{
   author={Abad, Camilo Arias},
   author={Crainic, Marius},
   title={Representations up to homotopy of Lie algebroids},
   journal={J. Reine Angew. Math.},
   volume={663},
   date={2012},
   pages={91--126},
}

\bib{MR1432574}{article}{
   author={Alexandrov, Mikhail},
   author={Schwarz, Albert},
   author={Zaboronsky, Oleg},
   author={Kontsevich, Maxim},
   title={The geometry of the master equation and topological quantum field
   theory},
   journal={Internat. J. Modern Phys. A},
   volume={12},
   date={1997},
   number={7},
   pages={1405--1429},
}

\bib{BSX}{article}{
   author={Bandiera, Ruggero},
   author={Sti\'{e}non, Mathieu},
   author={Xu, Ping},
   title={Polyvector fields and polydifferential operators associated with
   Lie pairs},
   journal={J. Noncommut. Geom.},
   volume={15},
   date={2021},
   number={2},
   pages={643--711},
   issn={1661-6952},
}

\bib{BCSX}{article}{
   author={Bandiera, Ruggero},
   author={Chen, Zhuo},
   author={Sti\'{e}non, Mathieu},
   author={Xu, Ping},
   title={Shifted Derived Poisson Manifolds Associated with Lie Pairs},
   journal={Comm. Math. Phys.},
   volume={375},
   date={2020},
   number={3},
   pages={1717--1760},
}

\bib{Berglund}{article}{
   author={Berglund, Alexander},
   title={Homological perturbation theory for algebras over operads},
   journal={Algebr. Geom. Topol.},
   volume={14},
   date={2014},
   number={5},
   pages={2511--2548},
   issn={1472-2747},
}

\bib{Brown}{article}{
   author={Brown, Ronald},
   title={The twisted Eilenberg-Zilber theorem},
   conference={
      title={Simposio di Topologia},
      address={Messina},
      date={1964},
   },
   book={
      publisher={Edizioni ``Oderisi'', Gubbio},
   },
   date={1965},
   pages={33--37},
}

\bib{CV}{article}{
   author={Calaque, Damien},
   author={Van den Bergh, Michel},
   title={Hochschild cohomology and Atiyah classes},
   journal={Adv. Math.},
   volume={224},
   date={2010},
   number={5},
   pages={1839--1889},
   issn={0001-8708},
}

\bib{Caldararu}{article}{
   author={C\u{a}ld\u{a}raru, Andrei},
   title={The Mukai pairing. II. The Hochschild-Kostant-Rosenberg
   isomorphism},
   journal={Adv. Math.},
   volume={194},
   date={2005},
   number={1},
   pages={34--66},
   issn={0001-8708},
}

\bib{CaTu}{article}{
   author={C\u{a}ld\u{a}raru, Andrei},
   author={Tu, Junwu},
   title={Curved A-infinity  algebras and Landau-Ginzburg models},
   journal={New York J. Math},
   volume={19},
   date={2013},
   number={1},
   pages={305--342},
}

\bib{CF}{article}{
   author={Cattaneo, Alberto S.},
   author={Felder, Giovanni},
   title={Relative formality theorem and quantisation of coisotropic
   submanifolds},
   journal={Adv. Math.},
   volume={208},
   date={2007},
   number={2},
   pages={521--548},
   issn={0001-8708},
}

\bib{CFL}{article}{
   author={Cattaneo, Alberto S.},
   author={Fiorenza, Domenico},
   author={Longoni, Riccardo},
   title={On the Hochschild-Kostant-Rosenberg map for graded manifolds},
   journal={Int. Math. Res. Not.},
   date={2005},
   number={62},
   pages={3899--3918},
   issn={1073-7928},
}

\bib{Chen}{article}{
   author={Chen, Kuo-Tsai},
   title={Iterated Integrals of Differential Forms and Loop},
   journal={ Ann. of Math.},
   date={1973},
   volume={97},
   pages={217--246},
}

\bib{CCN}{article}{
   author={Cheng, Jiahao},
   author={Chen, Zhuo},
   author={Ni, Dadi},
   title={Hopf algebras arising from dg manifolds},
   journal={J. Algebra},
   volume={584},
   date={2021},
   pages={19--68},
   issn={0021-8693},
}

\bib{C-S-X}{article}{
	author={Chen, Zhuo},
	author={Sti\'{e}non, Mathieu},
	author={Xu, Ping},
	title={A Hopf algebra associated with a Lie pair},
	language={English, with English and French summaries},
	journal={C. R. Math. Acad. Sci. Paris},
	volume={352},
	date={2014},
	number={11},
	pages={929--933},
}

 \bib{CSX}{article}{
   author={Chen, Zhuo},
   author={Sti{\'e}non, Mathieu},
   author={Xu, Ping},
   title={From Atiyah classes to homotopy Leibniz algebras},
   journal={Comm. Math. Phys.},
   volume={341},
   date={2016},
   number={1},
   pages={309--349},
}

\bib{CXX}{article}{
   author={Chen, Zhuo},
   author={Xiang, Maosong},
   author={Xu, Ping},
   title={Atiyah and Todd classes arising from integrable distributions},
   journal={J. Geom. Phys.},
   volume={136},
   date={2019},
   pages={52--67},
   issn={0393-0440},
}

\bib{EM}{article}{
   author={Eilenberg, Samuel},
   author={Mac Lane, Saunders},
   title={On the groups $H(\Pi,n)$. I},
   journal={Ann. of Math. (2)},
   volume={58},
   date={1953},
   pages={55--106},
   issn={0003-486X},
}

\bib{GJP}{article}{
   author={Getzler, Ezra},
   author={Jones, John D. S.},
   author={Petrack, Scott},
   title={Differential forms on loop spaces and the cyclic bar complex},
   journal={Topology},
   volume={30},
   date={1991},
   number={3},
   pages={339--371},
   issn={0040-9383},
}

\bib{GSM}{article}{
   author={Gracia-Saz, Alfonso},
   author={Mehta, Rajan Amit},
   title={Lie algebroid structures on double vector bundles and
   representation theory of Lie algebroids},
   journal={Adv. Math.},
   volume={223},
   date={2010},
   number={4},
   pages={1236--1275},
   issn={0001-8708},
}

\bib{HK}{article}{
   author={Huebschmann, Johannes},
   author={Kadeishvili, Tornike},
   title={Small models for chain algebras},
   journal={Math. Z.},
   volume={207},
   date={1991},
   number={2},
   pages={245--280},
   issn={0025-5874},
}

\bib{Kaledin}{article}{
   author={Kaledin, Dmitry},
   title={Some remarks on formality in families},
   language={English, with English and Russian summaries},
   journal={Mosc. Math. J.},
   volume={7},
   date={2007},
   number={4},
   pages={643--652, 766},
   issn={1609-3321},
}

\bib{Keller}{article}{
author={Keller, Bernhard},
title={Derived invariance of higher structures on the Hochschild complex},
year={2003},
eprint={https://webusers.imj-prg.fr/~bernhard.keller/publ/dih.pdf},
}

\bib{KL}{article}{
   author={Keller, Bernhard},
   author={Lowen, Wendy},
   title={On Hochschild cohomology and Morita deformations},
   journal={Int. Math. Res. Not. IMRN},
   date={2009},
   number={17},
   pages={3221--3235},
   issn={1073-7928},
}

 \bib{Kon}{article}{
   author={Kontsevich, Maxim},
   title={Deformation quantization of Poisson manifolds},
   journal={Lett. Math. Phys.},
   volume={66},
   date={2003},
   number={3},
   pages={157--216},
}

\bib{LGSX}{article}{
   author={Laurent-Gengoux, Camille},
   author={Sti\'{e}non, Mathieu},
   author={Xu, Ping},
   title={Poincar\'{e}-Birkhoff-Witt isomorphisms and Kapranov dg-manifolds},
   journal={Adv. Math.},
   volume={387},
   date={2021},
   pages={Paper No. 107792, 62},
   issn={0001-8708},
}

\bib{LS}{article}{
   author={Liao, Hsuan-Yi},
   author={Sti\'{e}non, Mathieu},
   title={Formal exponential map for graded manifolds},
   journal={Int. Math. Res. Not. IMRN},
   date={2019},
   number={3},
   pages={700--730},
   issn={1073-7928},
}

  \bib{LSX}{article}{
   author={Liao, Hsuan-Yi},
   author={Sti\'enon, Mathieu},
   author={Xu, Ping},
   title={Formality theorem for differential graded manifolds},
   journal={C. R. Math. Acad. Sci. Paris},
   volume={356},
   date={2018},
   number={1},
   pages={27--43},
   issn={1631-073X},
}

 \bib{LSXpair}{article}{
   author={Liao, Hsuan-Yi},
   author={Sti\'{e}non, Mathieu},
   author={Xu, Ping},
   title={Formality and Kontsevich-Duflo type theorems for Lie pairs},
   journal={Adv. Math.},
   volume={352},
   date={2019},
   pages={406--482},
   issn={0001-8708},
}

\bib{Lunts}{article}{
   author={Lunts, Valery A.},
   title={Formality of DG algebras (after Kaledin)},
   journal={J. Algebra},
   volume={323},
   date={2010},
   number={4},
   pages={878--898},
   issn={0021-8693},
}

\bib{LMS}{article}{
   author={Lyakhovich, Simon L.},
   author={Mosman, Elena A.},
   author={Sharapov, Alexey A.},
   title={Characteristic classes of $Q$-manifolds: classification and
   applications},
   journal={J. Geom. Phys.},
   volume={60},
   date={2010},
   number={5},
   pages={729--759},
   issn={0393-0440},
}

\bib{Mac}{article}{
   author={Mackenzie, Kirill},
   title={Double Lie algebroids and second-order geometry. I},
   journal={Adv. Math.},
   volume={94},
   date={1992},
   number={2},
   pages={180--239},
   issn={0001-8708},
}

\bib{Macbook}{book}{
   author={Mackenzie, Kirill},
   title={General theory of Lie groupoids and Lie algebroids},
   series={London Mathematical Society Lecture Note Series},
   volume={213},
   publisher={Cambridge University Press, Cambridge},
   date={2005},
   pages={xxxviii+501},
   isbn={978-0-521-49928-3},
   isbn={0-521-49928-3},
}

\bib{MM}{article}{
   author={Mackenzie, Kirill},
   author={Mokri, Tahar},
   title={Locally vacant double Lie groupoids and the integration of matched
   pairs of Lie algebroids},
   journal={Geom. Dedicata},
   volume={77},
   date={1999},
   number={3},
   pages={317--330},
   issn={0046-5755},
}

\bib{Manetti}{article}{
   author={Manetti, Marco},
   title={A relative version of the ordinary perturbation lemma},
   journal={Rend. Mat. Appl. (7)},
   volume={30},
   date={2010},
   number={2},
   pages={221--238},
   issn={1120-7183},
}

\bib{Manettibook}{book}{
  author={Manetti, Marco},
  title={Lie methods in deformation theory},
  series={Springer Monographs in Mathematics},
  publisher={Springer, Singapore},
  date={2022},
  pages={XII, 575},
  isbn={978-981-19-1184-2},
}

\bib{Markarian}{article}{
   author={Markarian, Nikita},
   title={The Atiyah class, Hochschild cohomology and the Riemann-Roch
   theorem},
   journal={J. Lond. Math. Soc. (2)},
   volume={79},
   date={2009},
   number={1},
   pages={129--143},
   issn={0024-6107},
}

\bib{Mehta}{article}{
   author={Mehta, Rajan Amit},
   title={Lie algebroid modules and representations up to homotopy},
   journal={Indag. Math. (N.S.)},
   volume={25},
   date={2014},
   number={5},
   pages={1122--1134},
   issn={0019-3577},
}

\bib{MSX}{article}{
   author={Mehta, Rajan Amit},
   author={Sti{\'e}non, Mathieu},
   author={Xu, Ping},
   title={The Atiyah class of a dg-vector bundle},
   language={English, with English and French summaries},
   journal={C. R. Math. Acad. Sci. Paris},
   volume={353},
   date={2015},
   number={4},
   pages={357--362},
   issn={1631-073X},
}

\bib{Mokri}{article}{
   author={Mokri, Tahar},
   title={Matched pairs of Lie algebroids},
   journal={Glasgow Math. J.},
   volume={39},
   date={1997},
   number={2},
   pages={167--181},
   issn={0017-0895},
}

\bib{Ramadoss}{article}{
   author={Ramadoss, Ajay C.},
   title={The big Chern classes and the Chern character},
   journal={Internat. J. Math.},
   volume={19},
   date={2008},
   number={6},
   pages={699--746},
   issn={0129-167X},
}

\bib{RWang}{article}{
   author={Rivera, Manuel},
author={Wang, Zhengfang},
   title={Singular Hochschild cohomology and algebraic string operations},
   journal={Journal of Noncommutative Geometry},
   volume={13},
   date={2019},
   number={1},
   pages={297--361},
}

\bib{MR1230027}{article}{
   author={Schwarz, Albert},
   title={Geometry of Batalin-Vilkovisky quantization},
   journal={Comm. Math. Phys.},
   volume={155},
   date={1993},
   number={2},
   pages={249--260},
}

\bib{SSX}{article}{
   author={Seol, Seokbong},
   author={Sti\'{e}non, Mathieu},
   author={Xu, Ping},
   title={Dg manifolds, formal exponential maps and homotopy Lie algebras},
   journal={Comm. Math. Phys.},
   volume={391},
   date={2022},
   number={1},
   pages={33--76},
   issn={0010-3616},
}

\bib{SVX}{article}{
   author={Sti\'{e}non, Mathieu},
   author={Vitagliano, Luca},
   author={Xu, Ping},
   title={$A_\infty$-algebras from Lie pairs},
   note={preprint},
   year={2022},
}

\bib{SX}{article}{
   author={Sti\'{e}non, Mathieu},
   author={Xu, Ping},
   title={Fedosov dg manifolds associated with Lie pairs},
   journal={Math. Ann.},
   volume={378},
   date={2020},
   number={1-2},
   pages={729--762},
   issn={0025-5831},
}

\bib{SXsurvey}{article}{
   author={Sti\'{e}non, Mathieu},
   author={Xu, Ping},
   title={Atiyah classes and Kontsevich-Duflo type theorem for DG manifolds},
   conference={
      title={Homotopy algebras, deformation theory and quantization},
   },
   book={
      series={Banach Center Publ.},
      volume={123},
      publisher={Polish Acad. Sci. Inst. Math., Warsaw},
   },
   date={2021},
   pages={63--110},
}

\bib{Tsygan}{article}{
   author={Tsygan, Boris},
   title={Noncommutative calculus and operads},
   conference={
      title={Topics in noncommutative geometry},
   },
   book={
      series={Clay Math. Proc.},
      volume={16},
      publisher={Amer. Math. Soc., Providence, RI},
   },
   date={2012},
   pages={19--66},
}

\bib{Vaintrob}{article}{
   author={Va\u{\i}ntrob, Arkady},
   title={Lie algebroids and homological vector fields},
   language={Russian},
   journal={Uspekhi Mat. Nauk},
   volume={52},
   date={1997},
   number={2(314)},
   pages={161--162},
   issn={0042-1316},
   translation={
      journal={Russian Math. Surveys},
      volume={52},
      date={1997},
      number={2},
      pages={428--429},
      issn={0036-0279},
   },
}

\bib{Luca}{article}{
   author={Vitagliano, Luca},
   title={On the strong homotopy associative algebra of a foliation},
   journal={Commun. Contemp. Math.},
   volume={17},
   date={2015},
   number={2},
   pages={1450026, 34},
   issn={0219-1997},
}

\bib{Voronov}{article}{
   author={Voronov, Theodore Th.},
   title={$Q$-manifolds and Mackenzie theory},
   journal={Comm. Math. Phys.},
   volume={315},
   date={2012},
   number={2},
   pages={279--310},
   issn={0010-3616},
}

\bib{Xu}{article}{
   author={Xu, Ping},
   title={Quantum groupoids},
   journal={Comm. Math. Phys.},
   volume={216},
   date={2001},
   number={3},
   pages={539--581},
   issn={0010-3616},
}

\bib{Yekutieli}{article}{
   author={Yekutieli, Amnon},
   title={The continuous Hochschild cochain complex of a scheme},
   journal={Canad. J. Math.},
   volume={54},
   date={2002},
   number={6},
   pages={1319--1337},
   issn={0008-414X},
}

   \end{biblist}
\end{bibdiv}

\end{document}